      \def\@setcopyright{}
      \def\serieslogo@{}
\newcommand{\Complex}{\mathbb C}
\newcommand{\Real}{\mathbb R}
\newcommand{\R}{\mathbb R}
\newcommand{\N}{\mathbb N}
\newcommand{\ddbar}{\overline\partial}
\newcommand{\pr}{\partial}
\newcommand{\ol}{\overline}
\newcommand{\Td}{\widetilde}
\newcommand{\norm}[1]{\left\Vert#1\right\Vert}
\newcommand{\abs}[1]{\left\vert#1\right\vert}
\newcommand{\set}[1]{\left\{#1\right\}}
\newcommand{\To}{\rightarrow}
\theoremstyle{plain}
\newtheorem{thm}{Theorem}[section]
\newtheorem{cor}[thm]{Corollary}
\newtheorem{lem}[thm]{Lemma}
\theoremstyle{definition}
\newtheorem{defn}[thm]{Definition}
\theoremstyle{remark}
\newtheorem{ex}[thm]{Example}
\numberwithin{equation}{section}
\begin{document}
\title[Equivariant Kodaira embedding for CR manifolds with circle action]
{Equivariant Kodaira embedding for CR manifolds with circle action}
\author[Chin-Yu Hsiao]{Chin-Yu Hsiao}
\address{Institute of Mathematics, Academia Sinica and National Center for Theoretical Sciences, Astronomy-Mathematics Building, No. 1, Sec. 4, Roosevelt Road, Taipei 10617, Taiwan}
\thanks{The first author was partially supported by Taiwan Ministry of Science of Technology project 104-2628-M-001-003-MY2 and the Golden-Jade fellowship of Kenda Foundation}
\email{chsiao@math.sinica.edu.tw or chinyu.hsiao@gmail.com}
\author[Xiaoshan Li]{Xiaoshan Li}
\address{School of Mathematics
and Statistics, Wuhan University, Hubei 430072, China \& Institute of Mathematics, Academia Sinica, 6F, Astronomy-Mathematics Building,
No.1, Sec.4, Roosevelt Road, Taipei 10617, Taiwan}
\thanks{The second author was  supported by Central university research Fund 2042015kf0049, Postdoctoral Science Foundation of China 2015M570660 and NSFC No. 11501422}
\email{xiaoshanli@whu.edu.cn or xiaoshanli@math.sinica.edu.tw}

\author[George Marinescu]{George Marinescu}
\address{Universit{\"a}t zu K{\"o}ln,  Mathematisches Institut,
    Weyertal 86-90,   50931 K{\"o}ln, Germany\\
    \& Institute of Mathematics `Simion Stoilow', Romanian Academy,
Bucharest, Romania}
\thanks{The third author gratefully acknowledges the support of the Academia Sinica at Taipei,
where part of this paper was written.}
\email{gmarines@math.uni-koeln.de}

\begin{abstract}
We consider  a compact CR manifold with a transversal CR locally free circle action
endowed with  a rigid positive CR line bundle.
We prove that a certain weighted Fourier-Szeg\H{o} kernel of the CR sections
in the high tensor powers admits a full asymptotic expansion.
As a consequence, we establish an equivariant Kodaira embedding theorem.
\end{abstract}

\maketitle \tableofcontents

\section{Introduction and statement of the main results} \label{s-gue150806}

The goal of this paper is to study the equivariant embedding of CR manifolds
with circle action. The embedding of CR manifolds in general is a subject with
long tradition. One paradigm is the embedding theorem of compact strictly pseudoconvex
CR manifolds. A famous theorem of Louis Boutet de Monvel~\cite{BdM1:74b}
asserts that such manifolds can be embedded
by CR maps into the complex Euclidean space, provided the dimension of the manifold is
greater than or equal to five.

In dimension three there are non-embeddable
compact strictly pseudoconvex CR manifolds (see e.\,g.\ Burns \cite{Bu:77}, where the boundary of
the non-fillable example of strictly pseudoconcave manifold by Grauert \cite{G94}, 
Andreotti-Siu \cite{AS70} and Rossi \cite{Ro65}
is shown to be non-embeddable).
However, if the manifold admits a circle action, then it is embeddable, by a theorem
of Lempert \cite{Lem92}.
In the study of CR functions, which would eventually provide an embedding,
it is natural to look to the orthogonal projector on the space of square integrable
CR functions, called Szeg\H{o} projector. The Schwartz kernel of this projector
is called Szeg\H{o} kernel.
In this spirit,  a proof based on the Szeg\H{o} kernel of Lempert's embedding theorem was given
in \cite{HM14}. Using the Szeg\H{o} kernel of the Fourier components it was recently
shown in \cite[Theorem 1.2]{HHL15} that there exists an \emph{equivariant} embedding
of strictly pseudoconvex CR manifolds with circle action.

Leaving the territory of strictly pseudoconvex CR manifolds, the natural idea arrises
to embed CR manifolds into the projective space by means of CR sections of a
CR line bundle of positive curvature \cite{Ge89,HHL15,Hsiao12,Hsiao14,HL15,HM09,HM14,HM15,Ma96,OS00}.
This is the analogue of the \emph{Kodaira embedding theorem} from complex geometry.
In the case of CR manifolds we have to use an analytic method, while Kodaira's
original proof relied on cohomology vanishing theorems.
Analytic proofs of the Kodaira embedding theorem for K\"ahler and symplectic manifolds,
based on the Bergman kernel asymptotics,
were given in \cite{BU,MM07,SZ02,Zel98}. 
In this paper we will use Szeg\H{o} kernel analogues on CR manifolds
of the Bergman kernel asymptotics on K\"ahler or symplectic manifolds
\cite{Cat:97,HM12,MM07,MM08,SZ02,Zel98}.
A motivating example is the quadric
\[
\big\{[z]\in\Complex\mathbb{P}^{N-1};
\, |z_1|^2+\ldots+|z_q|^2-|z_{q+1}|^2-\ldots-|z_N|^2=0\big\}
\]
which is a CR manifold possessing a positive line bundle and a circle action.
In \cite{BU,SZ02,Zel98} the Szeg\H{o} kernel on a strictly
pseudoconvex CR manifold with trivial line bundle \cite{BouSj76} (see also \cite{HM14})
was used to study the Bergman kernel
on a K\"ahler manifold, while here we study the Szeg\H{o} kernel
for tensor powers of a CR line bundle. 

We are thus led to the problem of \emph{equivariant Kodaira embedding}
of CR manifolds with circle action, which will be the subject of this paper.
We will prove that a certain weighted Fourier-Szeg\H{o} kernel
admits a full asymptotic expansion and by using these asymptotics, we will show that
if $X$ admits a transversal CR locally free $S^1$-action and there is a rigid positive
CR line bundle $L$ over $X$, then $X$ can be CR embedded into projective space
\emph{without any assumption} of the Levi form.
In particular, when $X$ is Levi-flat, we improve to $C^\infty$
the regularity in the Kodaira embedding theorem of Ohsawa and Sibony (see Corollary \ref{C:Lf}). 


Let us now formulate our main results. We refer to Section~\ref{s:prelim} for some standard
notations and terminology used here. Let $(X,T^{1,0}X)$ be a compact CR manifold
of dimension $2n-1$, $n\geqslant2$, endowed with a locally free $S^1$-action
$S^1\times X\to X$, $(e^{i\theta},x)\mapsto e^{i\theta}  x$ and we let $T$
be the infinitesimal generator of the $S^1$-action. 

We assume that this $S^1$-action is transversal CR, that is, $T$ preserves the CR structure $T^{1,0}X$,
and $T$ and $T^{1,0}X\oplus\overline{T^{1,0}X}$ generate the complex tangent bundle to $X$.
In our paper we will make systematic use of appropriate coordinates introduced by
Baouendi-Rothschild-Treves~\cite{BRT85}.
Namely, if $X$ admits a transversal CR locally free $S^1$-action, then for each point $p\in X$
there exist a coordinate neighborhood $U$ with coordinates $(x_1,\ldots,x_{2n-1})$,
centered at $p=0$, and $\eta>0$, $\varepsilon_0>0$,
such that, by setting $z_j=x_{2j-1}+ix_{2j}$, $j=1,\ldots,n-1$, $x_{2n-1}=\theta$
and $D=\{(z, \theta)\in U: \abs{z}<\eta, |\theta|<\varepsilon_0\}\subset U$, we have
\begin{equation}\label{e-can1}
T=\frac{\partial}{\partial\theta}\:\:\text{on $D$},\\
\end{equation}
and the vector fields
\begin{equation}\label{e-can2}
Z_j=\frac{\partial}{\partial z_j}-i\frac{\partial\phi}{\partial z_j}(z)\frac{\partial}{\partial\theta},
\:\:j=1,\ldots,n-1,
\end{equation}
form a basis of $T_x^{1,0}X$ for each $x\in D$, where $\phi\in C^\infty(D,\mathbb R)$
is independent of $\theta$.
We call $(x_1,\ldots,x_{2n-1})$ canonical coordinates, $D$ canonical coordinate patch and
$(D,(z,\theta),\phi)$ a BRT trivialization.


A vector bundle is called rigid (resp.\ rigid CR) vector bundle if there is a
family of trivializations which cover $X$ such that the entries of the transition
matrices are functions annihilated by $T$ (resp.\ CR functions annihilated by $T$),
see Definition~\ref{d-gue150508dI}. The corresponding frames are called rigid frames. 
A Hermitian metric on a rigid vector bundle is called rigid if for every rigid frame 
$\set{f_1,\ldots,f_r}$, the inner products
of $f_j$ and $f_\ell$  are annihilated by $T$ for any $j,\ell$.

Let $L$ be a rigid CR line bundle over $X$ and let $L^k$ be the $k$-th power of $L$.
Let
\[\ddbar_b:\Omega^{0,q}(X,L^k)\To\Omega^{0,q+1}(X,L^k),\]
be the tangential Cauchy-Riemann operator with values in $L^k$.
For every $m\in\mathbb Z$, put
\begin{equation}\label{e-gue150806I}
C^\infty_m(X,L^k):=\set{u\in C^\infty(X,L^k);\, Tu=imu},
\end{equation}
and
\begin{equation}\label{e-gue150806II}
\mathcal{H}^0_{b,m}(X,L^k):=\set{u\in C^\infty_m(X,L^k);\, \ddbar_bu=0}.
\end{equation}
Since $X$ is a compact manifold we have
for every $m\in\mathbb Z$ (see \cite[Theorem 1.23]{HL15} and also
Theorem \ref{t-gue150517aI})
\begin{equation}\label{e-gue150806III}
{\rm dim\,}\mathcal{H}^{0}_{b,m}(X,L^k)<\infty.
\end{equation}
For $\lambda>0$, put
\begin{equation}\label{e-gue150806IV}
\mathcal{H}^0_{b,\leq\lambda}(X,L^k):=\bigoplus_{\abs{m}\leq\lambda}\mathcal{H}^0_{b,m}(X,L^k).
\end{equation}

We assume further that $L$ is endowed with a rigid Hermitian metric $h$.
The curvature of $(L,h)$ at a point $x\in X$ is denoted by $R^L_x$ (cf.\ Definition \ref{d-gue150808g}), and
$(L,h)$ is called positive if $R^L_x$ is positive definite at any point $x\in X$.
The Hermitian metric on $L^k$ induced by $h$ is denoted by $h^k$.

The bundle $\Complex TX$ is rigid and we can take a rigid Hermitian metric
$\langle\,\cdot\,|\,\cdot\,\rangle$ on $\Complex TX$
such that
$$T^{1,0}X\perp T^{0,1}X,\:\: T\perp (T^{1,0}X\oplus T^{0,1}X),\:\:
\langle\,T\,|\,T\,\rangle=1$$
and $\langle\,u\,|v\,\rangle$ is real if $u, v$ are real tangent vectors
(see Theorem~\ref{t-gue150514fa}).
We denote by $dv_X$ the volume form induced by $\langle\,\cdot\,|\,\cdot\,\rangle$.

Let $\omega_0\in C^\infty(X,T^*X)$ be the real $1$-form of unit length
annihilating $T^{1,0}X\oplus T^{0,1}X$ and satisfying $\omega_0(T)=-1$.
The Levi form $\mathcal L_x$ at a point $x\in X$
is the Hermitian quadratic form on $T^{1,0}_xX$ given by
$\mathcal{L}_x(U,\ol V)=-\frac{1}{2i}\langle\,d\omega_0(x)\,,\,U\wedge\ol V\,\rangle$,
$U, V\in T^{1,0}_xX$.

Let $(\,\cdot\,|\,\cdot\,)_k=(\,\cdot\,|\,\cdot\,)$ be the $L^2$ inner product on $C^\infty(X,L^k)$ induced by $h^k$ and
$dv_X$. Let $L^2(X,L^k)$ be the completion of $C^\infty(X,L^k)$ with respect
to $(\,\cdot\,|\,\cdot\,)$. We extend $(\,\cdot\,|\,\cdot\,)$ to $L^2(X,L^k)$.

For every $m\in\mathbb Z$, let $L^2_m(X,L^k)\subset L^2(X,L^k)$
be the completion of $C^\infty_m(X,L^k)$ with respect to $(\,\cdot\,|\,\cdot\,)$.
Let
\begin{equation}\label{e-gue150807}
Q^{(0)}_{m,k}:L^2(X,L^k)\To L^2_m(X,L^k)
\end{equation}
be the orthogonal projection with respect to $(\,\cdot\,|\,\cdot\,)$.
We have the Fourier decomposition
\[L^2(X,L^k)=\bigoplus_{m\in\mathbb{Z}} L^2_m(X,L^k).\]
We first construct a bounded operator on $L^2(X,L^k)$
by putting a weight on the components of the Fourier
decomposition with the help of a cut-off function.
Fix $\delta>0$ and a function
\begin{equation}\label{e-gue160105}
\tau_\delta\in C^\infty_0((-\delta,\delta)),\:\:
0\leq\tau_\delta\leq1, \:\:\text{$\tau_\delta=1$ on
$\left[-\frac{\delta}{2},\frac{\delta}{2}\:\right]$}.
\end{equation}
Let $F_{k,\delta}:L^2(X,L^k)\To L^2(X,L^k)$
be the bounded operator given by
\begin{equation}\label{e-gue150807I}
\begin{split}
F_{k,\delta}:L^2(X,L^k)&\To L^2(X,L^k),\\
u&\mapsto\sum_{m\in\mathbb Z}\tau_\delta\left(\frac{m}{k}\right)Q^{(0)}_{m,k}(u).
\end{split}
\end{equation}
For every $\lambda>0$, we consider the partial Szeg\H{o} projector
\begin{equation}\label{e-gue150806V}
\Pi_{k,\leq\lambda}:L^2(X,L^k)\To \mathcal{H}^0_{b,\leq\lambda}(X,L^k)
\end{equation}
which is the orthogonal projection on the space of equivariant CR functions
of degree less than $\lambda$. Finally, we consider the weighted Fourier-Szeg\H{o}
operator
\begin{equation}\label{e-gue150807II}
P_{k,\delta}:=F_{k,\delta}\circ\Pi_{k,\leq k\delta}\circ F_{k,\delta}:L^2(X,L^k)
\To \mathcal{H}^0_{b,\leq k\delta}(X,L^k).
\end{equation}
The Schwartz kernel of $P_{k,\delta}$ with respect to $dv_X$ is the smooth function
$P_{k,\delta}(x,y)\in L^k_x\otimes(L^k_y)^*$ satisfying
(cf.\ Section \ref{s-ssnI}, \cite[B.2]{MM07})
\begin{equation}\label{sk}
(P_{k,\delta}u)(x)=\int_X P_{k,\delta}(x,y)u(y)\,dv_X(y)\,,\:\:u\in L^2(X,L^k).
\end{equation}
Let $f_j=f_j^{(k)}$, $j=1,\ldots,d_k,$ be an orthonormal basis of
$\mathcal{H}^0_{b,\leq k\delta}(X,L^k)$.
Then
\begin{equation}\label{sk1}
\begin{split}
P_{k,\delta}(x,y)&=\sum_{j=1}^{d_k} (F_{k,\delta}f_j)(x)\otimes
\big((F_{k,\delta}f_j)(y)\big)^*,\\
P_{k,\delta}(x,x)&=\sum_{j=1}^{d_k}\big| (F_{k,\delta}f_j)(x)\big|^2_{h^k},
\end{split}
\end{equation}
(see Lemma \ref{L:sk}) and these representations are independent of the chosen
orthonormal basis. If should be noticed that the full Szeg\H{o} kernel
$\sum_{j=1}^{d_k}|f_j(x)|^2_{h^k}$ doesn't admit 
an asymptotic expansion in general, hence the necessity
of using the cut-off function $F_{k,\delta}$, see the discussion after Corollary~\ref{c-gue150811}. 
In order to describe the Fourier-Szeg\H{o} kernel $P_{k,\delta}(x,y)$ we will localize $P_{k,\delta}$
with respect to a local rigid CR frame $s$ of $L$ on an open set $D\subset X$.
We define the weight of the metric $h$ on $L$ with respect to $s$ to be the function
$\Phi\in C^\infty(D)$ satisfying $\abs{s}^2_{h}=e^{-2\Phi}$.
We have an isometry
\begin{equation}\label{e-gue150806VI}
U_{k,s}:L^2(D)\to L^2(D,L^k),\:\: u\longmapsto ue^{k\Phi}s^k,
\end{equation}
 with
inverse $U_{k,s}^{-1}:L^2(D,L^k)\to L^2(D)$, $\alpha\mapsto e^{-k\Phi}s^{-k}\alpha$.
The localization of $P_{k,\delta}$ with respect to the trivializing rigid CR section $s$ is given by
\begin{equation}\label{e-gue150806VII}
P_{k,\delta,s}:L^2_{\mathrm{comp}}(D)\To L^2(D),\:\:
P_{k,\delta,s}= U_{k,s}^{-1}P_{k,\delta}U_{k,s},
\end{equation}
where $L^2_{\mathrm{comp}}(D)$ is the subspace of elements of $L^2(D)$
with compact support in $D$. Let $P_{k,\delta,s}(x,y)\in C^\infty(D\times D)$
be the Schwartz kernel of $P_{k,\delta,s}$ with respect to $dv_X$, defined as in \eqref{sk}.
The first main result of this work describes the structure of the localized
Fourier-Szeg\H{o} kernel $P_{k,\delta,s}(x,y)$.
\begin{thm}\label{t-gue150807}
Let $X$ be a compact CR manifold with a transversal CR locally free $S^1$-action and
let $L$ be a positive rigid CR line bundle on $X$.
Consider a point $p\in X$ and a canonical coordinate neighborhood
$(D,x=(x_1,\ldots,x_{2n-1}))$ centered at $p=0$.
Let $s$ be a local rigid CR frame of $L$ on $D$ and set $\abs{s}^2_{h}=e^{-2\Phi}$.
Fix $\delta>0$ small enough and $D_0\Subset D$. Then
\begin{equation}\label{sk2}
P_{k,\delta,s}(x,y)=\int_\R e^{ik\varphi(x,y,t)}g(x,y,t,k)dt+O(k^{-\infty})\:\:
\text{on $D_0\times D_0$},
\end{equation}
where $\varphi\in C^\infty( D\times D\times(-\delta,\delta))$ is a phase function such that
for some constant $c>0$ we have
\begin{equation}\label{e-gue150807b}
\begin{split}
&d_x\varphi(x,y,t)|_{x=y}=-2{\rm Im\,}\ddbar_b\Phi(x)+t\omega_0(x),\ \
d_y\varphi(x,y,t)|_{x=y}=2{\rm Im\,}\ddbar_b\Phi(x)-t\omega_0(x),\\
&{\rm Im\,}\varphi(x,y,t)\geq c|z-w|^2,\ \ (x,y,t)\in D\times D\times(-\delta,\delta), x=(z, x_{2n-1}), y=(w, y_{2n-1}),\\
&\mbox{${\rm Im\,}\varphi(x,y,t)+\abs{\frac{\pr\varphi}{\pr t}(x,y,t)}^2\geq c\abs{x-y}^2$,
$(x,y,t)\in D\times D\times(-\delta,\delta)$},\\
&\mbox{$\varphi(x,y,t)=0$ and $\frac{\pr\varphi}{\pr t}(x,y,t)=0$ if and only if $x=y$},
\end{split}
\end{equation}
and $g(x,y,t,k)\in S^{n}_{{\rm loc\,}}(1;D\times D\times(-\delta,\delta))
\cap C^\infty_0(D\times D\times(-\delta,\delta))$ is a symbol with expansion
\begin{equation}\label{e-gue150807bI}
\begin{split}
&g(x,y,t,k)\sim\sum^\infty_{j=0}g_j(x,y,t)k^{n-j}\text{ in }S^{n}_{{\rm loc\,}}
(1;D\times D\times(-\delta,\delta)), \\
\end{split}\end{equation}
and for
$x\in D_0$ and $|t|<\delta$ we have
\begin{equation}\label{e-gue150807a}
g_0(x,x,t)=(2\pi)^{-n}\abs{\det\bigr(R^L_x-2t\mathcal{L}_x\bigr)}\abs{\tau_\delta(t)}^2.
\end{equation}
\end{thm}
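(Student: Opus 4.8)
The plan is to realize $P_{k,\delta,s}$ as an oscillatory integral by composing the known description of the $S^1$-Fourier components of the Szeg\H{o} kernel with the summation over $m$ coming from the weight $\tau_\delta(m/k)$, and then to reduce that sum to an integral in a continuous parameter $t$ via a stationary-phase/Poisson-summation argument. First I would recall (from the theory developed in \cite{HL15}, cf.\ Theorem~3.7 and the surrounding material) that on a BRT trivialization $(D,(z,\theta),\phi)$ the localized $m$-th Fourier–Szeg\H{o} kernel $(Q^{(0)}_{m,k}\Pi_{k,\le k\delta})$ admits, for $|m|\le k\delta$, a complex-stationary-phase description as an oscillatory integral of the form $\int e^{ik\psi_m(x,y,s)}a_m(x,y,s,k)\,ds$ whose leading symbol on the diagonal is governed by $\det(\dot R^L_x)$ with the appropriate Levi-form correction depending on $m/k$; this is essentially the content of \cite[Theorem 1.23]{HL15} and of the off-diagonal expansions proved there. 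I would then write
\[
P_{k,\delta,s}=\sum_{|m|\le k\delta}\tau_\delta(m/k)^2\,(Q^{(0)}_{m,k}\Pi_{k,\le k\delta})_s + O(k^{-\infty}),
\]
using the definition \eqref{e-gue150807II} and the Fourier decomposition, plus the fact that $F_{k,\delta}$ acts diagonally with symbol $\tau_\delta(m/k)$ and that $\Pi_{k,\le k\delta}$ already restricts to $|m|\le k\delta$, so only finitely many terms contribute and the weight enters squared.

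The second and central step is to convert the sum over $m$ into an integral over $t\in(-\delta,\delta)$. Because $e^{im\theta}$ arises from the $S^1$-action via \eqref{e-can1}, the Fourier component $Q^{(0)}_{m,k}$ carries a factor $\frac{1}{2\pi}\int_0^{2\pi} e^{-im\theta}(\,\cdot\,)\,d\theta$; substituting $m=kt$ and applying the Poisson summation formula, the sum $\sum_{m}\tau_\delta(m/k)^2 e^{ikt(\,\cdot\,)}(\cdots)$ becomes $k\int \tau_\delta(t)^2 e^{ik\varphi(x,y,t)}\widetilde g(x,y,t,k)\,dt$ up to $O(k^{-\infty})$, the error coming from the rapid decay of the Fourier transform of the smooth compactly supported weight (this is where one uses that $\delta$ is small and that the relevant phases have no unwanted critical points in $\theta$). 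Collecting the phase of the Fourier component with the $e^{-im\theta}$ oscillation produces a single phase $\varphi(x,y,t)$, and one checks directly from the BRT normal form — exactly as in \cite{HL15} — that $\varphi$ satisfies all the properties in \eqref{e-gue150807b}: the diagonal differential identities follow by differentiating the explicit local expression built from $\phi$ and $\Phi$; the lower bound ${\rm Im\,}\varphi\ge c|z-w|^2$ comes from positivity of $R^L$ in the CR directions; and the combined estimate with $|\partial\varphi/\partial t|^2$ together with the vanishing criterion comes from the transversality of $T$, which makes $\partial\varphi/\partial t$ control the missing $x_{2n-1}-y_{2n-1}$ direction.

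The last step is bookkeeping on the symbol. After the substitution and stationary-phase reduction one gets $g(x,y,t,k)\in S^n_{\rm loc}(1;D\times D\times(-\delta,\delta))$ with the expansion \eqref{e-gue150807bI}; the extra power $k$ from $m=kt$ and the power $k^{n-1}$ from the leading symbol $a_m$ combine to give top order $k^n$. To pin down $g_0(x,x,t)$ one evaluates the leading term of $a_m$ on the diagonal — which by the Szeg\H{o} asymptotics of \cite{HL15} equals $(2\pi)^{-n}|\det(\dot R^L_x-2t\mathcal L_x)|$ up to the Jacobian of the change of variables — and multiplies by the weight $\tau_\delta(t)^2$ coming from $F_{k,\delta}$ applied twice, yielding \eqref{e-gue150807a}. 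The main obstacle I anticipate is the rigorous passage from the discrete sum over $m$ to the continuous integral while keeping uniform control of the symbols: one must show that the Poisson-summation tails are genuinely $O(k^{-\infty})$ in the $S^n_{\rm loc}$-topology, not merely pointwise, and that the stationary-phase expansion of the resulting integral can be performed uniformly in $(x,y)\in D_0\times D_0$ and in $t$ on compact subsets of $(-\delta,\delta)$; handling this cleanly is what forces the restriction to $\delta$ small and to $D_0\Subset D$.
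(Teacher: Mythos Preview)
Your route is genuinely different from the paper's, and it has a real gap. The paper does \emph{not} assemble $P_{k,\delta,s}$ from asymptotics of the individual Fourier--Szeg\H{o} projectors $Q^{(0)}_{m,k}\Pi_{k,\le k\delta}$ and then pass from a sum over $m$ to an integral in $t$. Instead it invokes a black-box result, restated here as Theorem~\ref{t-gue150811} (which is \cite[Theorem~1.13]{Hsiao14}): the oscillatory-integral representation \eqref{sk2}, with the phase properties \eqref{e-gue150807b}, the symbol expansion \eqref{e-gue150807bI} and the leading term \eqref{e-gue150807a}, holds automatically once three hypotheses are verified for $F_{k,\delta}$ and $\Pi_{k,\le k\delta}$. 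These are: (I) $\Box^{(0)}_{b,k}$ has an $O(k^{-n_0})$ small spectral gap with respect to $F_{k,\delta}$; (II) $\Pi_{k,\le k\delta}$ is $k$-negligible away from the diagonal with respect to $F_{k,\delta}$; and (III) the localized operator $F_{k,\delta,s}$ agrees up to $O(k^{-\infty})$ with a classical semiclassical pseudodifferential operator whose symbol is supported in $|\eta_{2n-1}|\le\delta$. Section~\ref{s-gue150811h} establishes (I) via the spectral-gap estimate of Theorem~\ref{t-gue131207I} and the eigenvalue argument of Theorem~\ref{t-gue131208}, while Section~\ref{s-gue150816} proves (III) in Lemma~\ref{l-gue131209} and (II) in Lemma~\ref{l-gue150813}. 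All the microlocal analysis producing the phase $\varphi$ and the symbol $g$ is hidden inside \cite{Hsiao14}.

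The gap in your proposal is the first step: you assume that \cite{HL15} supplies, for each $|m|\le k\delta$, an oscillatory-integral description of $(Q^{(0)}_{m,k}\Pi_{k,\le k\delta})_s$ with phase and symbol depending smoothly on the parameter $m/k$, uniformly over that whole range. That statement is not in \cite{HL15}; what is available there are asymptotics for fixed $m$ (or $m$ in a bounded set) as $k\to\infty$, together with dimension bounds such as \eqref{e-gue150806III}. A \emph{uniform} parametrix in $m/k\in(-\delta,\delta)$ is essentially equivalent in strength to the theorem you are trying to prove, and producing it requires exactly the kind of spectral-gap and off-diagonal control that the paper packages as conditions (I)--(III). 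So your scheme is not wrong in spirit, but it is circular as written: the ``known description'' you invoke is the hard part, and without it the Poisson-summation step has nothing to act on.
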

We refer the reader to Section~\ref{s-ssnI} for the notations in semi-classical analysis used
in Theorem~\ref{t-gue150807}.
The determinant of a Hermitian quadratic form $\mathcal{R}_x$
on $T^{1,0}_xX$ is defined by $\det\mathcal{R}_x=\lambda_1\ldots \lambda_{n-1}$
where $\lambda_1,\ldots, \lambda_{n-1}$ are the eigenvalues of $\mathcal{R}_x$  with respect to
$\langle\,\cdot\,|\,\cdot\,\rangle$.


From Theorem~\ref{t-gue150807}, we deduce the asymptotics of
the kernel $P_{k,\delta}(x,y)$ on the diagonal.
Note that $P_{k,\delta}(x,x)=P_{k,\delta,s}(x,x)$.
\begin{cor}\label{c-gue150811}
In the conditions of Theorem ~\ref{t-gue150807} we have as $k\to\infty$,
\begin{equation}\label{e-gue150811b}
P_{k,\delta}(x,x)\sim\sum\limits^\infty_{j=0}k^{n-j}b_j(x)~\text{in}~ S^n_{\rm loc}(1; X)
\end{equation}
where $b_j(x)\in C^\infty(X)$, $j=0,1,2,\ldots,$ and
\begin{equation}\label{e-gue150811c}
b_0(x)=(2\pi)^{-n}\int_\R \abs{\det\bigr(R^L_x-2t\mathcal{L}_x\bigr)}\abs{\tau_\delta(t)}^2dt,
\end{equation}
with $\tau_\delta(t)\in C^\infty_0(\Real)$ introduced in \eqref{e-gue160105}.
\end{cor}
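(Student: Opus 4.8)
The plan is to read off the on-diagonal asymptotics directly from the local description \eqref{sk2} in Theorem~\ref{t-gue150807}, exploiting the fact that the phase $\varphi$ vanishes identically on the diagonal. First I would cover $X$ by finitely many canonical coordinate patches $D^{(1)},\ldots,D^{(N)}$, each equipped with a local frame $s^{(\ell)}$ of $L$ and a relatively compact open subset $D_0^{(\ell)}\Subset D^{(\ell)}$ with $\bigcup_\ell D_0^{(\ell)}=X$, and recall from the remark preceding the statement that $P_{k,\delta}(x,x)=P_{k,\delta,s^{(\ell)}}(x,x)$ for $x\in D^{(\ell)}$. Thus it suffices to establish \eqref{e-gue150811b} and \eqref{e-gue150811c} on each $D_0^{(\ell)}$; then, since the left-hand side $P_{k,\delta}(x,x)$ is a globally defined smooth function on $X$ and asymptotic expansions in $S^n_{\rm loc}(1;\,\cdot\,)$ are unique, the locally defined coefficients patch together to smooth functions $b_j\in C^\infty(X)$.

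Fix such a patch and put $y=x$ in \eqref{sk2}. By the last line of \eqref{e-gue150807b}, for $x=y$ we have $\varphi(x,x,t)=0$ for all $t\in(-\delta,\delta)$, hence $e^{ik\varphi(x,x,t)}\equiv1$ and
\begin{equation*}
P_{k,\delta}(x,x)=\int g(x,x,t,k)\,dt+O(k^{-\infty})\quad\text{on }D_0.
\end{equation*}
Since $g(\,\cdot\,,\,\cdot\,,\,\cdot\,,k)\in C^\infty_0(D\times D\times(-\delta,\delta))$ is supported in a fixed compact subset of the $t$-interval, independent of $k$, and $g\sim\sum_{j\ge0}g_j k^{n-j}$ in $S^n_{\rm loc}(1;D\times D\times(-\delta,\delta))$, I would differentiate under the integral sign and integrate the symbol estimates in $t$ over this compact set: this shows $\int g(x,x,t,k)\,dt\sim\sum_{j\ge0}k^{n-j}b_j(x)$ in $S^n_{\rm loc}(1;D_0)$ with $b_j(x):=\int g_j(x,x,t)\,dt\in C^\infty(D_0)$. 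Combining with the $O(k^{-\infty})$ remainder gives \eqref{e-gue150811b}, and \eqref{e-gue150811c} follows by taking $j=0$ and inserting the explicit value $g_0(x,x,t)=(2\pi)^{-n}|\det(R^L_x-2t\mathcal{L}_x)||\tau_\delta(t)|^2$ from \eqref{e-gue150807a}.

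The argument is essentially bookkeeping; the only points requiring care are: (i) justifying that passing from the symbol expansion of $g$ to that of its $t$-integral is legitimate in the strong topology of $S^n_{\rm loc}(1;D_0)$, which uses the uniformity of the symbol seminorm estimates for $g$ together with the $k$-independent compact support in $t$; and (ii) checking that the $O(k^{-\infty})$ term in \eqref{sk2}, a priori controlled only on $D_0\times D_0$, contributes an $O(k^{-\infty})$ term to $P_{k,\delta}(x,x)$ in $S^n_{\rm loc}(1;D_0)$ — both immediate from the definitions recalled in Section~\ref{s-ssnI}. I do not expect any genuine obstacle here, since all the analytic difficulty has already been absorbed into Theorem~\ref{t-gue150807}.
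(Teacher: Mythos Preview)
Your proposal is correct and follows essentially the same route as the paper's own proof: set $x=y$ in \eqref{sk2}, use $\varphi(x,x,t)=0$ from \eqref{e-gue150807b} to reduce to $\int g(x,x,t,k)\,dt$, integrate the symbol expansion \eqref{e-gue150807bI} term by term, and then invoke uniqueness of asymptotic expansions on overlaps to globalize the coefficients $b_j$. The only differences are cosmetic --- you spell out the integration-in-$t$ step and the role of the $k$-independent compact support a bit more carefully than the paper does.
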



We now give a simple example to show why we need the $L^2$ cut-off 
function $F_{k,\delta}$. Let $(L,h)$ be a positive holomorphic line bundle over 
a compact complex manifold $M$ of dimension $n-1$.  
Then $X:=M\times S^1$ is a Levi-flat CR manifold of dimension $2n-1$ with transversal CR $S^1$ 
action $e^{i\theta}$ and the pull-back of $(L,h)$ by the projection $M\times S^1\to X$
is a positive CR line bundle over $X$, denoted again $(L,h)$. 
For $k>0$, let $g_1^{(k)},\ldots,g_{r_k}^{(k)}$ be an orthonormal basis of  
the space $H^0(M,L^k)$ of global holomorphic sections with values in $L^k$. 
By the asymptotic expansion of the Bergman kernel of $L^k$
\cite{Cat:97,Zel98} (see also \cite{MM07,MM08}) we have for $x\in M$,
\[\sum^{r_k}_{j=1}\big|g_j^{(k)}(x)\big|^2_{h^{k}}\sim 
k^{n-1}b_0(x)+k^{n-2}b_1(x)+\ldots,\;\;k\to\infty.\]
For each $m\in\mathbb Z$, $\big\{f_{j,m}^{(k)}(x,\theta):=
\frac{1}{\sqrt{2\pi}}g_j^{(k)}(x)e^{im\theta}\big\}_{j=1}^{r_k}$
is an orthonormal basis 
of $H^0_{b,m}(X,L^k)$. Hence, 
$\big\{f_{j,m}^{(k)}(x,\theta);\, m\in\mathbb Z, \abs{m}\leq k\delta\big\}$
%
is an orthonormal basis of the space $H^0_{b,\leq k\delta}(X,L^k)$, 
whose cardinal is denoted by $d_k$. Thus, the Szeg\H{o}
kernel of $H^0_{b,\leq k\delta}(X,L^k)$ is given by
\[
\sum\limits^{d_{k}}_{j=1,\abs{m}\leq k\delta}\big|f_{j,m}^{(k)}(x)\big|^2_{h^{k}}=
\frac{1}{\pi}[k\delta]\sum^{r_k}_{j=1}\big|g_j^{(k)}(x)\big|^2_{h^{k}},
\]
where $[k\delta]$ denotes  Gauss' symbol of $k\delta$. 
The difficulty comes from the fact that the function $\delta\mapsto[k\delta]$ does not admit an 
asymptotic expansion in $k$. To get asymptotic expansion, we consider 
\begin{equation}\label{e:exp}
\sum\limits^{d_{k}}_{j=1,\abs{m}\leq k\delta}\big|(F_{\delta,k}f_{j,m}^{(k)})(x)\big|^2_{h^{k}}=
\frac{1}{2\pi}\sum\limits_{m\in\mathbb Z}
\Big|\tau_\delta\Big(\frac{m}{k}\Big)\Big|^2\sum\limits^{r_k}_{j=1}\big|g_j^{(k)}(x)\big|^2_{h^{k}}.
\end{equation} 
 From Fourier analysis, we can check that 
\[\sum\limits_{m\in\mathbb Z}\abs{\tau_\delta\Big(\frac{m}{k}\Big)}^2\sim 
k\int_\R \abs{\tau_\delta(t)}^2dt+a_0+a_{-1}k^{-1}+\ldots,\:\: k\to\infty.\]
We have therefore an asymptotic expansion of \eqref{e:exp} in $k$.
This is the idea of introducing the $L^2$ cut-off function $F_{\delta,k}$.

We define now the Kodaira map. Consider an open set $D\subset X$ with 
\begin{equation}\label{e:eqc}
\bigcup_{-\pi\leq\theta\leq\pi}e^{i\theta}D\subset D\,,
\end{equation}
and let $s:D\to L$ be a local rigid CR trivializing section on $D$.
For any $u\in C^\infty(X,L^k)$ 
we write $u(x)=s^k(x)\otimes\widetilde u(x)$ on $D$, 
with $\widetilde u\in C^\infty(D)$.
Let $\{f_j\}_{j=1}^{d_k}$ be an orthonormal basis of 
$\mathcal H^0_{b, \leq k\delta}(X, L^k)$ 
with respect to $(\,\cdot\,|\,\cdot\,)$ such that 
$f_j\in\mathcal H^0_{b, m_j}(X, L^k)$ and
set $g_j=F_{k,\delta}f_j$, $1\leq j\leq d_k$.
The Kodaira map is defined on $D$ by
\begin{equation}\label{e-gue150807h0}
\begin{split}
\Phi_{k,\delta}:D&\longrightarrow\mathbb C\mathbb P^{d_k-1},\\
x&\longmapsto\big[F_{k,\delta}f_1,\ldots,F_{k,\delta}f_{d_k}\big]:=
\big[\widetilde g_1(x), \ldots, \widetilde g_{d_k}(x)\big],\;\;\text{for $x\in D$}.
\end{split}
\end{equation}
By the proof of \cite[Lemma 1.22]{HL15} there exist
an open cover of $X$ with sets $D$ satisfying \eqref{e:eqc}.
Thus we have a well-defined global map 
\begin{equation}\label{e-gue150807h0}
\Phi_{k,\delta}:X\longrightarrow\mathbb C\mathbb P^{d_k-1},\quad
x\longmapsto\big[F_{k,\delta}f_1,\ldots,F_{k,\delta}f_{d_k}\big].
\end{equation}
Since $g_j\in{\mathcal H}^0_{b, m_j}(X, L^k)$ we have 
$T\,\widetilde g_j=im_j\widetilde g_j$ hence
\[g_j(e^{i\theta}x)=s^k(e^{i\theta}x)\otimes\widetilde g_j(e^{i\theta}x)=
s^k(e^{i\theta}x)\otimes e^{im_j\theta}\widetilde g_j(x).\] Thus
\begin{equation}
\begin{split}
\Phi_{k, \delta}(e^{i\theta}x)&=[\widetilde g_1(e^{i\theta}x), \cdots, \widetilde g_{d_k}(e^{i\theta}x)]=
[e^{im_1\theta}\widetilde g_1(x), \cdots, e^{im_{d_k}\theta}\widetilde g_{d_k}(x)]\\
&=\big[e^{im_1\theta}\Phi^1_{k,\delta}(x),\ldots,
e^{im_{d_k}\theta}\Phi^{d_k}_{k,\delta}(x)\big]
\end{split}
\end{equation}
We are thus led to consider \emph{weighted diagonal} $S^1$-actions on 
$\mathbb C\mathbb P^N$, that is, actions for which there exists 
$(m_1,\ldots,m_{N},m_{N+1})\in\N_0^{N+1}$
such that for all $ \theta\in[0,2\pi)$,
\begin{equation}\label{e:equi}
e^{i\theta} [z_1,\ldots,z_{N+1}]=
\big[e^{im_1\theta}z_1,\ldots,e^{im_{N+1}\theta}z_{N+1}\big],\:\:
[z_1,\ldots,z_{N+1}]\in\mathbb C\mathbb P^N.
\end{equation}

\begin{thm}\label{t-gue150807I}
Let $(X, T^{1,0}X)$ be a compact CR manifold with a transversal CR locally free $S^1$-action.
Assume there is a rigid positive CR line bundle $L$ over $X$.
Then there exists $\delta_0>0$ such that for all $\delta\in(0,\delta_0)$ there exists
$k(\delta)$ so that for $k>k(\delta)$ and any orthonormal basis 
$\{f_j\}_{j=1}^{d_k}$ of 
$\mathcal H^0_{b, \leq k\delta}(X, L^k)$ 
with respect to $(\,\cdot\,|\,\cdot\,)$ such that 
$f_j\in\mathcal H^0_{b, m_j}(X, L^k)$,
the map $\Phi_{k,\delta}$ introduced in \eqref{e-gue150807h0}
is a smooth CR embedding which is $S^1$-equivariant with respect to
the weighted diagonal action defined by $(m_1,\ldots,m_{d_k})\in\N_0^{d_k}$ 
as in \eqref{e:equi}, that is, 
\[\Phi_{k,\delta}(e^{i\theta}  x)=
e^{i\theta}\Phi_{k,\delta}(x),\:\:x\in X,\;\theta\in[0,2\pi).\]
In particular, the image $\Phi_{k,\delta}(X)\subset\mathbb C\mathbb P^{d_k-1}$ is a CR submanifold
with an induced
weighted diagonal locally free  $S^1$-action.
\end{thm}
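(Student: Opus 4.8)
The plan is to deduce Theorem~\ref{t-gue150807I} from the asymptotic
description of $P_{k,\delta}(x,y)$ in Theorem~\ref{t-gue150807} together with
Corollary~\ref{c-gue150811}, following the classical template of the Kodaira
embedding proof adapted to the CR/$S^1$-equivariant setting. First I would
record the local structure: writing $\Phi_{k,\delta}=[F_{k,\delta}f_1,\ldots,
F_{k,\delta}f_{d_k}]$, the pullback of the Fubini--Study form and the separation
properties of $\Phi_{k,\delta}$ are governed entirely by the Gram-type quantities
$\sum_j (F_{k,\delta}f_j)(x)\otimes((F_{k,\delta}f_j)(y))^*=P_{k,\delta}(x,y)$ from
\eqref{sk1}. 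The key positivity input is that, by \eqref{e-gue150807a}--\eqref{e-gue150811c},
for $\delta$ small enough $b_0(x)=(2\pi)^{-n}\int\abs{\det(R^L_x-2t\mathcal L_x)}
\abs{\tau_\delta(t)}^2dt$ is \emph{strictly positive} on $X$ (here one uses
$R^L_x>0$ and shrinks $\delta$ so that $R^L_x-2t\mathcal L_x$ stays invertible for
$\abs t<\delta$), so $P_{k,\delta}(x,x)\sim k^n b_0(x)$ never vanishes for $k$ large;
hence $\Phi_{k,\delta}$ is well defined on all of $X$ for $k\gg1$.

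Next I would prove that $\Phi_{k,\delta}$ is an \emph{immersion} in the CR sense,
i.e.\ that its differential is injective on $\Complex TX$ at every point. The
standard device is to work in a BRT trivialization $(D,(z,\theta),\phi)$ and a
rigid local frame $s$ with weight $\Phi$, and to apply Theorem~\ref{t-gue150807}
together with differentiated versions of the on-diagonal expansion to the
normalized local sections. Concretely, after a Kodaira-type rescaling
$x=p+\tfrac{u}{\sqrt k}$ one expects the localized kernel $k^{-n}P_{k,\delta,s}$
to converge to a model (a Gaussian-type kernel built from $R^L_p$ and
$\mathcal L_p$, integrated against $\abs{\tau_\delta}^2$) whose associated Kodaira
map is already an embedding of the model space; transversality of the $S^1$-action
guarantees that the $T$-direction is also captured because the Fourier weight
$\tau_\delta(m/k)$ keeps a whole band of Fourier modes $\abs m\le k\delta$. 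Passing
to the limit, the rank of $d\Phi_{k,\delta}$ on $\Complex_xTX$ is maximal for $k$
large, uniformly in $x\in X$ by compactness. This gives local injectivity of
$\Phi_{k,\delta}$ on a uniform neighborhood of the diagonal.

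Then I would establish \emph{global injectivity}. For pairs $(x,y)$ with
$d(x,y)\ge\varepsilon$ one shows $\Phi_{k,\delta}(x)\ne\Phi_{k,\delta}(y)$ using
the off-diagonal decay $P_{k,\delta}(x,y)=O(k^{-\infty})$ coming from
$\mathrm{Im\,}\varphi\ge c\abs{z-w}^2$ and the stationary-phase analysis in
\eqref{sk2}–\eqref{e-gue150807b}, compared against the on-diagonal lower bound
$P_{k,\delta}(x,x),P_{k,\delta}(y,y)\gtrsim k^n$: if the images coincided, the
Cauchy--Schwarz inequality $\abs{P_{k,\delta}(x,y)}^2\le P_{k,\delta}(x,x)
P_{k,\delta}(y,y)$ would have to be an equality up to lower order, contradicting
the rapid decay. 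For nearby pairs one uses the immersion statement of the previous
paragraph. Combining, $\Phi_{k,\delta}$ is injective and an immersion, hence a
smooth CR embedding of the compact manifold $X$ for all $\delta<\delta_0$ and
$k>k(\delta)$.

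Finally, for the equivariance statement I would use that each $f_j$ can be chosen
inside a single Fourier component: since $\mathcal H^0_{b,\le k\delta}(X,L^k)=
\bigoplus_{\abs m\le k\delta}\mathcal H^0_{b,m}(X,L^k)$ and $F_{k,\delta}$ acts as
the scalar $\tau_\delta(m/k)$ on $\mathcal H^0_{b,m}$, an orthonormal basis adapted
to this decomposition consists of eigenvectors $Tf_j=im_j f_j$ with
$\abs{m_j}\le k\delta$, and $F_{k,\delta}f_j=\tau_\delta(m_j/k)f_j$ is again such an
eigenvector. After adding a constant $M$ (e.g.\ $M=\lceil k\delta\rceil$) to shift
all exponents to $\N_0$ — which only changes the $S^1$-action on $\mathbb C\mathbb
P^{d_k-1}$ by an overall character and does not affect $\Phi_{k,\delta}$ as a map —
the transformation law $f_j(e^{i\theta}x)=e^{i(m_j+M)\theta}f_j(x)$ (in a rigid
frame, using that $L$, $h$ are rigid so the frame is $T$-invariant) yields exactly
$\Phi_{k,\delta}(e^{i\theta}x)=[e^{i(m_1+M)\theta}\Phi^1_{k,\delta}(x),\ldots,
e^{i(m_{d_k}+M)\theta}\Phi^{d_k}_{k,\delta}(x)]$, i.e.\ $\Phi_{k,\delta}$ intertwines
the given $S^1$-action with a weighted diagonal action. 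The image is then a CR
submanifold of $\mathbb C\mathbb P^{d_k-1}$ invariant under this weighted diagonal
action, and the action on it is locally free because $\Phi_{k,\delta}$ is an
equivariant embedding of a manifold on which $S^1$ acts locally freely.

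\medskip
The main obstacle I expect is the immersion step: turning the on-diagonal
asymptotics of Theorem~\ref{t-gue150807} into a genuine uniform lower bound for the
rank of $d\Phi_{k,\delta}$, including the transversal $T$-direction. This requires
differentiating the integral representation \eqref{sk2} in both $x$ and $y$,
controlling the resulting symbols in the appropriate $S^{n}_{\mathrm{loc}}$-classes,
and carrying out the $\tfrac1{\sqrt k}$-rescaling carefully so that the band of
Fourier modes selected by $\tau_\delta(m/k)$ survives in the limit and the limiting
model map is seen to be an immersion; the off-diagonal and equivariance parts are
comparatively routine given the tools already assembled in the excerpt.
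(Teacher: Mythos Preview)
Your overall architecture is the right one and matches the paper's: show $\Phi_{k,\delta}$ is well defined from the on-diagonal expansion, prove it is an immersion, prove it is injective, and finally read off equivariance from the Fourier decomposition. The equivariance paragraph is essentially identical to the paper's argument at the end of Section~\ref{s-gue150819}. However, two of your steps diverge substantively from the paper and one of them contains a real gap.

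\textbf{Immersion.} The paper does \emph{not} argue by rescaling to a model kernel. Instead it constructs explicit ``peak sections'' $g^1_k,\ldots,g^n_k\in\mathcal H^0_{b,\le k\delta}(X,L^k)$ (Lemma~\ref{l-gue151009}) by applying $\Pi_{k,\le k\delta}F_{k,\delta}$ to carefully chosen test functions localized at scale $k^{-1/2}$ in $z$ and $k^{-1}$ in $\theta$, and then reads off from Theorem~\ref{t-gue150807} that the Jacobian of $(g^1_k/f_1,\ldots,g^n_k/f_1)$ at $p$ is nondegenerate. Your rescaling idea is a legitimate alternative, but note that the limiting model here is not a single Gaussian: it is an integral over $t\in(-\delta,\delta)$ of Gaussians built from $R^L_p-2t\mathcal L_p$, weighted by $|\tau_\delta(t)|^2$, and you must check that this $t$-average still yields an injective differential in the $T$-direction. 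The paper's calculations \eqref{e-gue151016}--\eqref{e-gue151016III} are exactly what makes this work.

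\textbf{Global injectivity.} Here there is a genuine gap. You assert that for $d(x,y)\ge\varepsilon$ one has $P_{k,\delta}(x,y)=O(k^{-\infty})$ ``coming from $\mathrm{Im}\,\varphi\ge c|z-w|^2$'', but the oscillatory-integral representation \eqref{sk2} is only valid on $D_0\times D_0$ for a \emph{single} BRT chart, so the phase estimate says nothing about pairs $(x,y)$ lying in different charts. (In fact, for $y=e^{i\theta_0}x$ on the same $S^1$-orbit but outside the chart, the kernel $P_{k,\delta}(x,y)$ need not be $O(k^{-\infty})$.) The paper circumvents this by constructing CR peak sections (Lemma~\ref{l-gue151031II}): one builds $\widetilde v_k\in C^\infty_0(D,L^k)$ from the local kernel, shows via Lemma~\ref{l-gue151031I} that $F_{k,\delta}$ is $O(k^{-\infty})$ outside the support, and then uses the spectral-gap estimate \eqref{e-gue131208} to replace $\widetilde v_k$ by its projection to $\mathcal H^0_{b,\le k\delta}$. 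Your Cauchy--Schwarz argument would work \emph{if} you first proved global off-diagonal decay, but that requires essentially the same ingredients (Lemmas~\ref{l-gue151031}--\ref{l-gue151031I}) and is not a consequence of the local phase.

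\textbf{Near-diagonal injectivity.} ``For nearby pairs use the immersion'' is too quick: the local-injectivity radius furnished by an immersion depends on second derivatives of $\Phi_{k,\delta}$, which grow with $k$, so it could shrink faster than the scale at which you have controlled separation. The paper handles sequences $x_k\neq y_k\to p$ by a three-case analysis (Theorem~\ref{t-gue151031}, Cases I--III) according to the size of $k|z^k-w^k|^2$ and of $k(y^k_{2n-1}-x^k_{2n-1}+\widehat{\widehat{\alpha_k}})$, the last case using a second-derivative inequality for $H_k(u)=|P_{k,\delta,s}(ux_k+(1-u)y_k,y_k)|^2/(P_{k,\delta,s}(\cdot,\cdot)P_{k,\delta,s}(y_k,y_k))$ together with the strict inequality $|\!\int t\,g_0\,dt|^2<\int t^2 g_0\,dt\cdot\int g_0\,dt$. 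A rescaling-to-model argument can be made to cover this, but you must treat the anisotropic scales ($k^{-1/2}$ in $z$, $k^{-1}$ in $\theta$) and the intermediate regime explicitly.
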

In~\cite{Hsiao14}, it was proved that if $X$ admits a transversal CR locally free
$S^1$-action and there is a rigid positive CR line bundle $L$ over $X$,
then $X$ can be CR embedded into projective space under the assumption that condition
$Y(0)$ holds on $X$. In Theorem \ref{t-gue150807I} we remove the Levi curvature assumption
$Y(0)$ used in~\cite{Hsiao14}. Moreover, the embedding theorems 
established in~\cite{Hsiao14} are not $S^1$-equivariant.

As a consequence of Theorem \ref{t-gue150807I} we obtain an embedding result
for Levi-flat CR manifolds.
\begin{cor}\label{C:Lf}
Let $X$ be a compact Levi-flat CR manifold. Assume that $X$ admits a
transversal CR locally free $S^1$-action and a positive rigid CR line bundle.
Then there exists $\delta_0>0$ such that for all $\delta\in(0,\delta_0)$ there exists
$k(\delta)$ so that for $k>k(\delta)$ the map $\Phi_{k,\delta}$ introduced in \eqref{e-gue150807h}
is a $C^\infty$ CR embedding of $X$ in $\mathbb C\mathbb P^{d_k-1}$
which is $S^1$-equivariant with respect to weighted diagonal actions.
\end{cor}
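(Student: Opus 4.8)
The plan is to obtain Corollary~\ref{C:Lf} as a direct specialization of Theorem~\ref{t-gue150807I}, so the ``proof'' is essentially a matter of checking that the Levi-flat hypothesis does not disturb any assumption of that theorem. Indeed, a compact Levi-flat CR manifold $X$ equipped with a transversal CR locally free $S^1$-action and a rigid positive CR line bundle $L$ satisfies, verbatim, the hypotheses of Theorem~\ref{t-gue150807I}: the only structural requirements there are compactness, the existence of a transversal CR locally free circle action, and the existence of a rigid positive CR line bundle, and \emph{no} condition on the Levi form $\mathcal{L}$ (in particular not the condition $Y(0)$ used in~\cite{Hsiao14}) enters. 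Hence Theorem~\ref{t-gue150807I} applies and yields $\delta_0>0$ such that for every $\delta\in(0,\delta_0)$ there is $k(\delta)$ so that for $k>k(\delta)$ the map $\Phi_{k,\delta}$ of \eqref{e-gue150807h} is a smooth CR embedding of $X$ into $\mathbb C\mathbb P^{d_k-1}$, equivariant for a suitable weighted diagonal $S^1$-action. This is precisely the content of Corollary~\ref{C:Lf}.

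For orientation, I would record how the Levi-flat assumption specializes the asymptotics driving Theorem~\ref{t-gue150807I}. Since $\mathcal{L}_x\equiv0$ on a Levi-flat manifold, the leading coefficient in Corollary~\ref{c-gue150811} collapses to
\[
b_0(x)=(2\pi)^{-n}\,\abs{\det R^L_x}\int\abs{\tau_\delta(t)}^2\,dt,
\]
which is strictly positive at every $x\in X$ because $(L,h)$ is positive, hence $R^L_x$ is positive definite and $\det R^L_x\neq0$. Thus $P_{k,\delta}(x,x)\sim k^n b_0(x)$ with $b_0>0$ uniformly on $X$, which is exactly the non-vanishing needed for $\Phi_{k,\delta}$ to be well defined for $k$ large. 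The injectivity and immersion properties, together with the equivariance, then follow from the off-diagonal structure \eqref{sk2}--\eqref{e-gue150807b} of the localized Fourier--Szeg\H{o} kernel and the Fourier-component decomposition, precisely as in the proof of Theorem~\ref{t-gue150807I}; no new argument is needed in the Levi-flat case.

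I expect no genuine obstacle: the assertion of the corollary is wholly contained in Theorem~\ref{t-gue150807I}, and the only thing to verify --- that the Levi-flat class is not excluded by some implicit curvature hypothesis --- is immediate from the statement of that theorem. If desired, one may append the remark, already announced in the introduction, that this improves to $C^\infty$ the regularity in the Levi-flat Kodaira embedding theorem of Ohsawa--Sibony~\cite{OS00}, but that comparison is not part of the proof.
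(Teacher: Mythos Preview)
Your proposal is correct and matches the paper's approach: the paper presents Corollary~\ref{C:Lf} simply as ``a consequence of Theorem~\ref{t-gue150807I}'' without a separate proof, and you have correctly identified that the corollary is an immediate specialization since Theorem~\ref{t-gue150807I} imposes no condition whatsoever on the Levi form. Your additional remarks on how the leading coefficient $b_0(x)$ simplifies when $\mathcal{L}_x\equiv0$ are accurate and helpful, though they go beyond what the paper itself records.
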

Ohsawa and Sibony~\cite{Oh12,OS00} 
constructed for every $\kappa\in\mathbb N$ a CR projective embedding of class $C^\kappa$
of a Levi-flat CR manifold by using $\ol\partial$-estimates.
The first and third authors~\cite{HM15} gave a Szeg\H{o} kernel proof of Ohsawa and Sibony's result.
A natural question is whether we can improve the regularity to $\kappa=\infty$.
Adachi~\cite{Ad13} showed that the answer is no, in general.
The analytic difficulty of this problem comes from the fact that
the Kohn Laplacian is not hypoelliptic on Levi flat manifolds.
Corollary \ref{C:Lf} shows that one can find $C^\infty$ CR embeddings
of Levi flat manifolds in the equivariant setting.

When $X$ is strongly pseudoconvex, it is known \cite{OV07} 
that there is a rigid positive CR line bundle over $X$.
We deduce from Theorem~\ref{t-gue150807I}:
\begin{cor}\label{t-gue151230I}
Let $(X, T^{1,0}X)$ be a compact strongly pseudoconvex CR manifold with
a transversal CR locally free $S^1$-action.
Then there exists smooth CR embeddings $\Phi_{k,\delta}$ of $X$ in $\mathbb C\mathbb P^{d_k-1}$
which are $S^1$-equivariant with respect to weighted diagonal actions (cf.\ Theorem \ref{t-gue150807I}).
\end{cor}
We illustrate Corollary \ref{t-gue150807I} in Example \ref{ex_t-gue150807I}.

This paper is organized as follows. In Section \ref{s:prelim} we recall
the necessary notions and results from semiclassical analysis and theory
of CR manifolds with circle action.
In Section \ref{e-gue150811} we prove the asymptotics of the Fourier-Szeg\H{o}
kernel (Theorem \ref{t-gue150807} and Corollary \ref{c-gue150811}).
Section \ref{s-gue150819} deals with the Kodaira embedding theorem.

\section{Preliminaries}\label{s:prelim}

\subsection{Some standard notations}\label{s-gue150508b}
We use the following notations: $\mathbb N=\set{1,2,\ldots}$,
$\mathbb N_0=\mathbb N\cup\set{0}$, $\Real$
is the set of real numbers, $\ol\Real_+:=\set{x\in\Real;\, x\geq0}$.
For a multiindex $\alpha=(\alpha_1,\ldots,\alpha_m)\in\mathbb N_0^m$
we set $\abs{\alpha}=\alpha_1+\ldots+\alpha_m$. For $x=(x_1,\ldots,x_m)\in\Real^m$ we write
\[
\begin{split}
&x^\alpha=x_1^{\alpha_1}\ldots x^{\alpha_m}_m,\quad
 \pr_{x_j}=\frac{\pr}{\pr x_j}\,,\quad
\pr^\alpha_x=\pr^{\alpha_1}_{x_1}\ldots\pr^{\alpha_m}_{x_m}=\frac{\pr^{\abs{\alpha}}}{\pr x^\alpha}\,,\\
&D_{x_j}=\frac{1}{i}\pr_{x_j}\,,\quad D^\alpha_x=D^{\alpha_1}_{x_1}\ldots D^{\alpha_m}_{x_m}\,,
\quad D_x=\frac{1}{i}\pr_x\,.
\end{split}
\]
Let $z=(z_1,\ldots,z_m)$, $z_j=x_{2j-1}+ix_{2j}$, $j=1,\ldots,m$, be coordinates of $\Complex^m$,
where
$x=(x_1,\ldots,x_{2m})\in\Real^{2m}$ are coordinates in $\Real^{2m}$.
Throughout the paper we also use the notation
$w=(w_1,\ldots,w_m)\in\Complex^m$, $w_j=y_{2j-1}+iy_{2j}$, $j=1,\ldots,m$, where
$y=(y_1,\ldots,y_{2m})\in\Real^{2m}$.
We write
\[
\begin{split}
&z^\alpha=z_1^{\alpha_1}\ldots z^{\alpha_m}_m\,,\quad\ol z^\alpha=\ol z_1^{\alpha_1}\ldots\ol z^{\alpha_m}_m\,,\\
&\pr_{z_j}=\frac{\pr}{\pr z_j}=
\frac{1}{2}\Big(\frac{\pr}{\pr x_{2j-1}}-i\frac{\pr}{\pr x_{2j}}\Big)\,,\quad\pr_{\ol z_j}=
\frac{\pr}{\pr\ol z_j}=\frac{1}{2}\Big(\frac{\pr}{\pr x_{2j-1}}+i\frac{\pr}{\pr x_{2j}}\Big),\\
&\pr^\alpha_z=\pr^{\alpha_1}_{z_1}\ldots\pr^{\alpha_m}_{z_m}=\frac{\pr^{\abs{\alpha}}}{\pr z^\alpha}\,,\quad
\pr^\alpha_{\ol z}=\pr^{\alpha_1}_{\ol z_1}\ldots\pr^{\alpha_m}_{\ol z_m}=
\frac{\pr^{\abs{\alpha}}}{\pr\ol z^\alpha}\,.
\end{split}
\]

Let $X$ be a $C^\infty$ orientable paracompact manifold.
We let $TX$ and $T^*X$ denote the tangent bundle of $X$ and the cotangent bundle of $X$ respectively.
The complexified tangent bundle of $X$ and the complexified cotangent bundle of $X$
will be denoted by $\Complex TX$ and $\Complex T^*X$ respectively. We write $\langle\,\cdot\,,\cdot\,\rangle$
to denote the pointwise duality between $TX$ and $T^*X$.
We extend $\langle\,\cdot\,,\cdot\,\rangle$ bilinearly to $\Complex TX\times\Complex T^*X$.

Let $E$ be a $C^\infty$ vector bundle over $X$. The fiber of $E$ at $x\in X$ will be denoted by $E_x$.
Let $F$ be another vector bundle over $X$. We write
$F\boxtimes E^*$ to denote the vector bundle over $X\times X$ with fiber over $(x, y)\in X\times X$
consisting of the linear maps from $E_x$ to $F_y$.

Let $Y\subset X$ be an open set. The spaces of
smooth sections of $E$ over $Y$ and distribution sections of $E$ over $Y$ will be denoted by $C^\infty(Y, E)$ and $\mathscr D'(Y, E)$ respectively.
Let $\mathscr E'(Y, E)$ be the subspace of $\mathscr D'(Y, E)$ whose elements have compact support in $Y$.
For $m\in\Real$, we let $H^m(Y, E)$ denote the Sobolev space
of order $m$ of sections of $E$ over $Y$. Put
\begin{gather*}
H^m_{\rm loc\,}(Y, E)=\big\{u\in\mathscr D'(Y, E);\, \varphi u\in H^m(Y, E),
      \,\forall\varphi\in C^\infty_0(Y)\big\}\,,\\
      H^m_{\rm comp\,}(Y, E)=H^m_{\rm loc}(Y, E)\cap\mathscr E'(Y, E)\,.
\end{gather*}

\subsection{Definitions and notations from semi-classical analysis} \label{s-ssnI}

We recall the Schwartz kernel theorem \cite[Theorems 5.2.1, 5.2.6]{Hor03}, \cite[p. 296]{Tay1:96},
\cite[B.2]{MM07}.
Let $E$ and $F$ be smooth vector bundles over $X$. Let $Y$ be an open set of $X$.
Let $A(\cdot,\cdot)\in \mathscr{D}'(Y\times Y,F\boxtimes E^*)$. For any fixed
$u\in C^\infty_0 (Y,E)$, the linear map
$C^\infty_0 (Y,F^*)\ni v\mapsto  (A(\cdot,\cdot),v\otimes u)\in\mathbb{C}$ defines a distribution $Au\in \mathscr D'(Y,F)$.
The operator $A:C^\infty_0(Y, E)\To \mathscr D'(Y,F)$, $u\mapsto Au$, is linear and continuous.

The Schwartz kernel theorem asserts that, conversely, for any continuous linear operator
$A:C^\infty_0(Y, E)\To \mathscr D'(Y,F)$ there exists a unique distribution
$A(\cdot,\cdot)\in \mathscr{D}'(Y\times Y,F\boxtimes E^*)$ such that
$(Au,v)=(A(\cdot,\cdot),v\otimes u)$ for any $u\in C^\infty_0(Y,E)$, $v\in C^\infty_0(Y,F^*)$.
The distribution $A(\cdot,\cdot)$ is
called the Schwartz distribution kernel of $A$.
We say that $A$ is properly supported if the canonical projections on the two factors restricted to
${\rm Supp\,}A(\cdot,\cdot)\subset Y\times Y$ are proper.

The following two statements are equivalent:
\begin{enumerate}
\item $A$ can be extended to a continuous operator $A:\mathscr E'(Y, E)\To C^\infty(Y, F)$,
\item $A(\cdot,\cdot)\in C^\infty(Y\times Y, F\boxtimes E^*)$.
\end{enumerate}
If $A$ satisfies (a) or (b), we say that $A$ is a \emph{smoothing operator}.
Furthermore, $A$ is smoothing if and only if for all $N\geq0$ and $s\in\Real$,
$A: H^s_{\rm comp\,}(Y, E)\To H^{s+N}_{\rm loc\,}(Y, F)$
is continuous.

Let $A$ be a smoothing operator.
Then for any volume form $d\mu$,  the Schwartz kernel of $A$ is represented by a smooth kernel
$K\in C^\infty(Y\times Y,F\boxtimes E^*)$, called the Schwartz kernel of $A$ with respect to $d\mu$,
such that
\begin{gather}
(Au)(x)=\int_M K(x,y)u(y)\,d\mu(y)\,,
\quad \text{for any $u\in C^\infty_0(Y,E)$}\,.
\end{gather}
Then $A$ can be extended as a linear continuous operator
$A:\mathscr E'(Y,E)\to C^\infty(Y,F)$ by setting
$(Au)(x)=\big(u(\cdot),K(x,\cdot)\big)$, $x\in Y$,
for any $u\in\mathscr E^{\prime}(Y,E)$.

Let $W_1$, $W_2$ be open sets in $\Real^N$ and let $E$ and $F$ be complex
Hermitian vector bundles over $W_1$ and $W_2$ respectively.
Let $s, s'\in\Real$ and $n_0\in\mathbb\Real$.
For a $k$-dependent continuous function $F_k:H^s_{{\rm comp\,}}(W_1,E)\To H^{s'}_{{\rm loc\,}}(W_2,F)$
we write
\[F_k=O(k^{n_0}):H^s_{{\rm comp\,}}(W_1,E)\To H^{s'}_{{\rm loc\,}}(W_2,F),\]
if for any $\chi_0\in C^\infty(W_2), \chi_1\in C^\infty_0(W_1)$, there is a positive constant $c>0$ independent of $k$, such that
\begin{equation} \label{e-gue13628II}
\norm{(\chi_0F_k\chi_1)u}_{s'}\leq ck^{n_0}\norm{u}_{s},\ \ \forall u\in H^s_{{\rm loc\,}}(W_1,E),
\end{equation}
where $\norm{\cdot}_s$ denotes the usual Sobolev norm of order $s$. We write
\[F_k=O(k^{-\infty}):H^s_{{\rm comp\,}}(W_1,E)\To H^{s'}_{{\rm loc\,}}(W_2,F),\]
if $F_k=O(k^{-N}):H^s_{{\rm comp\,}}(W_1,E)\To H^{s'}_{{\rm loc\,}}(W_2,F)$, for every $N>0$.

A $k$-dependent continuous operator
$A_k:C^\infty_0(W_1,E)\To\mathscr D'(W_2,F)$ is called $k$-negligible on $W_2\times W_1$
if for $k$ large enough $A_k$ is smoothing  and for any $K\Subset W_2\times W_1$, any
multi-indices $\alpha$, $\beta$ and any $N\in\mathbb N$ there exists $C_{K,\alpha,\beta,N}>0$
such that
\begin{equation}\label{e-gue13628III}
\abs{\pr^\alpha_x\pr^\beta_yA_k(x, y)}\leq C_{K,\alpha,\beta,N}k^{-N}\:\: \text{on $K$}.
\end{equation}
We write in this case
\[A_k(x,y)=O(k^{-\infty})\:\:\text{on $W_2\times W_1$,}\]
or
\[A_k=O(k^{-\infty})\:\:\text{on $W_2\times W_1$.}\]
If $A_k, B_k:C^\infty_0(W_1,E)\To\mathscr D'(W_2,F)$ are $k$-dependent continuous operators,
we write $A_k= B_k+O(k^{-\infty})$ if $A_k-B_k=O(k^{-\infty})$ on $W_2\times W_1$.


Let $A_k:L^2(X,L^k)\To L^2(X,L^k)$ be a continuous operator.
Let $s$, $s_1$ be local rigid CR frames of $L$ on open sets $D_0\Subset M$, $D_1\Subset M$
respectively, $\abs{s}^2_{h}=e^{-2\Phi}$, $\abs{s_1}^2_{h}=e^{-2\Phi_1}$.
The localization of $A_k$ (with respect to the trivializing rigid CR sections $s$ and $s_1$)  is given by
\begin{equation} \label{e-gue140824II}
\begin{split}
A_{k,s,s_1}:L^2(D_1)\cap \mathscr E'(D_1)\To L^2(D),\:\:
u\longmapsto e^{-k\Phi}s^{-k}A_k(s^k_1e^{k\Phi_1}u)=U^{-1}_{k,s}A_kU_{k,s_1},
\end{split}
\end{equation}
and let $A_{k,s,s_1}(x,y)\in\mathscr D'(D\times D_1)$ be the distribution kernel of $A_{k,s,s_1}$.  Let $\sigma, \sigma', n_0\in\Real$. We write
\[A_k=O(k^{n_0}):H^{\sigma}(X,L^k)\To H^{\sigma'}(X,L^k),\]
if for all local rigid CR frames $s, s_1$ on $D$ and $D_1$ respectively, we have
\[A_{k,s,s_1}=O(k^{n_0}):H^{\sigma}_{{\rm comp\,}}(D_1)\To H^{\sigma'}_{{\rm loc\,}}(D).\]
We write
\[A_k=O(k^{-\infty}):H^\sigma(X,L^k)\To H^{\sigma'}(X,L^k),\]
if for all local rigid CR frames $s, s_1$ on $D$ and $D_1$ respectively, we have
\[A_{k,s,s_1}=O(k^{-\infty}):H^\sigma_{{\rm comp\,}}(D_1)\To H^{\sigma'}_{{\rm loc\,}}(D).\]
We write
\[A_k=O(k^{-\infty})\]
if for all local rigid CR frames $s, s_1$ on $D$ and $D_1$ respectively, we have
\[A_{k,s,s_1}(x,y)= O(k^{-\infty})\:\: \text{on $D\times D_1$}.\]
When $s=s_1$, $D=D_1$, we write $A_{k,s}:=A_{k,s,s}$, $A_{k,s}(x,y):=A_{k,s,s}(x,y)$.

We recall the definition of the semi-classical symbol spaces \cite[Chapter 8]{DiSj99}:
\begin{defn} \label{d-gue140826}
Let $W$ be an open set in $\Real^N$. Let
\begin{gather*}
S(1;W):=\Big\{a\in C^\infty(W)\,|\, \forall\alpha\in\mathbb N^N_0:
\sup_{x\in W}\abs{\pr^\alpha a(x)}<\infty\Big\},\\
S^0_{{\rm loc\,}}(1;W):=\Big\{(a(\cdot,k))_{k\in\Real}\,|\, \forall\alpha\in\mathbb N^N_0,
\forall \chi\in C^\infty_0(W)\,:\:\sup_{k\in\Real, k\geq1}\sup_{x\in W}\abs{\pr^\alpha(\chi a(x,k))}<\infty\Big\}\,.
\end{gather*}
For $m\in\Real$ let
\[
S^m_{{\rm loc}}(1):=S^m_{{\rm loc}}(1;W)=\Big\{(a(\cdot,k))_{k\in\Real}\,|\,(k^{-m}a(\cdot,k))\in S^0_{{\rm loc\,}}(1;W)\Big\}\,.
\]
Hence $a(\cdot,k)\in S^m_{{\rm loc}}(1;W)$ if for every $\alpha\in\mathbb N^N_0$ and $\chi\in C^\infty_0(W)$, there
exists $C_\alpha>0$ independent of $k$, such that $\abs{\pr^\alpha (\chi a(\cdot,k))}\leq C_\alpha k^{m}$ on $W$.

Consider a sequence $a_j\in S^{m_j}_{{\rm loc\,}}(1)$, $j\in\N_0$, where $m_j\searrow-\infty$,
and let $a\in S^{m_0}_{{\rm loc\,}}(1)$. We say that
\[
a(\cdot,k)\sim
\sum\limits^\infty_{j=0}a_j(\cdot,k)\:\:\text{in $S^{m_0}_{{\rm loc\,}}(1)$},
\]
if for every
$\ell\in\N_0$ we have $a-\sum^{\ell}_{j=0}a_j\in S^{m_{\ell+1}}_{{\rm loc\,}}(1)$.
For a given sequence $a_j$ as above, we can always find such an asymptotic sum
$a$, which is unique up to an element in
$S^{-\infty}_{{\rm loc\,}}(1)=S^{-\infty}_{{\rm loc\,}}(1;W):=\cap _mS^m_{{\rm loc\,}}(1)$.

We say that $a(\cdot,k)\in S^{m}_{{\rm loc\,}}(1)$ is a classical symbol on $W$ of order $m$ if
\begin{equation} \label{e-gue13628I}
a(\cdot,k)\sim\sum\limits^\infty_{j=0}k^{m-j}a_j\: \text{in $S^{m}_{{\rm loc\,}}(1)$},\ \ a_j(x)\in
S^0_{{\rm loc\,}}(1),\ j=0,1\ldots.
\end{equation}
The set of all classical symbols on $W$ of order $m$ is denoted by
$S^{m}_{{\rm loc\,},{\rm cl\,}}(1)=S^{m}_{{\rm loc\,},{\rm cl\,}}(1;W)$.
\end{defn}
\begin{defn} \label{d-gue13628I}
Let $W$ be an open set in $\Real^N$. A semi-classical pseudodifferential
operator on $W$ of order $m$ with classical symbol is a $k$-dependent continuous operator
$A_k:C^\infty_0(W)\To C^\infty(W)$ such that the distribution kernel $A_k(x,y)$
is given by the oscillatory integral
\begin{equation}\label{psk-def}
\begin{split}
A_k(x,y)=&\frac{k^N}{(2\pi)^N}\int e^{ik\langle x-y,\eta\rangle}a(x,y,\eta,k)d\eta
+O(k^{-\infty}),\\
&a(x,y,\eta,k)\in S^m_{{\rm loc\,},{\rm cl\,}}(1;W\times W\times\Real^N).\end{split}
\end{equation}
We shall identify $A_k$ with $A_k(x,y)$. It is clear that $A_k$
has a unique continuous extension $A_k:\mathscr E'(W)\To\mathscr D'(W)$. We have
\begin{equation}\label{ps-def}
A_k(x,y)=\frac{k^N}{(2\pi)^N}\int e^{ik\langle x-y,\eta\rangle}\alpha(x,\eta,k)d\eta
+O(k^{-\infty})
\end{equation}
with symbol
\begin{equation}\label{ps-symb}
\alpha(x,\eta,k)\in S^m_{{\rm loc\,},{\rm cl\,}}(1;W\times\Real^N)=
S^m_{{\rm loc\,},{\rm cl\,}}(1;T^*W).
\end{equation}
\end{defn}

\subsection{CR manifolds with circle action}\label{s-gue150808}

Let $(X, T^{1,0}X)$ be a compact CR manifold of dimension $2n-1$, $n\geq 2$, where $T^{1,0}X$ is a CR structure of $X$. That is $T^{1,0}X$ is a subbundle of rank $n-1$ of the complexified tangent bundle $\mathbb{C}TX$, satisfying $T^{1,0}X\cap T^{0,1}X=\{0\}$, where $T^{0,1}X=\overline{T^{1,0}X}$, and $[\mathcal V,\mathcal V]\subset\mathcal V$, where $\mathcal V=C^\infty(X, T^{1,0}X)$. We assume that $X$ admits a $S^1$-action: $S^1\times X\rightarrow X$. We write $e^{i\theta}$ to denote the $S^1$-action. Let $T\in C^\infty(X, TX)$ be the global real vector field induced by the $S^1$-action given by
\begin{equation}\label{e-gue150808}
(Tu)(x)=\frac{\partial}{\partial\theta}\left(u(e^{i\theta}x)\right)|_{\theta=0},\ \ u\in C^\infty(X).
\end{equation}

\begin{defn}
We say that the $S^1$-action $e^{i\theta}$ is CR if
$[T, C^\infty(X, T^{1,0}X)]\subset C^\infty(X, T^{1,0}X)$ and the $S^1$-action is transversal if for each $x\in X$,
$\Complex T(x)\oplus T_x^{1,0}(X)\oplus T_x^{0,1}X=\mathbb CT_xX$. Moreover, we say that the $S^1$-action is locally free if $T\neq0$ everywhere.
\end{defn}

%

Denote by $T^{*1,0}X$ and $T^{*0,1}X$ the dual bundles of
$T^{1,0}X$ and $T^{0,1}X$ respectively. Define the vector bundle of $(0,q)$ forms by
$T^{*0,q}X=\Lambda^q(T^{*0,1}X)$.
Let $D\subset X$ be an open subset. Let $\Omega^{0,q}(D)$
denote the space of smooth sections of $T^{*0,q}X$ over $D$ and let $\Omega_0^{0,q}(D)$
be the subspace of $\Omega^{0,q}(D)$ whose elements have compact support in $D$. Similarly, if $E$ is a vector bundle over $D$, then we let $\Omega^{0,q}(D, E)$ denote the space of smooth sections of $T^{*0,q}X\otimes E$ over $D$ and let $\Omega_0^{0,q}(D, E)$ be the subspace of $\Omega^{0,q}(D, E)$ whose elements have compact support in $D$.

Fix $\theta_0\in]-\pi, \pi[$, $\theta_0$ small. Let
$$d e^{i\theta_0}: \mathbb CT_x X\rightarrow \mathbb CT_{e^{i\theta_0}x}X$$
denote the differential map of $e^{i\theta_0}: X\rightarrow X$. Since the $S^1$-action is CR, we can check that
\begin{equation}\label{e-gue150508fa}
\begin{split}
de^{i\theta_0}:T_x^{1,0}X\rightarrow T^{1,0}_{e^{i\theta_0}x}X,\\
de^{i\theta_0}:T_x^{0,1}X\rightarrow T^{0,1}_{e^{i\theta_0}x}X,\\
de^{i\theta_0}(T(x))=T(e^{i\theta_0}x).
\end{split}
\end{equation}
Let $(e^{i\theta_0})^*:\Lambda^r(\Complex T^*X)\To\Lambda^r(\Complex T^*X)$ be the pull-back map of $e^{i\theta_0}$, $r=0,1,\ldots,2n-1$. From \eqref{e-gue150508fa}, it is easy to see that for every $q=0,1,\ldots,n$,
\begin{equation}\label{e-gue150508faI}
(e^{i\theta_0})^*:T^{*0,q}_{e^{i\theta_0}x}X\To T^{*0,q}_{x}X.
\end{equation}
Let $u\in\Omega^{0,q}(X)$. Define
\begin{equation}\label{e-gue150508faII}
Tu:=\frac{\pr}{\pr\theta}\bigr((e^{i\theta})^*u\bigr)|_{\theta=0}\in\Omega^{0,q}(X).
\end{equation}
(See also \eqref{lI}.) For every $\theta\in[0, 2\pi)$ and every $u\in C^\infty(X,\Lambda^r(\Complex T^*X))$, we write $u(e^{i\theta}  x):=(e^{i\theta})^*u(x)$. It is clear that for every $u\in C^\infty(X,\Lambda^r(\Complex T^*X))$, we have
\begin{equation}\label{e-gue150510f}
u(x)=\sum_{m\in\mathbb Z}\frac{1}{2\pi}\int^{\pi}_{-\pi}u(e^{i\theta}  x)e^{-im\theta}d\theta.
\end{equation}

Let $\ddbar_b:\Omega^{0,q}(X)\rightarrow\Omega^{0,q+1}(X)$ be the tangential Cauchy-Riemann operator.
Since the $S^1$-action is CR, it is straightforward to see that (see also \eqref{e-gue150514f})
\[T\ddbar_b=\ddbar_bT\ \ \mbox{on $\Omega^{0,q}(X)$}.\]

\begin{defn}\label{d-gue50508d}
Let $D\subset U$ be an open set. We say that a function $u\in C^\infty(D)$ is rigid if $Tu=0$. We say that a function $u\in C^\infty(X)$ is Cauchy-Riemann (CR for short)
if $\ddbar_bu=0$. We say that $u\in C^\infty(X)$ is rigid CR if  $\ddbar_bu=0$ and $Tu=0$.
\end{defn}

\begin{defn} \label{d-gue150508dI}
Let $F$ be a complex vector bundle over $X$. We say that $F$ is rigid (resp.\ CR, resp.\ rigid CR) if there exists
an open cover $(U_j)_j$ of $X$ and trivializing frames $\set{f^1_j,f^2_j,\dots,f^r_j}$ on $U_j$,
such that the corresponding transition matrices are rigid (resp.\ CR, resp.\ rigid CR). The frames $\set{f^1_j,f^2_j,\dots,f^r_j}$ are called rigid (resp.\ CR, resp.\ rigid CR) frames.
\end{defn}
If $F$ is a rigid vector bundle, we can define the operator $T$ on $\Omega^{0,q}(X,F)$.
Indeed, every $u\in\Omega^{0,q}(X,F)$ can be written on $U_j$ as
$u=\sum u_\ell\otimes f^\ell_j$ and we set $Tu=\sum Tu_\ell\otimes f^\ell_j$.
Then $Tu$ is well defined as element of $\Omega^{0,q}(X,F)$,
since the entries of the transition matrices between different frames $\set{f^1_j,f^2_j,\dots,f^r_j}$
are annihilated by $T$.

\begin{ex}
Let $X$ be a compact CR manifold  with a locally free transversal CR $S^1$ action. Let $\{Z_j\}_j$ be a trivializing frame of $T^{1, 0}X$ defined in (\ref{e-can2}) in the BRT trivialization. It is easy to check that the transition functions of such frames are rigid CR  and thus $T^{1, 0}X$ is a rigid CR vector bundle. Moreover, $\det{T^{1, 0}X}$ the determinant bundle of $T^{1, 0}X$ is a rigid CR line bundle.
\end{ex}

\begin{ex}
Let $(L, h)\overset{\pi}{\rightarrow}M$ be a Hermitian line bundle over a complex manifold $M$. Consider the circle bundle $X=\{v\in L: h(v)=1\}$ over $M$. Then $X$ is a compact CR manifold with a globally free transversal CR $S^1$ action. Let $E$ be a holomorphic vector bundle over $M$. Then the restriction of the pull back $\pi^\ast E|_X$ on $X$ is a rigid CR vector bundle over $X$.
\end{ex}

\medskip

From now on, let $L$ be a rigid CR line bundle over $X$.
We fix an open covering $(U_j)_j$ and a family $(s_j)_j$ of rigid CR frames $s_j$ on
$U_j$.
Let $L^k$ be the $k$-th tensor power of $L$.
Then $(s_j^{\otimes k})_j$ are rigid CR frames for $L^k$.

The tangential Cauchy-Riemann operator
$\overline\partial_b:\Omega^{0,q}(X, L^k)\rightarrow\Omega^{0,q+1}(X, L^k)$ is well defined.
Since $L^k$ is rigid, we can also define $Tu$  for every $u\in\Omega^{0,q}(X,L^k)$ and we have
\begin{equation}\label{e-gue150508d}
T\ddbar_b=\ddbar_bT\ \ \mbox{on $\Omega^{0,q}(X,L^k)$}.
\end{equation}
For every $m\in\mathbb Z$, let
\begin{equation}\label{e-gue150508dI}
\Omega^{0,q}_m(X,L^k):=\set{u\in\Omega^{0,q}(X,L^k);\, Tu=imu}.
\end{equation}
For $q=0$, we write $C^\infty_m(X,L^k):=\Omega^{0,0}_m(X,L^k)$.

Let $h$ be a Hermitian metric on $L$. The local weight of $h$
with respect to a local rigid CR frame $s$ of $L$ over an open subset $D\subset X$
is the function $\Phi\in C^\infty(D, \mathbb R)$ for which
\begin{equation}\label{e-gue150808g}
|s(x)|^2_{h^L}=e^{-2\Phi(x)}, x\in D.
\end{equation}
We denote by $\Phi_j$ the weight of $h$ with respect to $s_j$.
\begin{defn}\label{d-gue150808g}
Let $L$ be a rigid CR line bundle and let $h$ be a Hermitian metric on $L$.
The curvature of $(L,h)$ is the the Hermitian quadratic form $R^L=R^{(L,h)}$ on $T^{1,0}X$
defined by
\begin{equation}\label{e-gue150808w}
R_p^L(U,\overline V)=\,\big\langle d(\overline\partial_b\Phi_j-\partial_b\Phi_j)(p),
U\wedge\overline V\,\big\rangle,\:\: U, V\in T_p^{1,0}X,\:\: p\in U_j.
\end{equation}
\end{defn}
Due to \cite[Proposition 4.2]{HM09}, $R^L$ is a well-defined global Hermitian form,
since the transition functions between different frames $s_j$ are annihilated by $T$.
\begin{defn}\label{d-gue150808gI}
We say that $(L,h)$ is positive if the associated curvature $R^L_x$ is positive definite
at every $x\in X$.
\end{defn}

\begin{defn}\label{d-gue150808gII}
Let $L$ be a rigid line bundle. A Hermitian metric $h$ on $L$
is said to be rigid if for every $j$ we have
$T\Phi_j=0$ on $U_j$.
\end{defn}
The definition does not depend on the choice of covering and rigid frames.
The following is well-known (see Lemma 1.20 in~\cite{HL15}).

\begin{lem}\label{l-rigid}
Let $L$ be a rigid CR line bundle. Then
there is a rigid Hermitian fiber metric on $L$. Moreover, for any Hermitian metric $\Td h$ on $L$,
there is a rigid Hermitian metric $h$ of $L$ such that $R^{(L,\widetilde{h})}=R^{(L,h)}$ on $X$.
\end{lem}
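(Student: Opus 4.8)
The plan is to exploit the $S^1$-averaging operator to turn an arbitrary metric into a rigid one without changing the curvature. First I would recall that for a rigid CR line bundle $L$ we may fix a covering $(U_j)_j$ and rigid CR frames $s_j$, so that the transition functions $g_{jk}$ satisfy $Tg_{jk}=0$ (and $\ddbar_b g_{jk}=0$). Given any Hermitian metric $\Td h$ on $L$ with local weights $\Td\Phi_j$ (so $\abs{s_j}^2_{\Td h}=e^{-2\Td\Phi_j}$), the compatibility condition on overlaps reads $\Td\Phi_j-\Td\Phi_k=-\log\abs{g_{jk}}^2$ on $U_j\cap U_k$ — but this right-hand side need not be $T$-invariant a priori, hence I would instead work intrinsically: average over the group. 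Concretely, define a new metric $h$ by
\begin{equation}\label{e-gue-avg}
\abs{v}^2_{h}:=\frac{1}{2\pi}\int_{-\pi}^{\pi}\abs{(e^{i\theta})^*v}^2_{\Td h}\,d\theta,\qquad v\in L^k_x,
\end{equation}
where the pullback uses the CR $S^1$-action lifted to $L$ via the rigid frames (recall $L$ is \emph{rigid CR}, so the $S^1$-action on $X$ lifts to $L$ and $L^k$, compatibly with $\ddbar_b$ and $T$). This $h$ is a smooth Hermitian metric because the integrand is smooth in $(\theta,x)$.

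Next I would verify rigidity: writing the local weight $\Phi_j$ of $h$ with respect to $s_j$, formula~\eqref{e-gue-avg} together with the $T$-invariance of the frames $s_j$ (they are rigid CR) gives $e^{-2\Phi_j(x)}=\frac{1}{2\pi}\int_{-\pi}^{\pi}e^{-2\Td\Phi_j(e^{i\theta}x)}\,d\theta$, manifestly independent of the $\theta$-coordinate along the orbit, so $T\Phi_j=0$ on $U_j$; by Definition~\ref{d-gue150808gII} this says $h$ is rigid. The remaining point is the curvature identity $R^{(L,\Td h)}=R^{(L,h)}$ on $X$. For this I would use that $h$ and $\Td h$ are metrics on the same line bundle, so $h=e^{-2\psi}\Td h$ for a global function $\psi\in C^\infty(X,\Real)$, with $\Phi_j=\Td\Phi_j+\psi$ on each $U_j$; by Definition~\ref{d-gue150808g}, $R^{(L,h)}_p-R^{(L,\Td h)}_p$ equals $\langle d(\ddbar_b\psi-\pr_b\psi)(p),\,U\wedge\ol V\rangle$ up to sign, i.e.\ it is the Hermitian form attached to the \emph{globally exact} expression $d(\ddbar_b-\pr_b)\psi$. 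The key observation is that $\ddbar_b\psi-\pr_b\psi$ is a global $1$-form and, on $T^{1,0}X\times\ol{T^{1,0}X}$, its $d$ evaluated on $U\wedge\ol V$ computes the Levi-type Hessian of $\psi$; averaging $\psi$ over $S^1$ does not change this contribution because $Tu=imu$-components are killed — more precisely, since $\psi$ is obtained from $\Td\psi=0$ by the fact that $\Phi_j-\Td\Phi_j$ is $S^1$-invariant minus its average, the difference $R^{(L,h)}-R^{(L,\Td h)}$ is the Hermitian form of $\langle d(\ddbar_b-\pr_b)\psi(p),U\wedge\ol V\rangle$ where $\psi$ is pure-positive-frequency-free; a direct computation in BRT coordinates, where $\ddbar_b\psi-\pr_b\psi$ has an explicit form in terms of $\pr\psi/\pr z_j$, $\pr\psi/\pr\ol z_j$, shows this vanishes once $T\psi=0$.

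Actually the cleanest route, which I would adopt, is: work in a single BRT trivialization $(D,(z,\theta),\phi)$, where by~\eqref{e-can1}–\eqref{e-can2} the CR structure is spanned by $Z_j=\pr_{z_j}-i(\pr\phi/\pr z_j)\pr_\theta$ and $T=\pr_\theta$. A computation (cf.\ \cite[Proposition 4.2]{HM09}) shows that for a rigid weight $\Phi$ the curvature is $R^L=2\sum_{j,l}\pr^2\Phi/\pr z_j\pr\ol z_l\,dz_j\wedge d\ol z_l$ plus a term involving $\phi$ that is common to all metrics on $L$; thus $R^{(L,h)}-R^{(L,\Td h)}$ depends only on $\psi=\Phi_j-\Td\Phi_j$ through $\sum_{j,l}(\pr^2\psi/\pr z_j\pr\ol z_l)\,dz_j\wedge d\ol z_l$ \emph{only when $\psi$ is rigid}; for general $\psi$ there are extra $\pr_\theta$-terms, but these are exactly killed by replacing $\psi$ by its $S^1$-average. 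Since~\eqref{e-gue-avg} replaces $\Td\Phi_j$ precisely by its $\theta$-average, the extra terms cancel and $R^{(L,h)}=R^{(L,\Td h)}$ follows. The main obstacle is this last curvature-invariance computation: one must show carefully that the $\theta$-dependent part of the local weight contributes nothing to the curvature $R^L$ on $T^{1,0}X\times\ol{T^{1,0}X}$, i.e.\ that $\langle d(\ddbar_b\psi-\pr_b\psi)(p),U\wedge\ol V\rangle$ depends only on the $S^1$-average of $\psi$; this is a direct but slightly delicate check in BRT coordinates using $Z_j(e^{im\theta})=0$ for the rigid part and tracking the $\pr_\theta$-terms for the oscillatory part. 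The existence of a rigid metric (first assertion) is then the special case $\Td h$ arbitrary, taking $h$ as in~\eqref{e-gue-avg}.
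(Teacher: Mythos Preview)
The paper does not give its own proof of this lemma; it is quoted verbatim from \cite[Lemma~1.20]{HL15}, so there is no in-paper argument to compare against. Your $S^1$-averaging construction is the right idea for the \emph{first} assertion and works cleanly: since the transition functions $g_{jk}$ are rigid, $\log\abs{g_{jk}}$ is $T$-invariant, so averaging $e^{-2\Td\Phi_j}$ (or $\Td\Phi_j$) over the orbit produces weights that still satisfy the cocycle relation and are annihilated by $T$; this yields a rigid Hermitian metric.

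Your argument for the \emph{second} assertion, however, has a genuine gap. First a slip: the displayed averaging formula averages $e^{-2\Td\Phi_j}$, not $\Td\Phi_j$, so the resulting $\Phi_j$ is \emph{not} the $\theta$-average of $\Td\Phi_j$; your later claim that the construction ``replaces $\Td\Phi_j$ precisely by its $\theta$-average'' is therefore false. More importantly, even if one averages the weight itself, the pointwise identity $R^{(L,h)}=R^{(L,\Td h)}$ does not follow from the mechanism you describe. In a BRT chart with $\phi\equiv0$ one computes directly that $R^{(L,\Td h)}(Z_k,\ol Z_l)=2\,\pr_{z_k}\pr_{\ol z_l}\Td\Phi$; if for instance $\Td\Phi(z,\theta)=g(\theta)\abs{z}^2$ with $g$ nonconstant, this depends on $\theta$, while the curvature of any rigid metric is $\theta$-independent in such a chart. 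So the two curvatures cannot coincide pointwise. Your sketch asserts that ``the $\theta$-dependent part of the local weight contributes nothing to $R^L$ on $T^{1,0}X\times\ol{T^{1,0}X}$'', but that is exactly what fails: the oscillatory part of $\Td\Phi$ contributes its complex Hessian in $z$, which is generally nonzero. Consequently your averaging argument establishes rigidity but not the curvature equality; for the precise statement and its proof one must go back to the cited source rather than the heuristic you outline.
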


From now on, we assume that $(L,h)$ is positive. In view of Lemma \ref{l-rigid}
we can and will assume without loss of generality that $h$ is rigid.
%
There is also a notion of rigid Hermitian metric for vector bundles of arbitrary
rank, which we recall now.
\begin{defn}\label{d-gue150514f}
Let $F$ be a rigid vector bundle over $X$. Let $\langle\,\cdot\,|\,\cdot\,\rangle_F$ be a Hermitian metric on $F$. We say that $\langle\,\cdot\,|\,\cdot\,\rangle_F$ is a rigid Hermitian metric if for every local rigid frame $f_1,\ldots, f_r$ of $F$, we have $T\langle\,f_j\,|\,f_k\,\rangle_F=0$, for every $j,k=1,2,\ldots,r$.
\end{defn}

For the following result we refer to \cite[Theorem 2.10]{CHT15}.

\begin{thm}\label{t-gue150514fa}
On every rigid vector bundle $F$ over $X$ there is a rigid Hermitian metric $\langle\,\cdot\,|\,\cdot\,\rangle_F$.
\end{thm}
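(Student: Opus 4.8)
The plan is to produce the rigid Hermitian metric on $F$ by a partition-of-unity averaging argument, where the key point is that the gluing functions must themselves be rigid so that the averaging preserves the annihilation-by-$T$ property. First I would fix, as in Definition \ref{d-gue150508dI}, an open cover $(U_j)_j$ of $X$ together with rigid trivializing frames $\{f^1_j,\ldots,f^r_j\}$ on $U_j$, so that the transition matrices $g_{jk}$ between these frames satisfy $Tg_{jk}=0$. On each $U_j$ one defines the obvious Hermitian metric $\langle\,\cdot\,|\,\cdot\,\rangle_j$ for which $f^1_j,\ldots,f^r_j$ is an orthonormal frame; this metric is manifestly rigid on $U_j$ in the sense of Definition \ref{d-gue150514f}, since $\langle f^\ell_j\,|\,f^m_j\rangle_j=\delta_{\ell m}$ is constant, hence annihilated by $T$.

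The main step is to patch these local metrics together without destroying rigidity. For this I would use the fact that $X$ carries a transversal CR locally free $S^1$-action, so one can choose a partition of unity $(\chi_j)_j$ subordinate to $(U_j)_j$ consisting of \emph{rigid} functions, i.e.\ $T\chi_j=0$ for every $j$; concretely, starting from any partition of unity $(\widetilde\chi_j)_j$ subordinate to an $S^1$-invariant refinement of the cover, one averages over the circle, $\chi_j(x):=\frac{1}{2\pi}\int_{-\pi}^{\pi}\widetilde\chi_j(e^{i\theta}x)\,d\theta$ (using that the $U_j$ may be taken $S^1$-invariant, or shrinking as needed), and these $\chi_j$ are rigid by construction and still form a partition of unity. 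Then set
\[
\langle\,u\,|\,v\,\rangle_F:=\sum_j\chi_j\,\langle\,u\,|\,v\,\rangle_j,\qquad u,v\in F_x,\ x\in X.
\]
This is a smooth positive-definite Hermitian metric on $F$.

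It remains to check rigidity of $\langle\,\cdot\,|\,\cdot\,\rangle_F$, which is where one uses that both the $\chi_j$ and the transition matrices are annihilated by $T$. Fix a rigid local frame $f_1,\ldots,f_r$ of $F$ near a point $x_0$; on the overlap with $U_j$ one has $f_\ell=\sum_m (a_j)_{\ell m}f^m_j$ with $(a_j)_{\ell m}$ rigid (being, up to the fixed rigid frames, an entry of a product of rigid transition matrices), so $\langle f_\ell\,|\,f_p\rangle_j=\sum_m (a_j)_{\ell m}\overline{(a_j)_{pm}}$ is rigid, and hence $\langle f_\ell\,|\,f_p\rangle_F=\sum_j\chi_j\langle f_\ell\,|\,f_p\rangle_j$ is a sum of products of $T$-annihilated functions, so $T\langle f_\ell\,|\,f_p\rangle_F=0$. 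The only genuine obstacle is the construction of the rigid partition of unity, i.e.\ making sure one can choose the cover $S^1$-invariant so that circle-averaging of a subordinate partition of unity stays subordinate; this is standard for (locally free) circle actions on compact manifolds and is exactly the device already used, e.g., to produce the rigid metric on $\Complex TX$ in Theorem \ref{t-gue150514fa}'s companion results and in \cite[Theorem 2.10]{CHT15}. Everything else is a routine verification.
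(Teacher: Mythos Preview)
Your argument is the standard one and is essentially correct. Note, however, that the paper does not actually give a proof of this theorem; it simply refers to \cite[Theorem 2.10]{CHT15}. So there is nothing in the present paper to compare your argument to, and what you have sketched is presumably the argument carried out in that reference.

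A couple of remarks on the one point you flagged. The step that genuinely needs care is producing an $S^1$-invariant cover over which $F$ still admits rigid trivializing frames. You cannot simply average a partition of unity subordinate to the original $(U_j)_j$, since the $S^1$-saturation of ${\rm Supp\,}\widetilde\chi_j$ need not stay inside $U_j$. What one does instead is observe that, because the rigid frames $f^\ell_j$ satisfy $Tf^\ell_j=0$ (by the very definition of $T$ on sections of $F$ given right after Definition~\ref{d-gue150508dI}), they are invariant under the local flow of $T$ and hence extend along $S^1$-orbits to the saturation $S^1\cdot U_j$; rigidity of the transition matrices ensures these extensions still have rigid transitions. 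This produces an $S^1$-invariant cover with rigid frames, after which your averaging argument goes through exactly as you wrote. The paper itself invokes this same device elsewhere (see the last paragraph of the proof of Theorem~\ref{t-gue150807I}, where the existence of $S^1$-invariant trivializing neighborhoods is quoted from \cite[Lemma 1.20]{HL15}).
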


Baouendi-Rothschild-Treves~\cite{BRT85} proved that $T^{1,0}X$ is a rigid complex vector bundle over $X$. By Theorem~\ref{t-gue150514fa}, there is a rigid Hermitian metic on $T^{1,0}X$. From now on, we take a rigid Hermitian metric $\langle\,\cdot\,|\,\cdot\,\rangle$ on $\Complex TX$ such that $T^{1,0}X\perp T^{0,1}X$, $T\perp (T^{1,0}X\oplus T^{0,1}X)$, $\langle\,T\,|\,T\,\rangle=1$. The Hermitian metric $\langle\,\cdot\,|\,\cdot\,\rangle$ on
$\mathbb CTX$ induces by duality a Hermitian metric on $\mathbb CT^*X$ and also on the bundles of $(0,q)$ forms $T^{*0,q}X, q=0,1\cdots,n-1$. We shall also denote all these induced
metrics by $\langle\,\cdot\,|\,\cdot\,\rangle$. For every $v\in T^{*0,q}X$, we write
$\abs{v}^2:=\langle\,v\,|\,v\rangle$.

The Hermitian metrics on $T^{*0,q}X$ and $L$ induce
Hermitian metrics on $T^{*0,q}X\otimes L^k$, $q=0,1,\ldots,n$. We shall also denote these induced metrics by $\langle\,\cdot\,|\,\cdot\,\rangle_{h^k}$. For $f\in\Omega^{0,q}(X,L^k)$, we denote the pointwise norm $\abs{f(x)}^2_{h^k}:=\langle\,f(x)\,|\,f(x)\rangle_{h^k}$.
Let $dv_X=dv_X(x)$ the volume form on $X$ induced by the fixed
Hermitian metric $\langle\,\cdot\,|\,\cdot\,\rangle$ on $\Complex TX$.
Then we get natural global $L^2$ inner products $(\,\cdot\,|\,\cdot\,)$
on $\Omega^{0,q}(X,L^k)$ and $\Omega^{0,q}(X)$ respectively.
We denote by $L^2(X,T^{*0,q}X
\otimes L^k)$ and $L^2(X,T^{*0,q}X)$ the completions of
$\Omega^{0,q}(X,L^k)$ and $\Omega^{0,q}(X)$ with respect to $(\,\cdot\,|\,\cdot\,)$.

Similarly, for each $m\in\mathbb Z$, we denote by $L^2_m(X,T^{*0,q}X\otimes L^k)$
and $L^2_m(X,T^{*0,q}X)$ the completions of $\Omega^{0,q}_m(X,L^k)$
and $\Omega^{0,q}_m(X)$ with respect to $(\,\cdot\,|\,\cdot\,)$.
We extend $(\,\cdot\,|\,\cdot\,)$ and $(\,\cdot\,|\,\cdot\,)$ to $L^2(X,T^{*0,q}X\otimes L^k)$
and $L^2(X,T^{*0,q}X)$ in the standard way.  For $f\in\Omega^{0,q}(X,L^k)$
or $f\in\Omega^{0,q}(X)$,
we denote $\norm{f}^2:=(\,f\,|\,f\,)$.

\subsection{Expression of $T$ and $\ddbar_b$ in BRT trivializations}\label{s-gue150514}
%
%
In a BRT trivialization $(D,(z,\theta),\phi)$,
we have a useful formula for the operator $T$ on $\Omega^{0,q}(X)$
defined by \eqref{e-gue150508faII}.
It is clear that
$$\{d\overline{z}_{j_1}\wedge\ldots\wedge d\overline{z}_{j_q}, 1\leq j_1<\ldots<j_q\leq n-1\}$$
is a rigid frame of $T^{\ast0,q}X$ on $D$ so for $u\in\Omega^{0,q}(X)$ we write
\begin{equation*}\label{e-gue150524fb}
u=\sum\limits_{j_1<\ldots<j_q}u_{j_1\ldots j_q}d\overline{z}_{j_1}\wedge\ldots\wedge d\overline{z}_{j_q}
\:\:\text{on $D$}.
\end{equation*}
Then we can check that
\begin{equation}\label{lI}
Tu=\sum\limits_{j_1<\ldots<j_q}(Tu_{j_1\ldots j_q})d\overline{z}_{j_1}\wedge\ldots\wedge d\overline{z}_{j_q}
\:\:\text{on $D$}.
\end{equation}
Note that on BRT trivialization $(D,(z,\theta),\phi)$, we have
\begin{equation}\label{e-gue150514f}
\ddbar_b=\sum^{n-1}_{j=1}d\ol z_j\wedge\Big(\frac{\partial}{\partial\ol z_j}+i\frac{\partial\phi}{\partial\ol z_j}(z)\frac{\partial}{\partial\theta}\Big).
\end{equation}

\section{Szeg\H{o} kernel asymptotics}\label{e-gue150811}

In this section, we will prove Theorem~\ref{t-gue150807}. We first introduce some notations. Let
\[\ol{\pr}^{*}_b:\Omega^{0,q+1}(X,L^k)\To\Omega^{0,q}(X,L^k)\]
be the formal adjoint of $\ddbar_b$ with respect to $(\,\cdot\,|\,\cdot\,)$. Since $\langle\,\cdot\,|\,\cdot\,\rangle$ and $h$ are rigid, we can check that
\begin{equation}\label{e-gue150517}
\begin{split}
&T\ddbar^{*}_b=\ddbar^{*}_bT\ \ \mbox{on $\Omega^{0,q}(X,L^k)$, $q=1,2,\ldots,n-1$},\\
&\ddbar^{*}_b:\Omega^{0,q+1}_m(X,L^k)\To\Omega^{0,q}_m(X,L^k),\ \ \forall m\in\mathbb Z.
\end{split}
\end{equation}
Put
\begin{equation}\label{e-gue150517I}
\Box^{(q)}_{b,k}:=\ddbar_b\ol{\pr}^{*}_b+\ol{\pr}^{*}_{b}\ddbar_b:\Omega^{0,q}(X,L^k)\To\Omega^{0,q}(X,L^k).
\end{equation}
From \eqref{e-gue150508d} and \eqref{e-gue150517}, we have
\begin{equation}\label{e-gue150517II}
\begin{split}
&T\Box^{(q)}_{b,k}=\Box^{(q)}_{b,k}T\ \ \mbox{on $\Omega^{0,q}(X,L^k)$, $q=0,1,\ldots,n-1$},\\
&\Box^{(q)}_{b,k}:\Omega^{0,q}_m(X,L^k)\To\Omega^{0,q}_m(X,L^k),\ \ \forall m\in\mathbb Z.
\end{split}
\end{equation}
Let $\Pi_k:L^2(X)\To{\rm Ker\,}\Box^{(0)}_{b,k}$ be the orthogonal projection
(the Szeg\H{o} projector).
\begin{defn}\label{d-gue130820m}
Let $A_k:L^2(X,L^k)\To L^2(X,L^k)$ be a continuous operator.
Let $D\Subset X$. We say that $\Box^{(0)}_{b,k}$ has $O(k^{-n_0})$
small spectral gap on $D$ with respect to $A_k$ if for every $D'\Subset D$,
there exist constants $C_{D'}>0$,  $n_0, p\in\mathbb N$, $k_0\in\mathbb N$,
such that for all $k\geq k_0$ and $u\in C^\infty_0(D',L^k)$,
we have
\[\norm{A_k(I-\Pi_k)u}\leq
C_{D'}\,k^{n_0}\sqrt{(\,(\Box^{(0)}_{b,k})^pu\,|\,u\,)}\,.\]
\end{defn}

Fix $\lambda>0$ and let $\Pi_{k,\leq\lambda}$ be as in \eqref{e-gue150806V}.

\begin{defn}\label{d-gue131205m}
Let $A_k:L^2(X,L^k)\To L^2(X,L^k)$ be a continuous operator.
We say that $\Pi_{k,\leq\lambda}$ is $k$-negligible away the diagonal with respect to $A_k$
on $D\Subset X$ if for any $\chi, \chi_1\in C^\infty_0(D)$ with $\chi_1=1$
on some neighborhood of ${\rm Supp\,}\chi$, we have
\[\big(\chi A_k(1-\chi_1)\big)\Pi_{k,\leq\lambda}\big(\chi A_k(1-\chi_1)\big)^*=
O(k^{-\infty})\ \ \mbox{ on $D$},\]
where $\big(\chi A_k(1-\chi_1)\big)^*:L^2(X,L^k)\To L^2(X,L^k)$
is the Hilbert space adjoint of $\chi A_k(1-\chi_1)$ with respect to $(\,\cdot\,|\,\cdot\,)$.
\end{defn}

Fix  $\delta>0$ and let $F_{k,\delta}$ be as in \eqref{e-gue150807I}.

\begin{thm}[{\cite[Theorem 1.13]{Hsiao14}}]\label{t-gue150811}
With the notations and assumptions used above,
let $s$ be a local rigid CR frame of $L$ on a canonical coordinate patch $D\Subset X$
with canonical coordinates $x=(z,\theta)=(x_1,\ldots,x_{2n-1})$,
$\abs{s}^2_{h}=e^{-2\Phi}$. Let $\delta>0$ be a small constant so that
$R^L_{x}-2s\mathcal{L}_{x}$ is positive definite, for every $x\in X$ and
$\abs{s}\leq\delta$. Let $F_{k,\delta}$ be as in \eqref{e-gue150807I} and
let $F_{k, \delta,s}$ be the localized operator of $F_{k,\delta}$ given by \eqref{e-gue140824II}.
Assume that:

{\rm (I)\,} $\Box^{(0)}_{b,k}$ has $O(k^{-n_0})$ small spectral gap on $D$ with respect to $F_{k,\delta}$.

{\rm (II)\,} $\Pi_{k,\leq\delta k}$ is $k$-negligible away the diagonal with
respect to $F_{k,\delta}$ on $D$.

{\rm (III)\,} $F_{k, \delta,s}-B_k=O(k^{-\infty}):
H^s_{{\rm comp\,}}(D)\To H^s_{{\rm loc\,}}(D)$, $\forall s\in\mathbb N_0$, where
\[B_k=\frac{k^{2n-1}}{(2\pi)^{2n-1}}\int e^{ik\langle x-y,\eta\rangle }\alpha(x,\eta,k)d\eta
+ O(k^{-\infty})\]
is a classical semi-classical pseudodifferential operator on $D$ of order $0$ with
\[\begin{split}&\mbox{$\alpha(x,\eta,k)\sim\sum_{j=0}^\infty\alpha_j(x,\eta)k^{-j}$
in $S^0_{{\rm loc\,}}(1;T^*D)$},\\
&\alpha_j(x,\eta)\in C^\infty(T^*D),\ \ j=0,1,\ldots,
\end{split}\]
and for every $(x,\eta)\in T^*D$, $\alpha(x,\eta,k)=0$
if $\big|\langle\,\eta\,|\,\omega_0(x)\,\rangle\big|> \delta$. Fix $D_0\Subset D$. Then
\begin{equation}\label{c1}
P_{k,\delta,s}(x,y)=\int e^{ik\varphi(x,y,t)}g(x,y,t,k)dt+O(k^{-\infty})\:\:
\text{on $D_0\times D_0$},
\end{equation}
where $\varphi(x,y,t)\in C^\infty( D\times D\times(-\delta,\delta))$ is as in \eqref{e-gue150807b} and
\[\begin{split}
&g(x,y,t,k)\in S^{n}_{{\rm loc\,}}(1;D\times D\times(-\delta,\delta))\cap C^\infty_0(D\times D\times(-\delta,\delta)),\\
&g(x,y,t,k)\sim\sum^\infty_{j=0}g_j(x,y,t)k^{n-j}\text{ in }S^{n}_{{\rm loc\,}}(1;D\times D\times(-\delta,\delta))
\end{split}\]
is as in \eqref{e-gue150807bI}, where $P_{k,\delta,s}$ is given by \eqref{e-gue150806VII}.
\end{thm}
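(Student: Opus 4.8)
The plan is to reduce the global operator $P_{k,\delta}=F_{k,\delta}\circ\Pi_{k,\le k\delta}\circ F_{k,\delta}$ to a local model on the BRT patch $D$ and to match it, modulo $O(k^{-\infty})$, against an explicit complex Fourier integral operator of the asserted form; I write $\equiv$ for equality modulo $O(k^{-\infty})$. \emph{First}, I would reduce to Fourier components of the Szeg\H{o} projector. Since $\ddbar_b$, hence $\Box^{(0)}_{b,k}$, commutes with $T$, the Fourier projections $Q^{(0)}_{m,k}$ commute with the orthogonal projection $\Pi_k$ onto $\Ker\Box^{(0)}_{b,k}$. As $\Ker\Box^{(0)}_{b,k}$ is the space of $L^2$ CR sections and transversal ellipticity of $\ddbar_b$ on each Fourier component forces these to split into the smooth spaces $\mathcal H^0_{b,m}(X,L^k)$, we get $\Pi_{k,\le k\delta}=\big(\sum_{|m|\le k\delta}Q^{(0)}_{m,k}\big)\Pi_k$, while $F_{k,\delta}\big(\sum_{|m|\le k\delta}Q^{(0)}_{m,k}\big)=F_{k,\delta}$ since $\tau_\delta$ is supported in $(-\delta,\delta)$. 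Hence
\[P_{k,\delta}=F_{k,\delta}\Pi_k F_{k,\delta}=\sum_{m\in\mathbb Z}\tau_\delta\!\big(\tfrac mk\big)^2\Pi_{k,m},\qquad \Pi_{k,m}:=\Pi_k Q^{(0)}_{m,k},\]
the orthogonal projection onto $\mathcal H^0_{b,m}(X,L^k)$; this already foreshadows the factor $|\tau_\delta(t)|^2$ and the integration over $t\leftrightarrow m/k$. In particular $P_{k,\delta}=A_kA_k^*$ with $A_k=F_{k,\delta}\Pi_{k,\le k\delta}$, the shape to which hypothesis (II) applies.

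\emph{Next}, I would localize. Fix $\chi,\chi_1\in C^\infty_0(D)$ with $\chi\equiv 1$ near $\overline{D_0}$ and $\chi_1\equiv 1$ near $\mathrm{supp}\,\chi$. On $D_0\times D_0$ one has $P_{k,\delta}=(\chi A_k)(\chi A_k)^*$; writing $\chi A_k=\chi F_{k,\delta}\chi_1\Pi_{k,\le k\delta}+R_k$ with $R_k=\chi F_{k,\delta}(1-\chi_1)\Pi_{k,\le k\delta}$, hypothesis (II) gives $R_kR_k^*=O(k^{-\infty})$, whence by Cauchy--Schwarz (using $\|F_{k,\delta}\|\le1$, $\|\Pi_{k,\le k\delta}\|\le1$, and that (II) provides kernel bounds to all orders) the cross terms are $O(k^{-\infty})$ as well, so $P_{k,\delta}\equiv\chi F_{k,\delta}\chi_1\,\Pi_{k,\le k\delta}\,\chi_1 F_{k,\delta}\chi$ on $D_0\times D_0$. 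Conjugating by the isometry $U_{k,s}$ (which commutes with multiplication operators) and invoking hypothesis (III), $\chi F_{k,\delta,s}\chi_1=\chi B_k\chi_1+O(k^{-\infty})$, this yields
\[P_{k,\delta,s}\equiv (\chi B_k\chi_1)\,\Pi_{k,\le k\delta,s}\,(\chi_1 B_k\chi),\qquad \Pi_{k,\le k\delta,s}:=U_{k,s}^{-1}\Pi_{k,\le k\delta}U_{k,s},\]
on $D_0\times D_0$. Since $B_k\equiv F_{k,\delta,s}$, its principal symbol is $\alpha_0(x,\eta)=\tau_\delta(\langle\eta,\omega_0(x)\rangle)$, a microlocal cutoff to $\{|\langle\eta,\omega_0(x)\rangle|\le\delta\}$, and matters are reduced to the localized partial Szeg\H{o} projector there.

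\emph{Then} I would construct the complex Fourier integral parametrix. In BRT coordinates a mode-$m$ CR section, carried over by $U_{k,s}^{-1}$, is $e^{-k(\Phi-t\phi)}e^{im\theta}h(z)$ with $h$ holomorphic and $t=m/k$, living in the weighted space with weight $e^{-2k(\Phi-t\phi)}$, whose exponent is strictly plurisubharmonic exactly because $R^L_x-2t\mathcal L_x>0$ for $|t|\le\delta$ — the sole role of the hypothesis on $\delta$, and the mechanism absorbing the absence of a Levi curvature assumption on $X$. I would solve the eikonal equation of $\ddbar_{b,s,(k)}:=U_{k,s}^{-1}\ddbar_bU_{k,s}$ with $\partial_\theta$ replaced by the parameter $ikt$, producing $\varphi\in C^\infty(D\times D\times(-\delta,\delta))$ — roughly $\varphi(x,y,t)=t(\theta-\theta')+\tfrac1i\big(2\psi_t(z,\bar w)-\psi_t(z)-\psi_t(w)\big)$ with $\psi_t=\Phi-t\phi$ and $\psi_t(\,\cdot\,,\,\cdot\,)$ its almost-holomorphic polarization — which inherits from strict plurisubharmonicity of $\psi_t$ the properties in \eqref{e-gue150807b}: $\varphi$ and $\partial_t\varphi$ vanish to the correct order on the diagonal, $\mathrm{Im}\,\varphi\ge c|z-w|^2$, and $\mathrm{Im}\,\varphi+|\partial_t\varphi|^2\ge c|x-y|^2$ (the last because $\partial_t\varphi$ controls the $\theta$-direction). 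I would then solve the transport equations for $g(x,y,t,k)\sim\sum_j g_j(x,y,t)k^{n-j}$ so that $S_k:=\int e^{ik\varphi}g\,dt$ has, microlocally near $D_0$: columns annihilated to all orders by $\ddbar_{b,s,(k)}$, $S_k^*=S_k+O(k^{-\infty})$, $B_kS_k\equiv S_k\equiv S_kB_k$ (confining $t$ to $\mathrm{supp}\,\tau_\delta$ and yielding a factor $\tau_\delta(t)^2$ from the two copies of $B_k$), and the reproducing identity that characterizes $\sum_m\tau_\delta(m/k)^2\Pi_{k,m}$ on CR functions of mode $\approx tk$ (this pins the diagonal value of $g_0$).

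\emph{Finally}, hypothesis (I) would be used to identify $S_k$ with $P_{k,\delta,s}$: for $u$ supported in $D'\Subset D$ with $\ddbar_bu=O(k^{-\infty})$ one has $((\Box^{(0)}_{b,k})^pu,u)=O(k^{-\infty})$, hence $F_{k,\delta}(I-\Pi_k)u=O(k^{-\infty})$, so inside $F_{k,\delta}(\,\cdot\,)F_{k,\delta}$ an approximately CR section may be replaced by its Szeg\H{o} projection; combined with near self-adjointness and the reproducing identity this forces $P_{k,\delta,s}-S_k=O(k^{-\infty})$ on $D_0\times D_0$, which is \eqref{c1}. The leading coefficient is then computed by freezing coefficients at $p$ and passing to the Bargmann--Fock model for the weighted space with Hessian $\tfrac12(R^L_p-2t\mathcal L_p)$; the model Bergman density, multiplied by $\tau_\delta(t)^2$ and by the factor arising from the Fourier sum becoming the integral in $t$ (Poisson summation), and after tracking the normalizations ($dv_X$ and the unit length of $\omega_0$), gives $g_0(x,x,t)=(2\pi)^{-n}|\det(R^L_x-2t\mathcal L_x)|\,|\tau_\delta(t)|^2$, i.e.\ \eqref{e-gue150807a}. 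The step I expect to be the main obstacle is exactly this construction-and-identification: solving the complex eikonal so that $\varphi$ has precisely the curvature-dictated signature in \eqref{e-gue150807b} (impossible without $R^L-2t\mathcal L>0$), running the transport equations while simultaneously enforcing near self-adjointness, $B_k$-invariance and the reproducing property, and above all deducing from the small spectral gap (I) that this formal parametrix is genuinely the true kernel modulo $O(k^{-\infty})$ — the point where the $S^1$-action and the positive line bundle compensate for $\Box^{(0)}_{b,k}$ possibly having non-closed range.
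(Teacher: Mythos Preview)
The paper does not contain a proof of this theorem: it is stated with the attribution \cite[Theorem~1.13]{Hsiao14} and used as a black box. The paper's contribution is precisely to verify, in the present setting, the three hypotheses (I), (II), (III) --- this is the content of Theorem~\ref{t-gue150816}, Lemma~\ref{l-gue131209}, and Lemma~\ref{l-gue150813} --- and then to invoke the cited result. So there is no ``paper's own proof'' to compare your attempt against.

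That said, your sketch is a reasonable outline of the strategy one expects in \cite{Hsiao14}: reduce $P_{k,\delta}$ to a weighted sum of Fourier--Szeg\H{o} components, localize via hypothesis (II), replace $F_{k,\delta,s}$ by the semiclassical pseudodifferential cutoff $B_k$ via (III), construct a complex-phase oscillatory parametrix $S_k$ by solving eikonal and transport equations (the positivity of $R^L-2t\mathcal L$ furnishing the needed strict plurisubharmonicity of $\Phi-t\phi$), and finally use the small spectral gap (I) to show $P_{k,\delta,s}\equiv S_k$. The identification step you flag as the main difficulty --- passing from ``$\Box^{(0)}_{b,k}$-approximate projector'' to ``equals the true projector mod $O(k^{-\infty})$'' --- is indeed where (I) does the real work, and your heuristic that $F_{k,\delta}(I-\Pi_k)u=O(k^{-\infty})$ for approximately CR $u$ is the right mechanism. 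One point to be careful with: your Cauchy--Schwarz argument for killing the cross terms $R_k(\chi F_{k,\delta}\chi_1\Pi_{k,\le k\delta})^*$ from hypothesis (II) alone is a bit glib --- (II) gives $R_kR_k^*=O(k^{-\infty})$ as an \emph{operator with smooth kernel}, and converting this into $C^\infty$ bounds on the cross-term kernels requires an additional a~priori polynomial bound on the other factor (of the sort \eqref{e-gue150813} provides); this is routine but should be said. Otherwise your roadmap matches the expected architecture.
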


In view of Theorem~\ref{t-gue150811}, we see that to prove
Theorem~\ref{t-gue150807}, we only need to prove  that
{\rm (I)\,},  {\rm (II)\,} and  {\rm (III)\,} in Theorem~\ref{t-gue150811}
hold if $\delta>0$ is small enough. Until further notice,
we fix $\delta>0$ small enough so that $R^L_{x}-2s\mathcal{L}_{x}$
is positive definite for every $x\in X$ and $\abs{s}\leq\delta$.
We first prove {\rm (I)\,} in Theorem~\ref{t-gue150811} holds.

\subsection{Small spectral gap of the Kohn Laplacian}\label{s-gue150811h}

For $m\in\mathbb Z$, let
\begin{equation}\label{e-gue131207III}
Q^{(q)}_{m,k}:L^2(X,T^{*0,q}X\otimes L^k)\To L^2_m(X,T^{*0,q}X\otimes L^k)
\end{equation}
be the orthogonal projection with respect to $(\,\cdot\,|\,\cdot\,)$.
Let $\tau_\delta\in C^\infty_0((-\delta,\delta))$ be as in \eqref{e-gue150807}.
Similar to \eqref{e-gue150807I}, let
$F^{(q)}_{k, \delta}$
be the continuous operator given by
\begin{equation}\label{e-gue131207IV}
F^{(q)}_{k, \delta}:L^2(X,T^{*0,q}X\otimes L^k)\To L^2(X,T^{*0,q}X\otimes L^k),\quad
u\longmapsto\sum_{m\in\mathbb Z}\tau_\delta\Big(\frac{m}{k}\Big)Q^{(q)}_{m,k}u\,.
\end{equation}
Note that $F_{k,\delta}=F^{(0)}_{k, \delta}$. It is not difficult to see that for every $m\in\mathbb Z$, we have
\begin{equation}\label{e-gue131207V}
\begin{split}
&\norm{TQ^{(q)}_{m,k}u}=\abs{m}\norm{Q^{(q)}_{m,k}u},\ \ \forall u\in L^2(X,T^{*0,q}X\otimes L^k),\\
&\norm{TF^{(q)}_{k, \delta}u}\leq k\delta\norm{F^{(q)}_{k, \delta}u},\ \ \forall u\in L^2(X,T^{*0,q}X\otimes L^k),
\end{split}
\end{equation}
and
\begin{equation}\label{e-gue131207V-I}
\begin{split}
&Q^{(q)}_{m,k}:\Omega^{0,q}(X,L^k)\To\Omega^{0,q}_m(X,L^k),\\
&F^{(q)}_{k, \delta}:\Omega^{0,q}(X,L^k)\To
\bigoplus_{m\in\mathbb Z\cap[-k\delta,k\delta]}\Omega^{0,q}_m(X,L^k).
\end{split}
\end{equation}
Since the Hermitian metrics $\langle\,\cdot\,|\,\cdot\,\rangle$ and $h^{k}$ are all rigid,
it follows as in \cite[Section 5]{Hsiao12}:
\begin{equation}\label{e-gue131207VI}
\begin{split}
&\mbox{$\Box^{(q)}_{b,k}Q^{(q)}_{m,k}=Q^{(q)}_{m,k}\Box^{(q)}_{b,k}$ on $\Omega^{0,q}(X,L^k)$, $\forall m\in\mathbb Z$},\\
&\mbox{$\Box^{(q)}_{b,k}F^{(q)}_{k, \delta}=F^{(q)}_{k, \delta}\Box^{(q)}_{b,k}$ on $\Omega^{0,q}(X,L^k)$},\\
&\mbox{$\ddbar_{b}Q^{(q)}_{m,k}=Q^{(q+1)}_{m,k}\ddbar_{b}$ on $\Omega^{0,q}(X,L^k)$, $\forall m\in\mathbb Z$, $q=0,1,\ldots,n-2$},\\
&\mbox{$\ddbar_{b}F^{(q)}_{k, \delta}=F^{(q+1)}_{k, \delta}\ddbar_{b}$ on $\Omega^{0,q}(X,L^k)$, $q=0,1,\ldots,n-2$},\\
&\mbox{$\ddbar^*_{b}Q^{(q)}_{m,k}=Q^{(q-1)}_{m,k}\ddbar^*_{b}$ on $\Omega^{0,q}(X,L^k)$, $\forall m\in\mathbb Z$, $q=1,\ldots,n-1$},\\
&\mbox{$\ddbar^*_{b}F^{(q)}_{k, \delta}=F^{(q-1)}_{k, \delta}\ddbar^*_{b}$ on $\Omega^{0,q}(X,L^k)$, $q=1,\ldots,n-1$}.
\end{split}
\end{equation}
By elementary Fourier analysis, it is straightforward to see that for every $u\in\Omega^{0,q}(X,L^k)$,
\begin{equation}\label{e-gue131207VII}
\begin{split}
&\mbox{$\lim\limits_{N\To\infty}\sum\limits^N_{m=-N}Q^{(q)}_{m,k}u\To u$ in $C^\infty$ Topology},\\
&\sum^N_{m=-N}\norm{Q^{(q)}_{m,k}u}^2\leq\norm{u}^2,\ \ \forall N\in\mathbb N_0.
\end{split}
\end{equation}
Thus, for every $u\in L^2(X,T^{*0,q}X\otimes L^k)$,
\begin{equation}\label{e-gue131207VIII}
\begin{split}
&\mbox{$\lim\limits_{N\To\infty}\sum\limits^N_{m=-N}Q^{(q)}_{m,k}u\To u$ in $L^2(X,T^{*0,q}X\otimes L^k)$},\\
&\sum^N_{m=-N}\norm{Q^{(q)}_{m,k}u}^2\leq\norm{u}^2,\ \ \forall N\in\mathbb N_0.
\end{split}
\end{equation}
We will use the following result.
\begin{thm}[{\cite[Theorem 9.4]{Hsiao14}}]\label{t-gue131207I}
With the assumptions and notations above, let $q\geq1$. If $\delta>0$ is small enough, then for every $u\in\Omega^{0,q}(X,L^k)$, we have
\begin{equation}\label{e-gue131207aI}
\big\|\Box^{(q)}_{b,k}F^{(q)}_{k, \delta}u\big\|^2\geq
c_1k^2\big\|F^{(q)}_{k, \delta} u\big\|^2,
\end{equation}
where $c_1>0$ is a constant independent of $k$ and $u$.
\end{thm}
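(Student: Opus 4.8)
The plan is to reduce \eqref{e-gue131207aI} to a lower bound on the quadratic form of $\Box^{(q)}_{b,k}$ along the Fourier band $\abs{m}\le k\delta$. Since $X$ is compact, integration by parts gives $(\Box^{(q)}_{b,k}v\,|\,v)=\norm{\ddbar_bv}^2+\norm{\ddbar^*_bv}^2$, and Cauchy--Schwarz gives $\norm{\Box^{(q)}_{b,k}v}\,\norm{v}\ge(\Box^{(q)}_{b,k}v\,|\,v)$; hence \eqref{e-gue131207aI} will follow once we show
\[\norm{\ddbar_bv}^2+\norm{\ddbar^*_bv}^2\ge c\,k\norm{v}^2,\qquad v=F^{(q)}_{k,\delta}u,\]
with $c>0$ independent of $k$ and $u$. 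Because $\tau_\delta$ is supported in $(-\delta,\delta)$ and the spaces $\Omega^{0,q}_m(X,L^k)$ are mutually orthogonal and invariant under $\ddbar_b$ and $\ddbar^*_b$ (cf.\ \eqref{e-gue131207VI}, \eqref{e-gue131207VIII}), it suffices in turn to prove that for every $m\in\mathbb Z$ with $\abs{m}\le k\delta$ and every $w\in\Omega^{0,q}_m(X,L^k)$,
\[\norm{\ddbar_bw}^2+\norm{\ddbar^*_bw}^2\ge c\,k\norm{w}^2,\]
with $c$ uniform in $m$ in this range and in $k$. Both the hypothesis $q\ge1$ and the positivity of $L$ will be used here.

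The second step is a local estimate on a BRT trivialization $(D,(z,\theta),\phi)$ carrying a rigid CR frame $s$ of $L$ with $\abs{s}^2_h=e^{-2\Phi}$ and $T\Phi=0$. For $w\in\Omega^{0,q}_m(X,L^k)$ supported in $D$ we have $Tw=imw$, so in this trivialization $w=e^{im\theta}\,\widehat w(z)\otimes s^k$ with $\widehat w$ a $(0,q)$-form depending only on $z$; setting $\widetilde w:=e^{-m\phi}\widehat w$ and using \eqref{e-gue150514f}, $\ddbar_bw$ corresponds to $e^{im\theta}e^{m\phi}\,\ddbar\widetilde w\otimes s^k$, where $\ddbar$ is the usual Cauchy--Riemann operator on $\set{\abs{z}<\eta}\subset\Complex^{n-1}$. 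One checks that $\norm{w}^2$, $\norm{\ddbar_bw}^2$ and $\norm{\ddbar^*_bw}^2$ agree, up to zeroth-order terms with coefficients independent of $k$ and $m$, with the weighted norms $\norm{\widetilde w}^2_\psi$, $\norm{\ddbar\widetilde w}^2_\psi$ and $\norm{\ddbar^*_\psi\widetilde w}^2_\psi$ associated to the weight $\psi=2k\Phi-2m\phi$ on $\set{\abs{z}<\eta}$. Its curvature $\pr\ddbar\psi$ is $k\bigl(R^L-\tfrac{2m}{k}\mathcal L\bigr)$ up to a fixed positive constant, and by the choice of $\delta$ (cf.\ Theorem~\ref{t-gue150811}) the Hermitian form $R^L-2\sigma\mathcal L$ is positive definite for all $\abs{\sigma}\le\delta$, uniformly on $X$; hence $\pr\ddbar\psi\ge c_0k$ with $c_0>0$ independent of $k$ and of $m$ in the band. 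The Morrey--Kohn--H\"ormander inequality for compactly supported $(0,q)$-forms on a domain in $\Complex^{n-1}$, $q\ge1$, then gives $\norm{\ddbar\alpha}^2_\psi+\norm{\ddbar^*_\psi\alpha}^2_\psi\ge q\,c_0k\norm{\alpha}^2_\psi\ge c_0k\norm{\alpha}^2_\psi$. Transporting this estimate back yields, for every $\chi\in C^\infty_0(D)$,
\[\norm{\ddbar_b(\chi w)}^2+\norm{\ddbar^*_b(\chi w)}^2\ge(c_2k-C_1)\norm{\chi w}^2,\]
with $c_2,C_1>0$ independent of $k$ and $m$.

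The third step globalizes: fix a finite cover of $X$ by BRT trivializations $(D_\alpha,(z,\theta),\phi_\alpha)$ with rigid CR frames of $L$, and a subordinate partition of unity $\set{\chi_\alpha}$, $\chi_\alpha\in C^\infty_0(D_\alpha)$, with $\sum_\alpha\chi_\alpha^2=1$ and $T\chi_\alpha=0$ (obtained by averaging over $S^1$). For $w\in\Omega^{0,q}_m(X,L^k)$ we have $\norm{w}^2=\sum_\alpha\norm{\chi_\alpha w}^2$, and since $\ddbar_b(\chi_\alpha w)-\chi_\alpha\ddbar_bw$ and $\ddbar^*_b(\chi_\alpha w)-\chi_\alpha\ddbar^*_bw$ are of order zero in $w$ with $k$-independent coefficients, expanding the squares gives $\sum_\alpha\bigl(\norm{\ddbar_b(\chi_\alpha w)}^2+\norm{\ddbar^*_b(\chi_\alpha w)}^2\bigr)\le2\bigl(\norm{\ddbar_bw}^2+\norm{\ddbar^*_bw}^2\bigr)+C_2\norm{w}^2$. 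Combining this with the local estimates of the second step, $(c_2k-C_1-C_2)\norm{w}^2\le2\bigl(\norm{\ddbar_bw}^2+\norm{\ddbar^*_bw}^2\bigr)$, so $\norm{\ddbar_bw}^2+\norm{\ddbar^*_bw}^2\ge\tfrac{c_2}{4}k\norm{w}^2$ as soon as $k\ge2(C_1+C_2)/c_2$, uniformly in $m$ with $\abs{m}\le k\delta$. Summing over the Fourier components of $v=F^{(q)}_{k,\delta}u$ and using orthogonality gives $(\Box^{(q)}_{b,k}v\,|\,v)\ge\tfrac{c_2}{4}k\norm{v}^2$, whence \eqref{e-gue131207aI} holds with $c_1=c_2^2/16$.

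The step I expect to be the main obstacle is the uniform control of the lower-order errors in the second and third steps: the discrepancy between the CR $L^2$ structure and the model weighted $\ddbar$-structure on each patch, the contribution of the torsion and of the curvature of the canonical line bundle $\det T^{*1,0}X$ (which is why the correct sign of the curvature term on $(0,q)$-forms is the one coming from the trivialization of the canonical bundle on a BRT patch), and the commutator terms produced by the partition of unity. All of these must be shown to be $O(\norm{w}^2)$ with constants independent both of $k$ and of $m$ in the band $\abs{m}\le k\delta$, so that they are absorbed by the gain $c_2k\norm{w}^2$ once $k$ is large. This is exactly where the combination of the $S^1$-symmetry, the positivity of $L$, and the restriction $\abs{m}\le k\delta$ is essential: for $q=0$ the statement fails because of the infinite-dimensional kernel of $\ddbar_b$, and without positivity of $R^L$ the twisted curvature $R^L-\tfrac{2m}{k}\mathcal L$ need not be positive definite.
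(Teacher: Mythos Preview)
The paper does not prove this theorem; it is quoted from \cite[Theorem~9.4]{Hsiao14} and used as a black box. Your strategy---reduce to each Fourier component $\Omega^{0,q}_m(X,L^k)$, pass via a BRT trivialization to a weighted $\ddbar$-problem on a domain in $\Complex^{n-1}$ with weight $\psi=2k\Phi-2m\phi$ whose curvature is $k\bigl(R^L-\tfrac{2m}{k}\mathcal L\bigr)\gtrsim k$, apply the Bochner--Kodaira/Morrey--Kohn--H\"ormander inequality for $q\ge1$, and patch---is exactly the standard one and is what is carried out in the cited reference.

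One point needs repair. You write ``$\chi_\alpha\in C^\infty_0(D_\alpha)$ with $T\chi_\alpha=0$ (obtained by averaging over $S^1$)'', but a BRT patch $D_\alpha\simeq W_\alpha\times(-\varepsilon_\alpha,\varepsilon_\alpha)$ is \emph{not} $S^1$-invariant, and a $T$-invariant function compactly supported in $D_\alpha$ is independent of $\theta$ yet must vanish near $\theta=\pm\varepsilon_\alpha$, hence is identically zero. For the same reason a nonzero $w\in\Omega^{0,q}_m(X,L^k)$ cannot be supported in a single BRT patch, so your Step~2 hypothesis ``$w$ supported in $D$'' is vacuous as stated. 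The fix is to localize only transversally: take rigid cutoffs $\chi_\alpha\in C^\infty(X)$ (globally $S^1$-invariant, not compactly supported in $D_\alpha$) which on each BRT patch depend only on $z$, with $\sum_\alpha\chi_\alpha^2=1$. Then $\chi_\alpha w\in\Omega^{0,q}_m(X,L^k)$, and since the metrics $\langle\,\cdot\,|\,\cdot\,\rangle$ and $h$ are rigid, the pointwise norms $\abs{\chi_\alpha w}^2_{h^k}$, $\abs{\ddbar_b(\chi_\alpha w)}^2_{h^k}$, $\abs{\ddbar^*_b(\chi_\alpha w)}^2_{h^k}$ are all $S^1$-invariant; the $L^2$ norms over $X$ then reduce (up to the orbit-length factor) to integrals over the transverse slices $W_\alpha\subset\Complex^{n-1}$, where your H\"ormander estimate with weight $\psi$ applies directly to $\chi_\alpha(z)\widetilde w(z)$. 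With this adjustment the rest of your argument goes through: the commutator and zeroth-order discrepancy terms you flag in the last paragraph are indeed $O(\norm{w}^2)$ uniformly in $k$ and in $\abs{m}\le k\delta$, since neither the rigid metrics nor the cutoffs carry any $k$-dependence.
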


Now, we assume that $\delta>0$ is small enough so that \eqref{e-gue131207aI} holds. For $q=0,1,\ldots,n-1$, put
\begin{equation}\label{e-gue150811y}
\begin{split}
\Omega^{0,q}_{\leq k\delta}(X, L^k)&:=
\bigoplus_{\substack{m\in\mathbb Z\\\abs{m}\leq k\delta}}\Omega^{0,q}_m(X,L^k),\\
L^2_{\leq k\delta}(X,T^{*0,q}X\otimes L^k)&:=
\bigoplus_{\substack{m\in\mathbb Z\\\abs{m}\leq k\delta}}L^2_m(X,T^{*0,q}X\otimes L^k).
\end{split}
\end{equation}
We write $C^\infty_{\leq k\delta}(X,L^k):=\Omega^{0,0}_{\leq k\delta}(X, L^k)$, $L^2_{\leq k\delta}(X,L^k):=L^2_{\leq k\delta}(X, T^{*0,0}X\otimes L^k)$. It is clear that
\[\Box^{(q)}_{b,k}:\Omega^{0,q}_{\leq k\delta}(X,T^{*0,q}X\otimes L^k)\To \Omega^{0,q}_{\leq k\delta}(X,T^{*0,q}X\otimes L^k).\]
We will write $\Box^{(q)}_{b,\leq k\delta}$ to denote the restriction of $\Box^{(q)}_{b,k}$ on the space $\Omega^{0,q}_{\leq k\delta}(X, L^k)$. We extend $\Box^{(q)}_{b,\leq k\delta}$ to $L^2_{\leq k\delta}(X,T^{*0,q}X\otimes L^k)$ by
\begin{equation}\label{e-gue150517III}
\Box^{(q)}_{b,\leq k\delta}:{\rm Dom\,}\Box^{(q)}_{b,\leq k\delta}\subset L^2_{\leq k\delta}(X,T^{*0,q}X\otimes L^k)\To L^2_{\leq k\delta}(X,T^{*0,q}X\otimes L^k)\,,
\end{equation}
with ${\rm Dom\,}\Box^{(q)}_{b,\leq k\delta}:=\{u\in L^2_{\leq k\delta}(X,T^{*0,q}X\otimes L^k);\, \Box^{(q)}_{b,\leq k\delta}u\in L^2_{\leq k\delta}(X,T^{*0,q}X\otimes L^k)\}$, where for any $u\in L^2_{\leq k\delta}(X,T^{*0,q}X\otimes L^k)$, $\Box^{(q)}_{b,\leq k\delta}u$ is defined in the sense of distributions.

\begin{lem}\label{l-gue150517}
We have ${\rm Dom\,}\Box^{(q)}_{b,\leq k\delta}=L^2_{\leq k\delta}(X,T^{*0,q}X\otimes L^k)\bigcap H^2(X,T^{*0,q}X\otimes L^k)$.
\end{lem}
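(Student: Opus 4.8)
The statement is the equality of two subspaces of $L^2(X,T^{*0,q}X\otimes L^k)$, and the plan is to prove the two inclusions separately. The underlying mechanism is that the finite-mode truncation $\abs{m}\le k\delta$ makes the transversal derivative $T$ harmless in $L^2$, while the operator obtained from $\Box^{(q)}_{b,k}$ by restoring its one degenerate (transversal) direction is genuinely elliptic; combining the two regularity facts gives full $H^2$ regularity. For the first point, recall from \eqref{e-gue150811y} that $L^2_{\le k\delta}(X,T^{*0,q}X\otimes L^k)=\bigoplus_{\abs{m}\le k\delta}L^2_m(X,T^{*0,q}X\otimes L^k)$ is a \emph{finite} orthogonal sum, so every $u$ in it satisfies $Tu=\sum_{\abs{m}\le k\delta}im\,Q^{(q)}_{m,k}u$ and $T^2u=-\sum_{\abs{m}\le k\delta}m^2\,Q^{(q)}_{m,k}u$; in particular $Tu,T^2u\in L^2$ with $\norm{T^2u}\le(k\delta)^2\norm{u}$. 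This is the only place the bound $\abs{m}\le k\delta$ is used, and it is essential, since $\Box^{(q)}_{b,k}$ alone is far from elliptic.

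Next I would introduce the second order differential operator $\Box^{(q)}_{b,k}-T^2$ on $\Omega^{0,q}(X,L^k)$. Since the metrics $\langle\,\cdot\,|\,\cdot\,\rangle$ and $h^k$ are rigid and $T$ generates the $S^1$-action, which preserves $dv_X$, integration by parts on the compact manifold $X$ gives $(Tf\,|\,g)+(f\,|\,Tg)=0$, so $-T^2=T^*T\ge0$ and $\Box^{(q)}_{b,k}-T^2$ is a non-negative formally self-adjoint operator. Moreover $\Box^{(q)}_{b,k}-T^2$ is \emph{elliptic}: expressing $\ddbar_b$ and its formal adjoint in a BRT trivialization as in \eqref{e-gue150514f}, one computes that the principal symbol of $\Box^{(q)}_{b,k}$ at $\xi\in T^*_xX$ equals $\abs{\xi^{0,1}}^2$ times the identity of $T^{*0,q}_xX\otimes L^k_x$, where $\xi^{0,1}\in T^{*0,1}_xX$ is the $(0,1)$-component of $\xi$; this vanishes exactly when $\xi$ is a real multiple of $\omega_0(x)$ and is positive definite otherwise, while on the line $\Real\,\omega_0(x)$ one has $\langle\xi,T\rangle\ne0$ because $\omega_0(T)=-1$. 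Hence the principal symbol of $\Box^{(q)}_{b,k}-T^2$, which is $\bigl(\abs{\xi^{0,1}}^2+\langle\xi,T\rangle^2\bigr)$ times the identity, is positive definite on $T^*X\setminus\set{0}$.

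With these preparations the two inclusions are immediate. If $u\in{\rm Dom\,}\Box^{(q)}_{b,\le k\delta}$, then $u\in L^2_{\le k\delta}$ and the distribution $\Box^{(q)}_{b,k}u$ lies in $L^2$ by definition; together with $T^2u\in L^2$ from the first paragraph this gives $(\Box^{(q)}_{b,k}-T^2)u\in L^2$, so elliptic regularity for the order-$2$ elliptic operator $\Box^{(q)}_{b,k}-T^2$ on the compact manifold $X$ gives $u\in H^2(X,T^{*0,q}X\otimes L^k)$. Conversely, if $u\in L^2_{\le k\delta}\cap H^2$, then $\Box^{(q)}_{b,k}u\in L^2$ because $\Box^{(q)}_{b,k}$ has order $2$, and since $\Box^{(q)}_{b,k}$ commutes with the $S^1$-action (see \eqref{e-gue150517II}) it maps $L^2_{\le k\delta}\cap H^2$ into $L^2_{\le k\delta}$; hence $u\in{\rm Dom\,}\Box^{(q)}_{b,\le k\delta}$. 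The only point that is not routine is the ellipticity of $\Box^{(q)}_{b,k}-T^2$, i.e.\ the symbol computation identifying the characteristic variety of $\Box^{(q)}_{b,k}$ with $\bigcup_{x\in X}\Real\,\omega_0(x)$ and checking that $-T^2$ removes it; the mode estimate for $T$ and the global elliptic estimate on $X$ are standard.
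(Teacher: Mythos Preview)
Your proof is correct and follows essentially the same approach as the paper: both argue via the ellipticity of $\Box^{(q)}_{b,k}-T^2$ together with the mode bound $\norm{T^2u}\le(k\delta)^2\norm{u}$ to get $H^2$-regularity, and treat the reverse inclusion as routine. You provide more detail than the paper (the symbol computation for ellipticity, and the use of \eqref{e-gue150517II} to see that $\Box^{(q)}_{b,k}$ preserves $L^2_{\le k\delta}$), but the key mechanism is identical.
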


\begin{proof}
It is clearly that $L^2_{\leq k\delta}(X,T^{*0,q}X\otimes L^k)
\bigcap H^2(X,T^{*0,q}X\otimes L^k)\subset {\rm Dom\,}\Box^{(q)}_{b,\leq k\delta}$.
We only need to prove that ${\rm Dom\,}\Box^{(q)}_{b,\leq k\delta}\subset
L^2_{\leq k\delta}(X,T^{*0,q}X\otimes L^k)\bigcap H^2(X,T^{*0,q}X\otimes L^k)$.
Let $u\in{\rm Dom\,}\Box^{(q)}_{b,\leq k\delta}$.
Put $v=\Box^{(q)}_{b,\leq k\delta}u\in L^2_{\leq k\delta}(X,T^{*0,q}X\otimes L^k)$.
We have $(\Box^{(q)}_{b,\leq k\delta}-T^2)u=v-T^2u\in L^2(X,T^{*0,q}X\otimes L^k)$
since $\norm{T^2u}\leq k^2\delta^2\norm{u}$.
Since $(\Box^{(q)}_{b,\leq k\delta}-T^2)$ is elliptic, we have $u\in H^2(X,T^{*0,q}X\otimes L^k)$.
The lemma follows.
\end{proof}
\begin{thm}\label{t-gue150517a}
The operator $\Box^{(q)}_{b,\leq k\delta}$ defined in \eqref{e-gue150517III}
is self-adjoint.
\end{thm}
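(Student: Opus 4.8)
The plan is to show that $\Box^{(q)}_{b,\leq k\delta}$ is essentially self-adjoint by comparing it, via a bounded perturbation, to an honest self-adjoint operator. The key observation, already exploited in Lemma~\ref{l-gue150517}, is that on the finite-Fourier-band space $L^2_{\leq k\delta}(X,T^{*0,q}X\otimes L^k)$ the operator $T^2$ is \emph{bounded}: indeed $\norm{T^2u}\leq k^2\delta^2\norm{u}$ for every $u$ in this space, since $T$ acts as multiplication by $im$ on each Fourier component $L^2_m$ and $\abs{m}\leq k\delta$. Hence it suffices to prove that $\Box^{(q)}_{b,\leq k\delta}-T^2$ is self-adjoint on the same domain, because adding back the bounded self-adjoint operator $T^2$ does not change self-adjointness (the Kato--Rellich theorem, or simply the fact that a bounded symmetric perturbation of a self-adjoint operator is self-adjoint on the unchanged domain).

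First I would set up the functional-analytic framework: $\Box^{(q)}_{b,k}$ is the Gaffney-type self-adjoint extension of the Kohn Laplacian on $L^2(X,T^{*0,q}X\otimes L^k)$ with domain contained in $H^2$ by subelliptic/elliptic estimates in the relevant degrees; here, however, we work in the restricted space and the operator $A_k:=\Box^{(q)}_{b,\leq k\delta}-T^2$. By \eqref{e-gue150517II} and \eqref{e-gue131207VI}, $\Box^{(q)}_{b,k}$ commutes with each $Q^{(q)}_{m,k}$, so it preserves the band space, and $T^2$ obviously does as well; thus $A_k$ maps $\mathrm{Dom\,}\Box^{(q)}_{b,\leq k\delta}$ into $L^2_{\leq k\delta}$. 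The point of subtracting $T^2$ is that $\Box^{(q)}_{b,k}-T^2$ is an \emph{elliptic} second-order operator on the compact manifold $X$ (this is the standard fact that the Kohn Laplacian becomes elliptic once one adds the missing ``good'' direction $T^2$, cf.\ the proof of Lemma~\ref{l-gue150517}), hence it is essentially self-adjoint on $C^\infty$ as an operator on the full $L^2(X,T^{*0,q}X\otimes L^k)$, with domain exactly $H^2(X,T^{*0,q}X\otimes L^k)$.

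The main step is then to transfer this to the band space. Concretely: $A_k$ is symmetric on its domain (it is the difference of the symmetric operators $\Box^{(q)}_{b,\leq k\delta}$ and the bounded symmetric $T^2$). To see it is self-adjoint, take $u\in\mathrm{Dom\,}(A_k)^*$; then there is $w\in L^2_{\leq k\delta}$ with $(A_k v\,|\,u)=(v\,|\,w)$ for all $v$ in the domain. Extending $u$ and $w$ by zero to the orthogonal complement and using that $\Box^{(q)}_{b,k}-T^2$ commutes with the spectral projections $Q^{(q)}_{m,k}$, one checks that $u$ lies in the domain of the full elliptic operator $\Box^{(q)}_{b,k}-T^2$ on $L^2(X,T^{*0,q}X\otimes L^k)$ and that $(\Box^{(q)}_{b,k}-T^2)u=w\in L^2_{\leq k\delta}$; by elliptic regularity $u\in H^2$, and since $(\Box^{(q)}_{b,k}-T^2)$ commutes with $Q^{(q)}_{m,k}$ and $w$ is supported in the band, $u$ itself is supported in the band, so $u\in L^2_{\leq k\delta}\cap H^2=\mathrm{Dom\,}\Box^{(q)}_{b,\leq k\delta}$ by Lemma~\ref{l-gue150517}. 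Thus $A_k$ is self-adjoint, and adding the bounded self-adjoint $T^2$ (Kato--Rellich, with relative bound $0$) gives that $\Box^{(q)}_{b,\leq k\delta}=A_k+T^2$ is self-adjoint on $\mathrm{Dom\,}\Box^{(q)}_{b,\leq k\delta}$.

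The step I expect to be the main obstacle is the verification that the adjoint domain does not escape the band space — i.e., the commutation argument showing that if $(\Box^{(q)}_{b,k}-T^2)u$ lies in $L^2_{\leq k\delta}$ then $u$ does too. This requires knowing that the closed operator $\Box^{(q)}_{b,k}-T^2$ (with its natural self-adjoint/elliptic domain $H^2$) genuinely commutes with the orthogonal projections $Q^{(q)}_{m,k}$, not just formally on $\Omega^{0,q}$; this follows from \eqref{e-gue131207VI} together with the density of $\Omega^{0,q}_m(X,L^k)$ in $L^2_m$ and elliptic a priori estimates, but it must be stated carefully. Everything else — symmetry, the boundedness bound $\norm{T^2}\leq k^2\delta^2$ on the band, the Kato--Rellich reduction — is routine.
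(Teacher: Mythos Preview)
Your argument is correct, but it takes a noticeably longer route than the paper's. The paper exploits directly the fact that the domain of $\Box^{(q)}_{b,\leq k\delta}$ is \emph{defined} as the maximal (distributional) domain: if $v\in\mathrm{Dom}\,(\Box^{(q)}_{b,\leq k\delta})^*$, then by the very definition of the Hilbert-space adjoint one has $\Box^{(q)}_{b,\leq k\delta}v\in L^2_{\leq k\delta}$ in the sense of distributions (here the commutation with $Q^{(q)}_{m,k}$ is used implicitly, exactly the step you flagged as the ``obstacle''), so $v$ is already in $\mathrm{Dom}\,\Box^{(q)}_{b,\leq k\delta}$; symmetry then follows from Lemma~\ref{l-gue150517}, which identifies the domain with $L^2_{\leq k\delta}\cap H^2$ and allows the integration by parts $(\Box g\,|\,f)=(g\,|\,\Box f)$. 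No Kato--Rellich, no auxiliary operator $A_k$, no appeal to essential self-adjointness of elliptic operators on the full space is needed.

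Your approach introduces the splitting $\Box^{(q)}_{b,\leq k\delta}=A_k+T^2$ and the Kato--Rellich theorem, which is sound but superfluous: since the domain is already maximal, once you have done the commutation argument to push the distributional identity $(\Box-T^2)u=w$ from band-space test functions to all of $C^\infty$, you have essentially reproved what the paper gets in one line. What your version buys is explicitness---you spell out the commutation step that the paper hides under ``it is easy to see''---but at the cost of an unnecessary detour through ellipticity on the full $L^2$ and a bounded-perturbation theorem. One small redundancy: you argue that ``$u$ itself is supported in the band'', but $u\in\mathrm{Dom}(A_k)^*\subset L^2_{\leq k\delta}$ already, so this is automatic.
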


\begin{proof}
Let $(\Box^{(q)}_{b,\leq k\delta})^*:{\rm Dom\,}(\Box^{(q)}_{b,\leq k\delta})^*\subset L^2_{\leq k\delta}(X,T^{*0,q}X\otimes L^k)\To L^2_{\leq k\delta}(X,T^{*0,q}X\otimes L^k)$ be the Hilbert space adjoint of $\Box^{(q)}_{b,\leq k\delta}$.
Let $v\in{\rm Dom\,}(\Box^{(q)}_{b,\leq k\delta})^*$. Then, by definition of the Hilbert space adjoint of $\Box^{(q)}_{b,\leq k\delta}$, it is easy to see that $\Box^{(q)}_{b,\leq k\delta}v\in L^2_{\leq k\delta}(X,T^{*0,q}X\otimes L^k)$ and hence $v\in{\rm Dom\,}\Box^{(q)}_{b,\leq k\delta}$ and $\Box^{(q)}_{b,\leq k\delta}v=(\Box^{(q)}_{b,\leq k\delta})^*v$.

From Lemma~\ref{l-gue150517}, we can check that
\begin{equation}\label{e-gue150517fa}
\big(\,\Box^{(q)}_{b,\leq k\delta}g\,|\,f\,\big)=\big(\,g\,|\,\Box^{(q)}_{b,\leq k\delta}f\,\big),
\ \ \forall g,f\in{\rm Dom\,}\Box^{(q)}_{b,\leq k\delta}.
\end{equation}
From \eqref{e-gue150517fa}, we deduce that ${\rm Dom\,}\Box^{(q)}_{b,\leq k\delta}\subset{\rm Dom\,}(\Box^{(q)}_{b,\leq k\delta})^*$ and $\Box^{(q)}_{b,\leq k\delta}u=(\Box^{(q)}_{b,\leq k\delta})^*u$, for all $u\in{\rm Dom\,}\Box^{(q)}_{b,\leq k\delta}$. The theorem follows.
\end{proof}

Let ${\rm Spec\,}\Box^{(q)}_{b,\leq k\delta}\subset[0,\infty[$ denote the spectrum of $\Box^{(q)}_{b,\leq k\delta}$. For any $\lambda>0$, put
\[\begin{split}
&\Pi^{(q)}_{k,\leq k\delta,\leq\lambda}:=E^{(q)}_{\leq k\delta}([0,\lambda]),\\
&\Pi^{(q)}_{k,\leq k\delta,>\lambda}:=E^{(q)}_{\leq k\delta}(]\lambda,\infty[),
\end{split}\]
where $E^{(q)}_{\leq k\delta}$ denotes the spectral measure for
$\Box^{(q)}_{b,\leq k\delta}$.
We write $\Pi_{k,\leq k\delta,\leq\lambda}:=\Pi^{(0)}_{k,\leq k\delta,\leq\lambda}$,
$\Pi_{k,\leq k\delta,>\lambda}:=\Pi^{(0)}_{k,\leq k\delta,>\lambda}$.
\begin{thm}\label{t-gue150517aI}
${\rm Spec\,}\Box^{(q)}_{b,\leq k\delta}$ is a discrete subset of
$[0,\infty[$ , for any $\nu\in{\rm Spec\,}\Box^{(q)}_{b,\leq k\delta}$, $\nu$
is an eigenvalue of $\Box^{(q)}_{b,\leq k\delta}$ and the eigenspace
\[\mathcal{E}^q_{\leq k\delta,\nu}(X,L^k)
:=\big\{u\in{\rm Dom\,}\Box^{(q)}_{b,\leq k\delta};\, \Box^{(q)}_{b,\leq k\delta}u=\nu u\big\}\]
is finite dimensional with
$\mathcal{E}^q_{\leq k\delta,\nu}(X,L^k)\subset\Omega^{0,q}_{\leq k\delta}(X,L^k)$.
\end{thm}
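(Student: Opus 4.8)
The plan is to deduce the spectral properties of $\Box^{(q)}_{b,\leq k\delta}$ from those of a genuinely elliptic operator on $X$. The key observation, already exploited in the proof of Lemma~\ref{l-gue150517}, is that on $L^2_{\leq k\delta}(X,T^{*0,q}X\otimes L^k)$ the operator $T^2$ is bounded (with norm $\leq k^2\delta^2$) and commutes with $\Box^{(q)}_{b,k}$, so that $\Box^{(q)}_{b,\leq k\delta}-T^2$ is, up to a bounded self-adjoint perturbation, the same operator, and it is elliptic of order $2$ on all of $X$. First I would recall from \eqref{e-gue150517II} that $\Box^{(q)}_{b,k}$ preserves each Fourier component $\Omega^{0,q}_m(X,L^k)$ and hence the finite-truncation sum $\Omega^{0,q}_{\leq k\delta}(X,L^k)$, and that, by Theorem~\ref{t-gue150517a}, $\Box^{(q)}_{b,\leq k\delta}$ is self-adjoint with domain described by Lemma~\ref{l-gue150517}. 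Then I would introduce the auxiliary operator $P:=\Box^{(q)}_{b,k}-T^2$ acting on $\Omega^{0,q}(X,L^k)$; this is a second-order self-adjoint elliptic (indeed, its principal symbol is that of a Laplacian, being $\Box^{(q)}_{b,k}$ plus a positive contribution in the missing Reeb direction) differential operator on the compact manifold $X$. By standard elliptic theory, $P$ has discrete spectrum, each spectral value is an eigenvalue of finite multiplicity, and the eigensections are smooth.

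The main step is to transfer this to $\Box^{(q)}_{b,\leq k\delta}$. Since $T$ commutes with $\Box^{(q)}_{b,k}$ and with $P$, the operator $P$ also preserves each $\Omega^{0,q}_m(X,L^k)$, and on $\Omega^{0,q}_m(X,L^k)$ we have $P=\Box^{(q)}_{b,k}+m^2$. Restricting to $\Omega^{0,q}_{\leq k\delta}(X,L^k)=\bigoplus_{|m|\leq k\delta}\Omega^{0,q}_m(X,L^k)$, the operator $\Box^{(q)}_{b,\leq k\delta}$ and $P|_{L^2_{\leq k\delta}}$ differ by the bounded, block-diagonal, self-adjoint operator $\sum_{|m|\leq k\delta} m^2 Q^{(q)}_{m,k}$; hence $\Box^{(q)}_{b,\leq k\delta}=(P|_{L^2_{\leq k\delta}}) - (\text{bounded self-adjoint})$. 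Because $P$ has compact resolvent on $L^2(X,T^{*0,q}X\otimes L^k)$ and $L^2_{\leq k\delta}(X,T^{*0,q}X\otimes L^k)$ is a closed $P$-invariant subspace (by Lemma~\ref{l-gue150517} the domain of $\Box^{(q)}_{b,\leq k\delta}$ is exactly $L^2_{\leq k\delta}\cap H^2$, which is $P$-invariant and on which $P$ agrees with the restriction), the restriction $P|_{L^2_{\leq k\delta}}$ also has compact resolvent, hence discrete spectrum consisting of finite-multiplicity eigenvalues. Adding the bounded block-diagonal perturbation $-\sum_{|m|\leq k\delta} m^2 Q^{(q)}_{m,k}$, which merely shifts eigenvalues inside each finite-dimensional Fourier block $\mathcal{E}$, preserves discreteness of the spectrum and finiteness of multiplicities; thus ${\rm Spec\,}\Box^{(q)}_{b,\leq k\delta}$ is discrete, every $\nu\in{\rm Spec\,}\Box^{(q)}_{b,\leq k\delta}$ is an eigenvalue, and $\dim\mathcal{E}^q_{\leq k\delta,\nu}(X,L^k)<\infty$.

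Finally I would address the smoothness and the Fourier-localization claim $\mathcal{E}^q_{\leq k\delta,\nu}(X,L^k)\subset\Omega^{0,q}_{\leq k\delta}(X,L^k)$. If $u\in{\rm Dom\,}\Box^{(q)}_{b,\leq k\delta}$ satisfies $\Box^{(q)}_{b,\leq k\delta}u=\nu u$, then $u\in L^2_{\leq k\delta}(X,T^{*0,q}X\otimes L^k)$ by definition, so its Fourier expansion $u=\sum_{|m|\leq k\delta}Q^{(q)}_{m,k}u$ has only finitely many terms; moreover $Pu=\nu u+T^2u = \nu u -\sum_{|m|\le k\delta} m^2 Q^{(q)}_{m,k}u\in L^2$, and feeding this back into the elliptic operator $P$ (bootstrapping, since $P-\nu-\sum m^2 Q^{(q)}_{m,k}$ annihilates $u$ and has smooth coefficients) gives $u\in C^\infty$, hence $u\in\Omega^{0,q}_{\leq k\delta}(X,L^k)$; each $Q^{(q)}_{m,k}u$ is then smooth and lies in $\Omega^{0,q}_m(X,L^k)$. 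The only genuine obstacle is making precise that the self-adjoint operator $\Box^{(q)}_{b,\leq k\delta}$ (defined as a closed operator on the Hilbert space $L^2_{\leq k\delta}$) really does have the resolvent compactness inherited from $P$ on the ambient $L^2$; this is where Lemma~\ref{l-gue150517}'s identification of the domain is essential, since it guarantees $\mathrm{Dom\,}\Box^{(q)}_{b,\leq k\delta}\hookrightarrow L^2_{\leq k\delta}$ is compact (being contained in $H^2\cap L^2_{\leq k\delta}$, and Rellich's theorem applies on the compact $X$).
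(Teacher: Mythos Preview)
Your argument is correct and rests on the same core observation as the paper's proof: the operator $P=\Box^{(q)}_{b,k}-T^2$ is second-order elliptic on the compact manifold $X$, and on $L^2_{\leq k\delta}$ the difference between $P$ and $\Box^{(q)}_{b,\leq k\delta}$ is the bounded self-adjoint operator $\sum_{|m|\leq k\delta}m^2Q^{(q)}_{m,k}$. You package this via compact-resolvent theory (restriction of an operator with compact resolvent to a closed invariant subspace still has compact resolvent; bounded self-adjoint perturbations preserve compact resolvent), whereas the paper argues more directly by contradiction: it supposes ${\rm Spec\,}\Box^{(q)}_{b,\leq k\delta}\cap[0,\lambda]$ is not discrete, picks an orthonormal sequence $(f_j)$ in the corresponding spectral subspace, observes $\|(\Box^{(q)}_{b,\leq k\delta}-T^2)f_j\|\leq(\lambda+k^2\delta^2)\|f_j\|$, and invokes elliptic regularity plus Rellich's theorem to extract a convergent subsequence, contradicting orthonormality. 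Your route is more structural and slightly shorter once the standard perturbation lemma is granted; the paper's route is more self-contained.

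The smoothness of eigensections is handled essentially the same way in both: project onto Fourier components, reduce to the elliptic equation $(\Box^{(q)}_{b,k}-T^2)u_m=(\nu+m^2)u_m$, and bootstrap. One minor wording issue in your second paragraph: the Fourier blocks $L^2_m$ are infinite-dimensional, so the perturbation $-\sum_{|m|\leq k\delta}m^2Q^{(q)}_{m,k}$ does not literally ``shift eigenvalues inside each finite-dimensional Fourier block''; what actually does the work, and what you state correctly at the end, is that this perturbation is bounded and hence preserves compactness of the resolvent.
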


\begin{proof}
Fix $\lambda>0$. We claim that ${\rm Spec\,}\Box^{(q)}_{b,\leq k\delta}\bigcap\,[0,\lambda]$
is discrete. If not, we can find an orthonormal system
$\{f_j\in{\rm Range\,}E^{(q)}_{\leq k\delta}([0,\lambda]); j\in\N\}$,
i.\ e.\ $(\,f_j\,|\,f_\ell\,)=\delta_{j,\ell}$ for all $j,\ell\in\N$. Note that
\begin{equation}\label{e-gue150517d}
\big\|\Box^{(q)}_{b,\leq k\delta}f_j\big\|\leq\lambda\big\|f_j\big\|,\ \ j=1,2,\ldots.
\end{equation}
From \eqref{e-gue150517d}, we have
\begin{equation}\label{e-gue150520}
\big\|(\Box^{(q)}_{b,\leq k\delta}-T^2)f_j\big\|\leq(\lambda+k^2\delta^2)\big\|f_j\big\|,\ \ j=1,2,\ldots.
\end{equation}
Since $\Box^{(q)}_{b,\leq k\delta}-T^2$ is a second order elliptic operator, there is a constant
$C_\delta>0$ independent of $j$ such that
\begin{equation}\label{e-gue150520I}
\norm{f_j}_{2}\leq C_\delta,\ \ j=1,2,\ldots,
\end{equation}
where $\norm{\cdot}_{2}$ denotes the usual Sobolev norm of order $2$.
From \eqref{e-gue150520I}, we can apply Rellich's theorem and find
subsequence $\set{f_{j_s}}^\infty_{s=1}$, such that
$f_{j_s}\To f$ in $L^2_{\leq k\delta}(X,T^{*0,q}X\otimes L^k)$.
This is a contradiction to the fact that $\{f_j; j\in\N\}$ is orthonormal.
Thus, ${\rm Spec\,}\Box^{(q)}_{b,\leq k\delta}\bigcap[0,\lambda]$ is discrete
and therefore ${\rm Spec\,}\Box^{(q)}_{b,\leq k\delta}$ is a discrete subset of $[0,\infty[$.

Let $r\in{\rm Spec\,}\Box^{(q)}_{b,\leq k\delta}$.
Since ${\rm Spec\,}\Box^{(q)}_{b,\leq k\delta}$ is discrete,
$\Box^{(q)}_{b,\leq k\delta}-r$ has $L^2$ closed range.
If $\Box^{(q)}_{b,\leq k\delta}-r$ is injective,
then ${\rm Range\,}(\Box^{(q)}_{b,\leq k\delta}-r)=
L^2_{\leq k\delta}(X,T^{*0,q}X\otimes L^k)$ and
 \[(\Box^{(q)}_{b,\leq k\delta}-r)^{-1}:
 L^2_{\leq k\delta}(X,T^{*0,q}X\otimes L^k)\To
 L^2_{\leq k\delta}(X,T^{*0,q}X\otimes L^k)\]
 is continuous. We get a contradiction.
 Hence $r$ is an eigenvalue of $\Box^{(q)}_{b,\leq k\delta}$.

For any $\nu\in{\rm Spec\,}\Box^{(q)}_{b,\leq k\delta}$, put
\[\mathcal{E}^q_{\leq k\delta,\nu}(X,L^k):=\big\{u\in{\rm Dom\,}\Box^{(q)}_{b,\leq k\delta};\, \Box^{(q)}_{b,\leq k\delta}u=\nu u\big\}.\]
We can repeat the argument before and conclude that $\mathcal{E}^q_{\leq k\delta,\nu}(X,E)$
is finite dimensional. Let $u\in\mathcal{E}^q_{\leq k\delta,\nu}(X,L^k)$.
Then, $\Box^{(q)}_{b,\leq k\delta}u=\nu u$. For $m\in\mathbb Z$,
put $u_m:=Q^{(q)}_{m,k}u\in L^2_m(X,T^{*0,q}X\otimes L^k)$.
We have $u=\sum_{m\in\mathbb Z,\abs{m}\leq k\delta}u_m$. We can check that
\[\Box^{(q)}_{b,k}u_m=\nu u_m,\ \ \forall m\in\mathbb Z.\]
Hence
\begin{equation}\label{e-gue150520II}
(\Box^{(q)}_{b,k}-T^2)u_m=(\nu+m^2)u_m,\ \ \forall m\in\mathbb Z.
\end{equation}
From \eqref{e-gue150520II}, we can apply some standard argument in partial differential operator and deduce that $u_m\in\Omega^{0,q}_m(X,L^k)$.  Thus, $u\in\Omega^{0,q}_{\leq k\delta}(X,L^k)$ and hence $\mathcal{E}^q_{\leq k\delta,\nu}(X,L^k)\subset\Omega^{0,q}_{\leq k\delta}(X,L^k)$. The theorem follows.
\end{proof}

For every $\mu\in{\rm Spec\,}\Box^{(0)}_{b,\leq k\delta}$, let
\[\Pi_{k,\leq k\delta,\mu}:L^2(X,L^k)\To\mathcal{E}^0_{\leq k\delta,\mu}(X,L^k)\]
be the orthogonal projection. For $\mu=0$, it is clearly that $\Pi_{k,\leq k\delta,0}=\Pi_{k,\leq k\delta}$, where $\Pi_{k,\leq k\delta}$ is given by \eqref{e-gue150806V}. We have

\begin{thm}\label{t-gue131208}
With the assumptions and notations above, if $\epsilon_0>0$ is small enough, then for every $u\in C^\infty(X,L^k)$, we have
\begin{equation}\label{e-gue131216}
F_{k,\delta}\Pi_{k,\leq k\delta,\mu}u=0,\ \ \forall\mu\in{\rm Spec\,}\Box^{(0)}_{b,\leq k\delta},\ 0<\mu\leq k\epsilon_0,
\end{equation}
and
\begin{equation}\label{e-gue131208}
\big\|F_{k,\delta}(I-\Pi_{k,\leq k\delta})u\big\|\leq\frac{1}{k\epsilon_0}\big\|\Box^{(0)}_{b,k}u\big\|.
\end{equation}
\end{thm}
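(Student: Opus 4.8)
The plan is to exploit the fact that the operators $F_{k,\delta}$ and $\Pi_{k,\leq k\delta,\mu}$ act within the Fourier components $L^2_m(X,L^k)$ with $\abs{m}\leq k\delta$, together with the quantitative lower bound for $\Box^{(q)}_{b,k}$ on the range of $F^{(q)}_{k,\delta}$ for $q\geq1$ provided by Theorem \ref{t-gue131207I}. First I would observe that by \eqref{e-gue131207VI}, $F_{k,\delta}$ commutes with $\Box^{(0)}_{b,k}$, hence with the spectral projections $\Pi_{k,\leq k\delta,\mu}$; thus $F_{k,\delta}\Pi_{k,\leq k\delta,\mu}u=\Pi_{k,\leq k\delta,\mu}F_{k,\delta}u$, and it suffices to analyze the restriction of $F_{k,\delta}$ to an eigenspace $\mathcal{E}^0_{\leq k\delta,\mu}(X,L^k)$.

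For \eqref{e-gue131216}: take $u\in\mathcal{E}^0_{\leq k\delta,\mu}(X,L^k)$ with $0<\mu\leq k\epsilon_0$ and set $v=F_{k,\delta}u$, which by \eqref{e-gue131207VI} again lies in the $\mu$-eigenspace. Then $\Box^{(0)}_{b,k}v=\mu v$, so $\ddbar_b v\in\Omega^{0,1}(X,L^k)$ satisfies $\Box^{(1)}_{b,k}(\ddbar_b v)=\mu\,\ddbar_b v$ and also $\ddbar_b v$ lies in the range of $F^{(1)}_{k,\delta}$ (since $\ddbar_b F_{k,\delta}=F^{(1)}_{k,\delta}\ddbar_b$ by \eqref{e-gue131207VI}). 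Applying Theorem \ref{t-gue131207I} to $\ddbar_b v$ gives $\mu^2\norm{\ddbar_b v}^2=\norm{\Box^{(1)}_{b,k}\ddbar_b v}^2\geq c_1 k^2\norm{\ddbar_b v}^2$; choosing $\epsilon_0<\sqrt{c_1}$ forces $\ddbar_b v=0$, whence $\Box^{(0)}_{b,k}v=\ddbar^*_b\ddbar_b v=0$ and $\mu v=0$, i.e. $v=0$. The same argument should be run componentwise in $m$ if one prefers to avoid the (justified) use of the self-adjoint functional calculus from Theorems \ref{t-gue150517a}, \ref{t-gue150517aI}; either way this is the cleanest route.

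For \eqref{e-gue131208}: write $w=(I-\Pi_{k,\leq k\delta})u$. Decompose $F_{k,\delta}w=\sum_{0<\mu\leq k\epsilon_0}F_{k,\delta}\Pi_{k,\leq k\delta,\mu}w + \sum_{\mu>k\epsilon_0}F_{k,\delta}\Pi_{k,\leq k\delta,\mu}w$. By \eqref{e-gue131216} the first sum vanishes, so $F_{k,\delta}w=\sum_{\mu>k\epsilon_0}\Pi_{k,\leq k\delta,\mu}F_{k,\delta}w$ (using commutativity again). Since the $\Pi_{k,\leq k\delta,\mu}$ are mutually orthogonal and $F_{k,\delta}$ is a contraction, Parseval together with the bound $\norm{F_{k,\delta}w}^2=\sum_{\mu>k\epsilon_0}\norm{\Pi_{k,\leq k\delta,\mu}F_{k,\delta}w}^2\leq \frac{1}{(k\epsilon_0)^2}\sum_{\mu>k\epsilon_0}\mu^2\norm{\Pi_{k,\leq k\delta,\mu}F_{k,\delta}w}^2 = \frac{1}{(k\epsilon_0)^2}\norm{\Box^{(0)}_{b,k}F_{k,\delta}w}^2$ yields the claim, once one notes $\Box^{(0)}_{b,k}F_{k,\delta}w=F_{k,\delta}\Box^{(0)}_{b,k}w=F_{k,\delta}\Box^{(0)}_{b,k}(I-\Pi_{k,\leq k\delta})u=F_{k,\delta}\Box^{(0)}_{b,k}u$ and that $F_{k,\delta}$ is a contraction, so $\norm{\Box^{(0)}_{b,k}F_{k,\delta}w}\leq\norm{\Box^{(0)}_{b,k}u}$.

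The main obstacle is the first part, \eqref{e-gue131216}: one must set up the spectral theory of $\Box^{(0)}_{b,\leq k\delta}$ carefully (this is exactly what Theorems \ref{t-gue150517a} and \ref{t-gue150517aI} are for) so that the decomposition into eigenspaces is legitimate, and then correctly transfer the a priori estimate of Theorem \ref{t-gue131207I} — which is stated at the level of forms of degree $q\geq1$ in the range of $F^{(q)}_{k,\delta}$ — to the image $\ddbar_b F_{k,\delta}u$. The bookkeeping to ensure $\ddbar_b F_{k,\delta}u$ really lies in $\mathrm{Range}\,F^{(1)}_{k,\delta}$ and is an eigenform of $\Box^{(1)}_{b,k}$ relies entirely on the commutation relations \eqref{e-gue131207VI}, so the argument is conceptually short but must be assembled with care regarding domains and the passage between $C^\infty$ and $L^2$.
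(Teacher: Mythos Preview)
Your proposal is correct and follows essentially the same approach as the paper: for \eqref{e-gue131216} you apply Theorem~\ref{t-gue131207I} to $\ddbar_b F_{k,\delta}\Pi_{k,\leq k\delta,\mu}u=F^{(1)}_{k,\delta}\ddbar_b\Pi_{k,\leq k\delta,\mu}u$ via the commutation relations \eqref{e-gue131207VI}, exactly as the paper does, and for \eqref{e-gue131208} your Parseval argument on the eigenspaces with $\mu>k\epsilon_0$ is the discrete-spectrum unpacking of the paper's use of the spectral projection $\Pi_{k,\leq k\delta,>k\epsilon_0}$. The only cosmetic difference is that the paper first drops $F_{k,\delta}$ by contraction and then applies the spectral lower bound, whereas you keep $F_{k,\delta}$ through the spectral estimate and use contraction at the end; both orderings give the same inequality.
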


\begin{proof}
Let $\epsilon_0>0$ be a small constant. For $u\in L^2_{\leq k\delta}(X,L^k)$, we have
\begin{equation}\label{e-gue131208I}
(I-\Pi_{k,\leq k\delta})u=\sum_{\substack{\mu\in{\rm Spec\,}\Box^{(0)}_{b,\leq k\delta}\\
0<\mu\leq k\epsilon_0}}\Pi_{k,\leq k\delta,\mu}u+\Pi_{k,\leq k\delta,>k\epsilon_0}u.
\end{equation}
We claim that for every $\mu\in{\rm Spec\,}\Box^{(0)}_{b,\leq k\delta},0<\mu\leq k\epsilon_0$ and every $u\in C^\infty(X,L^k)$,
\begin{equation}\label{e-gue131208II}
F_{k,\delta}\Pi_{k,\leq k\delta,\mu}u=0
\end{equation}
if $\epsilon_0>0$ is small enough. Fix $\mu\in{\rm Spec\,}\Box^{(0)}_{b,\leq k\delta}\bigcap\,(0,k\delta]$
and $u\in C^\infty(X,L^k)$. From \eqref{e-gue131207VI} and \eqref{e-gue131207aI}, we have
\begin{equation}\label{e-gue131208III}
\norm{\Box^{(1)}_{b,k}F^{(1)}_{k, \delta}\ddbar_{b}\Pi_{k,\leq k\delta,\mu}u}^2\geq c_1k^2\norm{F^{(1)}_{k, \delta}\ddbar_{b}\Pi_{k,\leq k\delta,\mu}u}^2,\end{equation}
where $c_1>0$ is a constant independent of $k$ and $u$. It is easy to see that
\[\Box^{(1)}_{b,k}F^{(1)}_{k, \delta}\ddbar_{b}\Pi_{k,\leq k\delta,\mu}u=\mu F^{(1)}_{k, \delta}\ddbar_{b}\Pi_{k,\leq k\delta,\mu}u.\]
Thus,
\begin{equation}\label{e-gue131208IV}
\norm{\Box^{(1)}_{b,k}F^{(1)}_{k, \delta}\ddbar_{b}\Pi_{k,\leq k\delta,\mu}u}^2\leq k^2\epsilon_0^2\norm{F^{(1)}_{k, \delta}\ddbar_{b}\Pi_{k,\leq k\delta,\mu}u}^2.\end{equation}
From \eqref{e-gue131208III} and \eqref{e-gue131208IV}, we conclude that if $\epsilon_0>0$ is small enough then
\[F^{(1)}_{k, \delta}\ddbar_{b}\Pi_{k,\leq k\delta,\mu}u=\ddbar_{b}F_{k,\delta}\Pi_{k,\leq k\delta,\mu}u=0.\]
Hence,
\begin{equation}\label{e-gue131208V}
F_{k,\delta}\Pi_{k,\leq k\delta,\mu}u=\frac{1}{\mu}\Box^{(0)}_{b,k}F_{k,\delta}\Pi_{k,\leq k\delta,\mu}u=0.
\end{equation}
From \eqref{e-gue131208V}, the claim \eqref{e-gue131208II} follows. We get \eqref{e-gue131216}.

Let $Q^{(0)}_{\leq k\delta}:L^2(X,L^k)\To L^2_{\leq k\delta}(X,L^k)$ be the orthogonal projection.
From \eqref{e-gue131208I} and \eqref{e-gue131208II}, if $\epsilon_0>0$ is small enough, then
\begin{equation}\label{e-gue131208VI}
\begin{split}
&\norm{F_{k,\delta}(I-\Pi_{k,\leq k\delta})u)}\\
&=\norm{F_{k,\delta}(I-\Pi_{k,\leq k\delta})(Q^{(0)}_{\leq k\delta}u)}\\
&=\norm{F_{k,\delta}\Pi_{k,\leq k\delta,>k\epsilon_0}(Q^{(0)}_{\leq k\delta}u)}\\
&\leq\norm{\Pi_{k,\leq k\delta,>k\epsilon_0}(Q^{(0)}_{\leq k\delta}u)}\\
&\leq\frac{1}{k\epsilon_0}\norm{\Box^{(0)}_{b,k}\Pi^{(0)}_{k,\leq k\delta,>k\epsilon_0}(Q^{(0)}_{\leq k\delta}u)}\\
&=\frac{1}{k\epsilon_0}\norm{\Pi^{(0)}_{k,\leq k\delta,>k\epsilon_0}\Box^{(0)}_{b,k}(Q^{(0)}_{\leq k\delta}u)}\leq\frac{1}{k\epsilon_0}\norm{\Box^{(0)}_{b,k}u},
\end{split}
\end{equation}
for every $u\in C^\infty(X,L^k)$. From \eqref{e-gue131208VI}, \eqref{e-gue131208} follows.
\end{proof}

\begin{thm}\label{t-gue150816}
$\Box^{(0)}_{b,k}$ has a $O(k^{-n_0})$ small spectral gap on $X$ with respect to $F_{k,\delta}$.
\end{thm}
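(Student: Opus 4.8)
The plan is to deduce Theorem~\ref{t-gue150816} from Theorem~\ref{t-gue131208} by a short Fourier-mode bookkeeping argument; no essentially new analysis is needed. Recall from Definition~\ref{d-gue130820m} that, with $D=X$, we must produce constants $C>0$, $n_0,p\in\mathbb{N}$, $k_0\in\mathbb{N}$ so that $\norm{F_{k,\delta}(I-\Pi_k)u}\le C k^{n_0}\sqrt{\bigl((\Box^{(0)}_{b,k})^pu\mid u\bigr)}$ for all $k\ge k_0$ and all $u\in C^\infty(X,L^k)$, where $\Pi_k$ is the Szeg\H{o} projector onto $\Ker\Box^{(0)}_{b,k}$ inside $L^2(X,L^k)$. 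The bound we obtain will be uniform, so a fortiori it gives the estimate for $u\in C^\infty_0(D',L^k)$ for every $D'\Subset X$, which is what is meant by a small spectral gap on $X$.

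The key step is the operator identity $F_{k,\delta}(I-\Pi_k)=F_{k,\delta}(I-\Pi_{k,\le k\delta})$ on $C^\infty(X,L^k)$. Since $\Box^{(0)}_{b,k}$ commutes with $T$ by \eqref{e-gue150517II}, its kernel splits along Fourier modes, $\Ker\Box^{(0)}_{b,k}=\bigoplus_{m\in\mathbb{Z}}\mathcal{H}^0_{b,m}(X,L^k)$, so $\Pi_k=\sum_m\Pi_{k,m}$ with $\Pi_{k,m}$ the orthogonal projection onto $\mathcal{H}^0_{b,m}(X,L^k)$, and $Q^{(0)}_{m,k}\Pi_k=\Pi_{k,m}=Q^{(0)}_{m,k}\Pi_{k,\le k\delta}$ for $\abs{m}\le k\delta$. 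Because $F_{k,\delta}$ multiplies the $m$-th Fourier component by $\tau_\delta(m/k)$, which vanishes once $\abs{m}\ge k\delta$, we get $F_{k,\delta}\Pi_ku=\sum_{\abs{m}\le k\delta}\tau_\delta(m/k)\Pi_{k,m}u=F_{k,\delta}\Pi_{k,\le k\delta}u$; subtracting from $F_{k,\delta}u$ gives the identity. The one point needing care is this bookkeeping: one must check that $\Pi_k$ is taken with respect to the $L^2$ inner product on $L^k$-valued sections so that it commutes with the $Q^{(0)}_{m,k}$, and that $\tau_\delta(\pm\delta)=0$, so that the cut-off truncates exactly at level $k\delta$.

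With the identity in hand, Theorem~\ref{t-gue131208} (valid for $\epsilon_0>0$ small, which we have already fixed together with $\delta$) gives $\norm{F_{k,\delta}(I-\Pi_k)u}=\norm{F_{k,\delta}(I-\Pi_{k,\le k\delta})u}\le \frac{1}{k\epsilon_0}\norm{\Box^{(0)}_{b,k}u}$ for every $u\in C^\infty(X,L^k)$. Finally, since $\Box^{(0)}_{b,k}$ is formally self-adjoint, $\norm{\Box^{(0)}_{b,k}u}^2=\bigl((\Box^{(0)}_{b,k})^2u\mid u\bigr)$, whence $\norm{F_{k,\delta}(I-\Pi_k)u}\le \frac{1}{\epsilon_0}\sqrt{\bigl((\Box^{(0)}_{b,k})^2u\mid u\bigr)}$ for all $k\ge 1$. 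This is exactly the inequality of Definition~\ref{d-gue130820m} with $p=2$, $n_0=0$ (in fact with a $k^{-1}$ gain), $C=1/\epsilon_0$ and $k_0=1$, so $\Box^{(0)}_{b,k}$ has $O(k^{-n_0})$ small spectral gap on $X$ with respect to $F_{k,\delta}$. The substantive work — the estimate $\norm{\Box^{(1)}_{b,k}F^{(1)}_{k,\delta}u}^2\ge c_1k^2\norm{F^{(1)}_{k,\delta}u}^2$ of Theorem~\ref{t-gue131207I} and its consequence Theorem~\ref{t-gue131208} — is already in place, so the Fourier-mode identity of the second paragraph is the main (and essentially only) obstacle.
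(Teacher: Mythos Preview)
Your proof is correct and follows essentially the same approach as the paper: establish the identity $F_{k,\delta}(I-\Pi_k)=F_{k,\delta}(I-\Pi_{k,\le k\delta})$ and then invoke Theorem~\ref{t-gue131208}. The paper simply asserts the identity as ``easy to see'' and says ``the theorem follows,'' whereas you spell out the Fourier-mode bookkeeping and the passage from $\norm{\Box^{(0)}_{b,k}u}$ to $\sqrt{((\Box^{(0)}_{b,k})^2u\mid u)}$ needed to match Definition~\ref{d-gue130820m}.
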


\begin{proof}
Let $u\in C^\infty(X,L^k)$. It is easy to see that
\begin{equation}\label{e-gue150816}
F_{k,\delta}(I-\Pi_k)u=F_{k,\delta}(I-\Pi_{k,\leq k\delta})u.
\end{equation}
From \eqref{e-gue150816} and \eqref{e-gue131208}, the theorem follows.
\end{proof}

\subsection{The operator $F_{k,\delta}$ on canonical coordinate patch}\label{s-gue150816}

Let $D\subset X$ be a canonical coordinate patch and let $x=(x_1,\ldots,x_{2n-1})$ be canonical coordinates on $D$. We identify $D$ with $W\times]-\varepsilon,\varepsilon[\subset\Real^{2n-1}$, where $W$ is some open set in $\Real^{2n-2}$ and $\varepsilon>0$. Until further notice, we work with canonical coordinates $x=(x_1,\ldots,x_{2n-1})$. Let $\eta=(\eta_1,\ldots,\eta_{2n-1})$ be the dual coordinates of $x$.
Let $s$ be a local rigid CR frame of $L$ on $D$, $\abs{s}^2_{h}=e^{-2\Phi}$. Let $F_{k, \delta,s}$ be the localized operator of $F_{k,\delta}$ given by \eqref{e-gue140824II}. Put
\begin{equation}\label{e-gue131209}
B_{k}=\frac{k^{2n-1}}{(2\pi)^{2n-1}}\int e^{ik\langle x-y,\eta\rangle}\tau_\delta(\eta_{2n-1})d\eta.
\end{equation}

\begin{lem}\label{l-gue131209}
We have
\[F_{k, \delta,s}-B_{k}=O(k^{-\infty}):H^s_{{\rm comp\,}}(D)\To H^s_{{\rm loc\,}}(D),\ \ \forall s\in\mathbb N_0.\]
\end{lem}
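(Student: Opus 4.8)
\textbf{Proof proposal for Lemma~\ref{l-gue131209}.}
The plan is to show that on the canonical coordinate patch $D$, the localized operator $F_{k,\delta,s}$ acts, up to a $k$-negligible error, as the Fourier multiplier cutting off the $x_{2n-1}$-frequency via $\tau_\delta(\eta_{2n-1}/1)$ rescaled by $k$; this is precisely the operator $B_k$ in \eqref{e-gue131209}. First I would unwind the definition: since $s$ is a \emph{rigid} CR frame and $h$ is rigid, in canonical coordinates $T=\partial/\partial\theta=\partial/\partial x_{2n-1}$ both on functions and after conjugation by $U_{k,s}$, because $\Phi$ can be taken $\theta$-independent (or more precisely $T\Phi=0$). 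Consequently the Fourier decomposition in $m$ corresponds exactly to the Fourier decomposition of $u\in L^2_{\mathrm{comp}}(D)$ in the variable $x_{2n-1}$, and $U_{k,s}^{-1}Q^{(0)}_{m,k}U_{k,s}$ is the operator extracting the $m$-th Fourier coefficient in $x_{2n-1}$ (restricted appropriately). Summing against the weights $\tau_\delta(m/k)$ and comparing with the oscillatory integral $\frac{k^{2n-1}}{(2\pi)^{2n-1}}\int e^{ik\langle x-y,\eta\rangle}\tau_\delta(\eta_{2n-1})\,d\eta$, the discrepancy between the discrete sum over $m\in\mathbb Z$ and the continuous integral over $\eta_{2n-1}\in\Real$ is governed by the Poisson summation formula.

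The key steps, in order: \textbf{(1)} Reduce to a statement purely about the $x_{2n-1}$-variable, treating $x'=(x_1,\ldots,x_{2n-2})$ and its dual $\eta'$ as parameters; here one writes $B_k$ as the composition of the identity in $x'$ with a one-dimensional operator in $x_{2n-1}$, using that $\int e^{ik\langle x'-y',\eta'\rangle}\,d\eta' = (2\pi/k)^{2n-2}\delta(x'-y')$. \textbf{(2)} Identify $U_{k,s}^{-1}F_{k,\delta}U_{k,s}$ acting on $u\in C^\infty_0(D)$: write $u(x)=\sum_{m}\hat u_m(x')e^{imx_{2n-1}}$ (Fourier series on a period containing $(-\varepsilon,\varepsilon)$ after a cutoff), note $Q^{(0)}_{m,k}$ picks out the component transforming under $e^{i\theta}$ with weight $m$, which after the rigid trivialization is exactly $\hat u_m(x')e^{imx_{2n-1}}$, so $F_{k,\delta,s}u = \sum_m \tau_\delta(m/k)\hat u_m(x')e^{imx_{2n-1}}$. \textbf{(3)} Apply Poisson summation: $\sum_{m\in\mathbb Z}\tau_\delta(m/k)e^{im t} = k\sum_{\ell\in\mathbb Z}\widehat{\tau_\delta}(k(t+2\pi\ell))$ (suitable normalization of $\widehat{\tau_\delta}$); the $\ell=0$ term reproduces $B_k$ (after reinserting the $x'$-variables), while for $t=x_{2n-1}-y_{2n-1}$ ranging in a bounded set and $\ell\neq0$ the arguments $k(t+2\pi\ell)$ stay bounded away from $0$ and tend to infinity linearly in $k$, so since $\tau_\delta\in C^\infty_0$ its Fourier transform is Schwartz and these terms, together with all their derivatives, are $O(k^{-N})$ for every $N$ uniformly on compacts of $D\times D$. \textbf{(4)} Check that the Sobolev-norm estimate \eqref{e-gue13628II} follows: the difference $F_{k,\delta,s}-B_k$ has smooth kernel that is $O(k^{-\infty})$ with all $x,y$-derivatives on compacts, which immediately gives continuity $H^s_{\mathrm{comp}}(D)\to H^s_{\mathrm{loc}}(D)$ with norm $O(k^{-N})$ for all $N$, i.e.\ the claimed $O(k^{-\infty})$.

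The main obstacle I expect is step \textbf{(2)}: making rigorous the identification of the abstract spectral projector $Q^{(0)}_{m,k}$ on $L^2(X,L^k)$ with the elementary Fourier-coefficient extraction in the coordinate $x_{2n-1}$ \emph{after} localizing and conjugating by $U_{k,s}$. This requires care because $Q^{(0)}_{m,k}$ is globally defined on $X$ whereas $B_k$ lives on $D$; one must handle the cutoff functions $\chi_0,\chi_1$ in \eqref{e-gue13628II}, verify that the rigidity of $s$ and $h$ (so that $T$ becomes literally $\partial/\partial x_{2n-1}$ under $U_{k,s}$, using $\abs{s}^2_h=e^{-2\Phi}$ with $T\Phi=0$) makes $U_{k,s}$ intertwine the $S^1$-Fourier decomposition with the $x_{2n-1}$-Fourier decomposition, and confirm that no boundary contributions from $\partial D$ in the $x_{2n-1}$-direction intrude — which is exactly where one invokes that the kernel discrepancy is $O(k^{-\infty})$ rather than exactly zero. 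Once this identification is in place, the Poisson summation estimate in step \textbf{(3)} is routine, being nothing more than the rapid decay of the Fourier transform of a compactly supported smooth function.
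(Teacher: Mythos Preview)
Your approach via Poisson summation is correct and takes a different route from the paper. The paper does not invoke Poisson summation; instead it writes $F_{k,\delta,s}u$ explicitly as the weighted $S^1$-Fourier sum $\frac{1}{2\pi}\sum_m\tau_\delta(m/k)e^{imy_{2n-1}}\int_{-\pi}^{\pi}e^{-imt}u(e^{it}y')\,dt$, then expresses $B_k u$ in the same form after inserting a cut-off $\chi(y_{2n-1})$ and collapsing the $\eta_{2n-1}$-integral via the distributional identity $\int e^{i(m-\eta)y}\,dy=2\pi\delta_m(\eta)$; the difference is an explicit remainder $R_k$ carrying the factor $(1-\chi(y_{2n-1}))$, which is dispatched by integration by parts in $\eta_{2n-1}$ and $y_{2n-1}$ (using that $|x_{2n-1}-y_{2n-1}|$ is bounded below on ${\rm Supp\,}(1-\chi)$ for $x\in D'$). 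Your Step~(3) achieves the same comparison of ``sum over $m$'' versus ``integral over $\eta$'' in a single stroke, the $\ell\neq0$ Poisson terms playing the role of $R_k$; both methods ultimately rest on the rapid decay of $\widehat{\tau_\delta}$. The paper's version has the advantage that the global $S^1$-action is built into the formula for $F_{k,\delta,s}u$ from the outset, so your Step~(2) obstacle (matching the abstract $Q^{(0)}_{m,k}$ with a local Fourier coefficient) never has to be isolated; conversely, your formulation makes the combinatorial core (lattice sum versus integral) more transparent. One small caveat in your Step~(4): the full Schwartz kernel of $F_{k,\delta,s}-B_k$ is \emph{not} smooth on $D\times D$---it carries a $\delta(x'-y')$ in the transverse variables---so what you actually obtain is a convolution in $x_{2n-1}$ (identity in $x'$) with a one-dimensional kernel $K_k$ satisfying $\|K_k\|_{C^m}=O(k^{-\infty})$; this still yields the required $H^s_{\rm comp}\to H^s_{\rm loc}$ bound, but the passage should be phrased via Young or Fourier-multiplier estimates rather than a smoothing-kernel argument.
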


\begin{proof}
We also write $y=(y_1,\ldots,y_{2n-1})$ to denote the canonical coordinates $x$. It is easy to see that on $D$,
\begin{equation}\label{e-gue131217}
F_{k, \delta,s}u(y)=\frac{1}{2\pi}
\sum_{m\in\mathbb Z}\tau_\delta\Big(\frac{m}{k}\Big)e^{imy_{2n-1}}\int^{\pi}_{-\pi}e^{-imt}
u(e^{it} y')dt,\ \ \forall u\in C^\infty_0(D),
\end{equation}
where $y'=(y_1,\ldots,y_{2n-2})$. Fix $D'\Subset D$ and let $\chi(y_{2n-1})\in C^\infty_0(]-\varepsilon,\varepsilon[)$ such that $\chi(y_{2n-1})=1$ for every $(y',y_{2n-1})\in D'$. Let $R_{k}:C^\infty_0(D')\To C^\infty(D')$ be the continuous operator given by
\begin{equation}\label{e-gue131217I}
\begin{split}
&(2\pi)^2R_ku=\\
&\sum_{m\in\mathbb Z}\:
\int\limits_{\abs{t}\leq\pi}e^{i\langle x_{2n-1}-y_{2n-1},\eta_{2n-1}\rangle+im(y_{2n-1}-t)}\tau_\delta\Big(\frac{\eta_{2n-1}}{k}\Big) (1-\chi(y_{2n-1}))u(e^{it}x')dtd\eta_{2n-1}dy_{2n-1}.
\end{split}
\end{equation}
By using integration by parts with respect to $\eta_{2n-1}$, it is easy to see that the integral \eqref{e-gue131217I} is well-defined. Moreover, we can integrate by parts with respect to $\eta_{2n-1}$ and $y_{2n-1}$ several times and conclude that
\begin{equation}\label{e-gue131217II}
R_{k}=O(k^{-\infty}):H^s_{{\rm comp\,}}(D')\To H^s_{{\rm loc\,}}(D'),\ \ \forall s\in\mathbb N_0.
\end{equation}
Now, we claim that
\begin{equation}\label{e-gue131217III}
B_{k}+R_k=F_{k, \delta,s}\ \ \mbox{on $C^\infty_0(D')$}.
\end{equation}
Let $u\in C^\infty_0(D')$. From \eqref{e-gue131209} and Fourier inversion formula, it is straightforward to see that
\begin{equation}\label{e-gue131217IV}
\begin{split}
B_{k}u(x)&=\frac{1}{(2\pi)^2}\sum_{m\in\mathbb Z}
\int_{\abs{t}\leq\pi}e^{i\langle x_{2n-1}-y_{2n-1},\eta_{2n-1}\rangle}
\tau_\delta\Big(\frac{\eta_{2n-1}}{k}\Big)\chi(y_{2n-1})e^{im(y_{2n-1}-t)}
u(e^{it}  x')dtd\eta_{2n-1}dy_{2n-1}.
\end{split}
\end{equation}
From \eqref{e-gue131217IV} and \eqref{e-gue131217I}, we have
\begin{equation}\label{e-gue131217V}
\begin{split}
&(B_{k}+R_{k})u(x)\\
&=\frac{1}{(2\pi)^2}\sum_{m\in\mathbb Z}
\int_{\abs{t}\leq\pi}e^{i\langle x_{2n-1}-y_{2n-1},\eta_{2n-1}\rangle}\tau_\delta\Big(\frac{\eta_{2n-1}}{k}\Big)e^{imy_{2n-1}}e^{-imt}
u(e^{it}x')dtd\eta_{2n-1}dy_{2n-1}.
\end{split}\end{equation}
Note that the following formula holds for every $m\in\mathbb Z$,
\begin{equation}\label{dm}
\int e^{imy_{2n-1}}e^{-iy_{2n-1}\eta_{2n-1}}dy_{2n-1}=2\pi\delta_m(\eta_{2n-1}),
\end{equation}
where the integral is defined as an oscillatory integral and $\delta_m$ is the Dirac measure at $m$.
Using \eqref{e-gue131217}, \eqref{dm} and the Fourier inversion formula,
 \eqref{e-gue131217V} becomes
\begin{equation}\label{e-gue131217VI}
\begin{split}
(B_{k}+R_k)u(x)
=\frac{1}{2\pi}\sum_{m\in\mathbb Z}\tau_\delta\Big(\frac{m}{k}\Big)e^{ix_{2n-1}m}
\int_{\abs{t}\leq\pi}e^{-imt}u(e^{it} x')dt
=F_{k, \delta,s}u(x).
\end{split}\end{equation}
From \eqref{e-gue131217VI}, the claim \eqref{e-gue131217III} follows.
From \eqref{e-gue131217III} and \eqref{e-gue131217II}, the lemma follows.
\end{proof}

From Lemma~\ref{l-gue131209}, we see that the condition {\rm (III)\,} in Theorem~\ref{t-gue150811} holds.

\begin{lem}\label{l-gue150813}
Let $D\subset X$ be a canonical coordinate patch of $X$.
Then, $\Pi_{k,\leq k\delta}$ is $k$-negligible away the diagonal with respect to $F_{k,\delta}$ on $D$.
\end{lem}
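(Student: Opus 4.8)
\emph{Plan.} I would reduce the claim to two ingredients: that $\chi F_{k,\delta}(1-\chi_1)$ is $O(k^{-\infty})$ not merely on $L^2$ but on \emph{every} Sobolev space, and that $\Pi_{k,\leq k\delta}$ maps $H^{-N}$ into $H^{N}$ with only polynomial loss in $k$. First note that since $\chi,\chi_1$ are real-valued and $F_{k,\delta}=\sum_{m}\tau_\delta(m/k)Q^{(0)}_{m,k}$ is self-adjoint (the $Q^{(0)}_{m,k}$ being orthogonal projections onto mutually orthogonal subspaces and $\tau_\delta$ real), the Hilbert-space adjoint of $E_k:=\chi F_{k,\delta}(1-\chi_1)$ is $E_k^{*}=(1-\chi_1)F_{k,\delta}\chi$. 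Hence the operator appearing in Definition~\ref{d-gue131205m} with $\lambda=k\delta$ is exactly $E_k\,\Pi_{k,\leq k\delta}\,E_k^{*}$, whose Schwartz kernel is supported in $\mathrm{Supp}\,\chi\times\mathrm{Supp}\,\chi\Subset D\times D$; so it suffices to show $E_k\Pi_{k,\leq k\delta}E_k^{*}=O(k^{-\infty})\colon H^{-N}_{\rm comp}(D)\to H^{N}_{\rm loc}(D)$ for every $N$, which by the Sobolev characterisation of $O(k^{-\infty})$ kernels recalled in Section~\ref{s-ssnI} gives the conclusion.

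\emph{First ingredient.} On a canonical coordinate patch one has the orbit-integral representation $F_{k,\delta}u(x)=\frac{1}{2\pi}\int_{-\pi}^{\pi}\kappa_k(\theta)\,u(e^{i\theta}x)\,d\theta$, with $\kappa_k(\theta)=\sum_{m\in\mathbb{Z}}\tau_\delta(m/k)e^{-im\theta}$ (equivalently, one invokes Lemma~\ref{l-gue131209} and works with $B_k$). By Poisson summation, $\kappa_k(\theta)=\sum_{j\in\mathbb{Z}}k\,\widehat{\tau_\delta}\!\big(k(\theta+2\pi j)\big)$, so $\kappa_k$ concentrates near $\theta\in2\pi\mathbb{Z}$: for every $c>0$ and every $\ell\in\mathbb{N}_0$ one has $\sup_{c\le|\theta|\le\pi}|\partial_\theta^{\ell}\kappa_k(\theta)|=O(k^{-\infty})$, while $\int_{-\pi}^{\pi}|\kappa_k(\theta)|\,d\theta=O(1)$. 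Since $\chi_1\equiv1$ on a neighbourhood of $\mathrm{Supp}\,\chi$, there is $c>0$ with $(1-\chi_1(e^{i\theta}x))=0$ whenever $x\in\mathrm{Supp}\,\chi$ and $|\theta|<c$; thus $E_ku(x)=\chi(x)\frac{1}{2\pi}\int_{c\le|\theta|\le\pi}\kappa_k(\theta)(1-\chi_1(e^{i\theta}x))u(e^{i\theta}x)\,d\theta$. Differentiating under the integral, using the rapid decay of $\kappa_k$ on $\{c\le|\theta|\le\pi\}$, the uniform boundedness of $de^{i\theta}$ and the $S^1$-invariance of $dv_X$, one obtains $\|E_ku\|_{H^{s}}=O(k^{-\infty})\|u\|_{H^{s}}$ for every $s\ge0$; the same argument applies verbatim to $E_k^{*}=(1-\chi_1)F_{k,\delta}\chi$ (the supports of $\chi$ and $1-\chi_1$ are again separated along the orbits), and the case $s<0$ follows by duality. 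Hence $E_k,E_k^{*}=O(k^{-\infty})\colon H^{s}(X,L^k)\to H^{s}(X,L^k)$ for every $s\in\mathbb{R}$.

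\emph{Second ingredient and conclusion.} Any $v\in\mathcal{H}^0_{b,\leq k\delta}(X,L^k)$ satisfies $\ddbar_b v=0$, hence $\Box^{(0)}_{b,k}v=0$, and $\|Tv\|\le k\delta\|v\|$; moreover $T^{2\ell}v\in\mathcal{H}^0_{b,\leq k\delta}(X,L^k)$. Since $\Box^{(0)}_{b,k}-T^{2}$ is elliptic of order $2$ (as already used in Lemma~\ref{l-gue150517}), with principal symbol independent of $k$ and lower-order coefficients of size $O(k^{2})$, iterating the elliptic estimate yields $\|v\|_{H^{s}}\le C_s k^{s}\|v\|$ for all $s\in\mathbb{N}_0$. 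Taking an orthonormal basis $f_1,\dots,f_{d_k}$ of $\mathcal{H}^0_{b,\leq k\delta}(X,L^k)$ and noting that $d_k=O(k^{2n-1})$ (by the minimax principle, $d_k$ is bounded by the number of eigenvalues of $\Box^{(0)}_{b,k}-T^{2}$ that are $\le(k\delta)^{2}$), we get, for $u\in H^{-N}(X,L^k)$, $\|\Pi_{k,\leq k\delta}u\|_{L^2}^{2}=\sum_j|(u\,|\,f_j)|^{2}\le\|u\|_{H^{-N}}^{2}\sum_j\|f_j\|_{H^{N}}^{2}=O(k^{P})\|u\|_{H^{-N}}^{2}$, and combining with the elliptic estimate, $\Pi_{k,\leq k\delta}=O(k^{P(N)})\colon H^{-N}(X,L^k)\to H^{N}(X,L^k)$. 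Therefore, for every $N\in\mathbb{N}$, $E_k\Pi_{k,\leq k\delta}E_k^{*}$ factors as $H^{-N}_{\rm comp}(D)\xrightarrow{E_k^{*}}H^{-N}(X,L^k)\xrightarrow{\Pi_{k,\leq k\delta}}H^{N}(X,L^k)\xrightarrow{E_k}H^{N}_{\rm loc}(D)$ with operator norms $O(k^{-\infty})$, $O(k^{P(N)})$, $O(k^{-\infty})$ respectively, so it is $O(k^{-\infty})\colon H^{-N}_{\rm comp}(D)\to H^{N}_{\rm loc}(D)$ for every $N$; since its kernel is supported in a fixed compact subset of $D\times D$, this is precisely the asserted $k$-negligibility away the diagonal.

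\emph{Main obstacle.} The crux is the first ingredient: one must upgrade the easy $L^2$-smallness of $\chi F_{k,\delta}(1-\chi_1)$ to $O(k^{-\infty})$ on all Sobolev spaces, so that the polynomial loss of $\Pi_{k,\leq k\delta}$ can be absorbed. This works because $F_{k,\delta}$ acts only along the $S^1$-orbits through the Dirichlet-type kernel $\kappa_k$, which is rapidly decreasing away from $\theta\in2\pi\mathbb{Z}$, combined with the disjointness of $\mathrm{Supp}\,\chi$ and $\mathrm{Supp}(1-\chi_1)$; the remaining steps (the elliptic estimate on $\mathcal{H}^0_{b,\leq k\delta}$ and the assembling of the factorisation) are routine.
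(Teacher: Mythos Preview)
Your argument is correct and reaches the same conclusion, but the strategy differs from the paper's. The paper works pointwise with elements of $\mathcal{H}^0_{b,\leq k\delta}(X,L^k)$: it takes $u$ with $\norm{u}=1$, uses the a~priori sup-norm bound $\abs{u(x)}^2_{h^k}\leq Ck^n$ from \cite[Theorem~2.4]{HL15}, writes $\chi F_{k,\delta}(1-\chi_1)u$ explicitly in canonical coordinates, decomposes it into three pieces $I_0+I_1-I_2$ (splitting according to whether $x_{2n-1}-y_{2n-1}$ is small and whether $\abs{m}\le 2k\delta$), kills $I_0,I_2$ by integration by parts in $\eta_{2n-1}$ and $y_{2n-1}$ respectively, observes $I_1=0$ by the support condition, and finally sums over an orthonormal basis (using $d_k=O(k^n)$) to control the diagonal of the kernel of $E_k\Pi_{k,\leq k\delta}E_k^*$.

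Your route is an operator-theoretic factorisation: you show $E_k=\chi F_{k,\delta}(1-\chi_1)$ and $E_k^*$ are $O(k^{-\infty})$ on every Sobolev space by the orbit-integral representation together with Poisson summation (which packages the same Fourier localisation the paper achieves via the $I_0,I_1,I_2$ split), and then absorb the merely polynomial Sobolev loss of $\Pi_{k,\leq k\delta}$ coming from the ellipticity of $\Box^{(0)}_{b,k}-T^2$. Your approach is cleaner conceptually and avoids the external pointwise bound on CR sections; the paper's approach is more hands-on and makes the mechanism visible in coordinates. Two minor remarks: your bound $d_k=O(k^{2n-1})$ via minimax for the $k$-dependent operator $\Box^{(0)}_{b,k}-T^2$ is a little delicate since the lower-order coefficients are $O(k^2)$; it is simpler to quote $d_k\le C_1k^n$ from \cite[Theorem~1.4]{HL15}, as the paper does in \eqref{e-gue131019I}. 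Also, the iteration giving $\norm{v}_{H^s}\le C_sk^s\norm{v}$ for $v\in\mathcal{H}^0_{b,\leq k\delta}(X,L^k)$ requires a short interpolation argument (to control the intermediate $H^{s-1}$ norm arising from the $O(k)$ first-order term), which you gloss over but which goes through without difficulty.
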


\begin{proof}
Let $\chi, \chi_1\in C^\infty_0(D)$, $\chi_1=1$ on some neighbourhood of
${\rm Supp\,}\chi$. Let $u\in\mathcal{H}^0_{b,\leq k\delta}(X,L^k)$ with
$\norm{u}=1$. It is well-known that (see Theorem 2.4 in~\cite{HL15})
there is a constant $C>0$ independent of $k$ and $u$ such that
\begin{equation}\label{e-gue150813}
\abs{u(x)}^2_{h^k}\leq Ck^n,\ \ \forall x\in X.
\end{equation}
Let $x=(x_1,\ldots,x_{2n-1})=(x',x_{2n-1})$ be canonical coordinates on $D$. Put $v=(1-\chi_1)u$. It is straightforward to see that on $D$,
\begin{equation}\label{e-gue150813I}
\begin{split}
&(2\pi)^2\chi F_{k,\delta}(1-\chi_1)u(x)\\
&=\sum_{\substack{m\in\mathbb Z\\ \abs{m}\leq 2k\delta}}
\int_{\abs{t}\leq\pi}
e^{i\langle x_{2n-1}-y_{2n-1},\eta_{2n-1}\rangle}
\chi(x)\tau_\delta\Big(\frac{\eta_{2n-1}}{k}\Big)e^{im(y_{2n-1}-t)}
v(e^{it}  x')dt\,d\eta_{2n-1}\,dy_{2n-1}.
\end{split}
\end{equation}
Let $\varepsilon>0$ be a small constant so that for every $(x_1,\ldots,x_{2n-1})\in{\rm Supp\,}\chi$, we have
\begin{equation}\label{e-gue150813II}
(x_1,\ldots,x_{2n-2},y_{2n-1})\in\set{x\in D;\, \chi_1(x)=1},\ \ \forall\abs{y_{2n-1}-x_{2n-1}}<\varepsilon.
\end{equation}
Let $\psi\in C^\infty_0((-1,1))$, $\psi=1$ on $\big[-\frac{1}{2},\frac{1}{2}\big]$. Put
\begin{equation}\label{e-gue150813III}
\begin{split}
I_0(x)=\frac{1}{(2\pi)^2}\sum_{m\in\mathbb Z,\abs{m}\leq 2k\delta}&\int_{\abs{t}\leq\pi}e^{i\langle x_{2n-1}-y_{2n-1},\eta_{2n-1}\rangle}\Big(1-\psi\Big(\frac{x_{2n-1}-y_{2n-1}}{\varepsilon}\Big)\Big)\chi(x)
\tau_\delta\Big(\frac{\eta_{2n-1}}{k}\Big)e^{imy_{2n-1}}\\
&\quad\quad\times e^{-imt}v(e^{it}  x')dtd\eta_{2n-1}dy_{2n-1},
\end{split}
\end{equation}
\begin{equation}\label{e-gue150813IV}
\begin{split}
I_1(x)=\frac{1}{(2\pi)^2}\sum_{m\in\mathbb Z}&\int_{\abs{t}\leq\pi}e^{i\langle x_{2n-1}-y_{2n-1},\eta_{2n-1}>}\psi\Big(\frac{x_{2n-1}-y_{2n-1}}{\varepsilon}\Big)\chi(x)
\tau_\delta\Big(\frac{\eta_{2n-1}}{k}\Big)e^{imy_{2n-1}}\\
&\quad\quad\times e^{-imt}v(e^{it}x')dtd\eta_{2n-1}dy_{2n-1},
\end{split}
\end{equation}
and
\begin{equation}\label{e-gue150813V}
\begin{split}
I_2(x)=\frac{1}{(2\pi)^2}\sum_{m\in\mathbb Z,\abs{m}>2k\delta}&\int_{\abs{t}\leq\pi}e^{i\langle x_{2n-1}-y_{2n-1},\eta_{2n-1}\rangle}\psi\Big(\frac{x_{2n-1}-y_{2n-1}}{\varepsilon}\Big)\chi(x)
\tau_\delta\Big(\frac{\eta_{2n-1}}{k}\Big)e^{imy_{2n-1}}\\
&\quad\quad\times e^{-imt}v(e^{it}  x')dtd\eta_{2n-1}dy_{2n-1}.
\end{split}
\end{equation}
It is clear that on $D$,
\begin{equation}\label{e-gue150813VI}
\chi F_{k,\delta}(1-\chi_1)u(x)=I_0(x)+I_1(x)-I_2(x).
\end{equation}
By using integration by parts with respect to $\eta_{2n-1}$ several times and \eqref{e-gue150813}, we conclude that for every $N>0$ and $m\in\mathbb N$, there is a constant $C_{N,m}>0$ independent of $u$ and $k$ such that
\begin{equation}\label{e-gue150813VII}
\norm{I_0(x)}_{C^m(D)}\leq C_{N,m}k^{-N}.
\end{equation}
Similarly, by using integration by parts with respect to $y_{2n-1}$ several times and \eqref{e-gue150813}, we conclude that for every $N>0$ and $m\in\mathbb N$, there is a constant $\Td C_{N,m}>0$ independent of $u$ and $k$ such that
\begin{equation}\label{e-gue150813VIII}
\norm{I_2(x)}_{C^m(D)}\leq\Td C_{N,m}k^{-N}.
\end{equation}
We can check that
\begin{equation}\label{e-gue150813aVIII}
\begin{split}
I_1(x)=\frac{1}{2\pi}\int e^{i\langle x_{2n-1}-y_{2n-1},\eta_{2n-1}\rangle}\psi\Big(\frac{x_{2n-1}-y_{2n-1}}{\varepsilon}\Big)
\chi(x)\tau_\delta\Big(\frac{\eta_{2n-1}}{k}\Big)v(x',y_{2n-1})d\eta_{2n-1}dy_{2n-1}.
\end{split}
\end{equation}
From \eqref{e-gue150813II} and \eqref{e-gue150813aVIII}, we deduce that
\begin{equation}\label{e-gue150813bVIII}
\mbox{$I_1(x)=0$ on $D$.}
\end{equation}
From \eqref{e-gue150813VI}, \eqref{e-gue150813VII}, \eqref{e-gue150813VIII} and \eqref{e-gue150813bVIII}, we conclude that for every $N>0$ and $m\in\mathbb N$, there is a constant $\hat C_{N,m}>0$ independent of $u$ and $k$ such that
\begin{equation}\label{e-gue150813gVIII}
\norm{\chi F_{k,\delta}(1-\chi_1)u(x)}_{C^m(D)}\leq\hat C_{N,m}k^{-N}.
\end{equation}
From \eqref{e-gue150813} and \eqref{e-gue150813gVIII}, it is not difficult to see that
\begin{equation}\label{e-gue150813vgVIII}
\mbox{$\sum\limits^{d_k}_{j=1}\abs{\chi F_{k,\delta}(1-\chi_1)f_j(x)}^2_{h^k}=O(k^{-\infty})$ on $D$},
\end{equation}
where $\set{f_1,\ldots,f_{d_k}}$ is an orthonormal basis for 
$\mathcal{H}^0_{b,\leq k\delta}(X,L^k)$. From \eqref{e-gue150813vgVIII}, the lemma follows.
\end{proof}

From Theorem~\ref{t-gue150816}, Lemma~\ref{l-gue131209} and Lemma~\ref{l-gue150813},
we see that the conditions {\rm (I)\,}, {\rm (II)\,} and {\rm (III)\,}
in Theorem~\ref{t-gue150811} holds. The proof of Theorem~\ref{t-gue150807} is completed.
\begin{proof}[Proof of Corollary \ref{c-gue150811}] We use the same notations as in Theorem \ref{t-gue150807}.
On the diagonal $x=y$, by (\ref{e-gue150807b}) we have $\varphi(x, x, t)=0$. From (\ref{sk2}) we have
\begin{equation}
P_{k, \delta}(x)=P_{k, \delta, s}(x, x)=\int g(x, x, t, k)dt+O(k^{-\infty})~\text{on}~D_0.
\end{equation}
Combining with \eqref{e-gue150807bI}, there exist $b_j(x)\in C^\infty(D_0)$,  $j\in\N_0$, such that
\begin{equation}\label{ee-160318}
P_{k, \delta}(x)=P_{k, \delta, s}(x, x)\sim\sum_{j=0}^{\infty}k^{n-j}b_j(x)
\:\:\text{in}~S^n_{\rm loc}(1; D_0).
\end{equation}
Let $D_1$ be another canonical coordinate neighborhood and $s_1$ 
be another local rigid CR frame of $L$ on $D_1$. 
Then from Theorem \ref{t-gue150807} and the above argument, on $D_1$ we have
\begin{equation}\label{ee1-160318}
P_{k, \delta}(x)=P_{k, \delta, s_1}(x, x)\sim\sum_{j=0}^{\infty}k^{n-j}\hat b_j(x)
\:\:\text{in}~S^n_{\rm loc}(1; D_1),
\end{equation}
where $\hat b_j(x)\in C^\infty(D_1)$,  $j\in\N_0$.
Since on $D\cap D_1$, we have $P_{k, \delta, s}(x, x)=P_{k, \delta, s_1}(x, x)=P_{k, \delta}(x)$, 
\eqref{ee-160318} and \eqref{ee1-160318} yield $b_j(x)=\hat b_j(x)$ on $D\cap D_1$, for all $j\in\N_0$. 
Hence, $b_j(x)\in C^\infty(X)$, for all $j\in\N_0$, and we get the conclusion of Corollary \ref{c-gue150811}.
\end{proof}


\subsection{Properties of the phase function}\label{s-gue150808y}

In this section, we collect some properties of the phase $\varphi$ in Theorem~\ref{t-gue150807}. We will use the same notations as in Theorem~\ref{t-gue150807}.

In view of \eqref{e-gue150807b}, we see that ${\rm Im\,}\varphi(x,y,t)\geq0$. Moreover, we can estimate ${\rm Im\,}\varphi(x,y,t)$ in some local coordinates.
\begin{thm}\label{t-gue140121I}
With the assumptions and notations used in Theorem~\ref{t-gue150807}, fix $p\in D$. We take canonical coordinates $x=(x_1,\ldots,x_{2n-1})$ defined in a small neighbourhood of $p$ so that $x(p)=0$, $\omega_0(p)=-dx_{2n-1}$ and $T^{1,0}_pX\oplus T^{0,1}_pX=\set{\sum^{2n-2}_{j=1}a_j\frac{\pr}{\pr x_j};\, a_j\in\mathbb C, j=1,\ldots,2n-2}$. If $D$ is small enough, then there is a constant $c>0$ such that for $(x,y,t)\in D\times D\times(-\delta,\delta)$,
\begin{equation}\label{e-gue140121II}\begin{split}
&{\rm Im\,}\varphi(x,y,t)\geq c\abs{x'-y'}^2,\ \ \forall (x,y,t)\in D\times D\times(-\delta,\delta),\\
&{\rm Im\,}\varphi(x,y,t)+\abs{\frac{\pr\varphi}{\pr t}(x,y,t)}^2\geq c\bigr(\abs{x_{2n-1}-y_{2n-1}}^2+\abs{x'-y'}^2\bigr),
\end{split}\end{equation}
where $x'=(x_1,\ldots,x_{2n-2})$, $y'=(y_1,\ldots,y_{2n-2})$, $\abs{x'-y'}^2=\sum^{2n-2}_{j=1}\abs{x_j-y_j}^2$.
\end{thm}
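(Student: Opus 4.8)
The plan is to deduce \eqref{e-gue140121II} directly from the phase estimates \eqref{e-gue150807b} of Theorem~\ref{t-gue150807}, after passing to canonical coordinates near $p$ of the normalized form prescribed in the statement. The key observation is that once such coordinates are in place, the underlying BRT complex coordinate $z=(z_1,\dots,z_{n-1})$, $z_j=x_{2j-1}+ix_{2j}$, is exactly the collection of the first $2n-2$ real coordinates $x'=(x_1,\dots,x_{2n-2})$; hence $\abs{z-w}^2=\abs{x'-y'}^2$ and $\abs{x-y}^2=\abs{x'-y'}^2+\abs{x_{2n-1}-y_{2n-1}}^2$, and substituting these identities into the second and third lines of \eqref{e-gue150807b} gives precisely the two inequalities in \eqref{e-gue140121II}.

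First I would construct the normalized canonical coordinates. Starting from an arbitrary BRT trivialization $(D,(z,\theta),\phi)$ centered at $p=0$ as in \eqref{e-can1}--\eqref{e-can2}, I would set $g(z)=-\sum_{j=1}^{n-1}\frac{\partial\phi}{\partial z_j}(0)\,z_j$ and replace the fiber coordinate $\theta$ by $\tilde\theta=\theta+2\,{\rm Im\,}g(z)$, leaving the $z$-coordinates unchanged. A short computation shows that $(D,(z,\tilde\theta),\tilde\phi)$ is again a BRT trivialization, with rigid weight $\tilde\phi=\phi+2\,{\rm Re\,}g$, and that $\frac{\partial\tilde\phi}{\partial z_j}(0)=0$ for $j=1,\dots,n-1$, hence $d\tilde\phi(0)=0$ since $\tilde\phi$ is real. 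Setting $z_j=x_{2j-1}+ix_{2j}$ and $\tilde\theta=x_{2n-1}$, the vanishing of $d\tilde\phi$ at $0$ makes the frame $Z_1,\dots,Z_{n-1}$ from \eqref{e-can2} reduce at $0$ to $\frac{\partial}{\partial z_1},\dots,\frac{\partial}{\partial z_{n-1}}$, so $T^{1,0}_0X\oplus T^{0,1}_0X$ is spanned over $\Complex$ by $\frac{\partial}{\partial x_1},\dots,\frac{\partial}{\partial x_{2n-2}}$ at $0$. For the transverse direction I would use that $\omega_0=-T^{\flat}$, the negative of the metric dual of $T$ with respect to $\langle\,\cdot\,|\,\cdot\,\rangle$: from $\langle\,T\,|\,T\,\rangle=1$, $T\perp(T^{1,0}X\oplus T^{0,1}X)$, and the description of $T^{1,0}_0X\oplus T^{0,1}_0X$ just obtained, one reads off $T^{\flat}=dx_{2n-1}$ at $0$, i.e.\ $\omega_0(p)=-dx_{2n-1}$. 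After shrinking $D$ so that these coordinates are defined on it, all the requirements of the statement are met.

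Then I would apply Theorem~\ref{t-gue150807} to this canonical coordinate patch, obtaining a phase function $\varphi$ satisfying \eqref{e-gue150807b} on $D\times D\times(-\delta,\delta)$, and substitute the two coordinate identities from the first paragraph to read off \eqref{e-gue140121II} with the same constant $c$. The only point requiring real care is the first step: checking that $\theta\mapsto\tilde\theta=\theta+2\,{\rm Im\,}g(z)$ preserves the BRT normal form --- equivalently that $Z_j=\frac{\partial}{\partial z_j}-i\frac{\partial\tilde\phi}{\partial z_j}\frac{\partial}{\partial\tilde\theta}$ with $\tilde\phi=\phi+2\,{\rm Re\,}g$ real and independent of $\theta$ --- and computing $\omega_0$ at $p$, which relies on the chosen normalization of the rigid metric $\langle\,\cdot\,|\,\cdot\,\rangle$ on $\Complex TX$ rather than on the BRT structure alone. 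Since the modification leaves the $z$-coordinates untouched, the quantity $\abs{z-w}$ occurring in \eqref{e-gue150807b} is literally $\abs{x'-y'}$, so no comparison of competing coordinate systems is needed and the argument closes at once.
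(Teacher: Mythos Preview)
Your approach is correct. The paper does not prove Theorem~\ref{t-gue140121I} in situ; it simply refers the reader to the proof of Theorem~4.24 in~\cite{Hsiao14}. Your route is more direct: once one is in a canonical coordinate patch with $z_j=x_{2j-1}+ix_{2j}$ and $x_{2n-1}=\theta$, the identities $\abs{z-w}^2=\abs{x'-y'}^2$ and $\abs{x-y}^2=\abs{x'-y'}^2+\abs{x_{2n-1}-y_{2n-1}}^2$ make the third and fourth lines of \eqref{e-gue150807b} literally coincide with \eqref{e-gue140121II}, so no further analysis of the phase is required. Your BRT coordinate change $\tilde\theta=\theta+2\,{\rm Im\,}g(z)$ with $g(z)=-\sum_j\pr_{z_j}\phi(0)\,z_j$ correctly produces a canonical chart with $d\tilde\phi(0)=0$, hence $Z_j(0)=\pr/\pr z_j$ and (via $\omega_0=-T^\flat$) $\omega_0(p)=-dx_{2n-1}$; this verifies that coordinates of the prescribed normalized form exist. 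It is worth noting, though, that this normalization is not needed for the estimates \eqref{e-gue140121II} themselves---they hold in any canonical chart by \eqref{e-gue150807b}---so the coordinate construction serves only to make the hypothesis of the theorem non-vacuous; the normalization is genuinely used elsewhere (e.g.\ in Theorem~\ref{t-gue140121II} and the immersion argument of Section~\ref{s-gue150819}).
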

For the proof of Theorem~\ref{t-gue140121I}, we refer the reader to the proof of Theorem 4.24 in~\cite{Hsiao14}.

In Section 4.4 of~\cite{Hsiao14}, the first author determined the tangential Hessian of $\varphi(x,y,t)$.
We denote as usual $x=(x_1,\ldots,x_{2n-1})=(z,\theta)$, $z_j=x_{2j-1}+ix_{2j}$, $j=1,\ldots,n-1$,
canonical local coordinates of $X$. We also use
$y=(y_1,\ldots,y_{2n-1})$, $w_j=y_{2j-1}+iy_{2j}$, $j=1,\ldots,n-1$.
\begin{thm}\label{t-gue140121II}
With the assumptions and notations used in Theorem~\ref{t-gue150807}, fix $(p,p,t_0)\in D\times D\times(-\delta,\delta)$, and let $\ol Z_{1,t_0},\ldots,\ol Z_{n-1,t_0}$ be an orthonormal rigid frame of $T^{1,0}_xX$ varying smoothly with $x$ in a neighbourhood of $p$, for which the Hermitian quadratic form $R^L_x-2t_0\mathcal{L}_x$ is diagonalized at $p$.
That is,
\[R^L_p\bigr(\ol Z_{j,t_0}(p),Z_{k,t_0}(p)\bigr)-2t_0\mathcal{L}_p\bigr(\ol Z_{j,t_0}(p),Z_{k,t_0}(p)\bigr)
=\lambda_j(t_0)\delta_{j,k},\ \ j,k=1,\ldots,n-1,\]
where $\lambda_j(t_0)>0$, $j=1,\ldots,n-1$.  Let $s$ be a local rigid CR frame of $L$ defined in
some small neighbourhood of $p$ such that
\begin{equation}\label{e-geusw13623}
\begin{split}
&x(p)=0,\ \ \omega_0(p)=-dx_{2n-1},\ \ T=\frac{\pr}{\pr x_{2n-1}}=\frac{\pr}{\pr\theta},\\
&\Big\langle\,\frac{\pr}{\pr x_j}(p)\,|\,\frac{\pr}{\pr x_k}(p)\,\Big\rangle=2\delta_{j,k},\ \ j,k=1,\ldots,2n-2,\\
&\ol Z_{j,t_0}(p)=\frac{\pr}{\pr z_j}+i\sum^{n-1}_{k=1}\tau_{j,k}\ol z_k\frac{\pr}{\pr x_{2n-1}}+O(\abs{z}^2),\ \ j=1,\ldots,n-1,\\
&\Phi(x)=\frac{1}{2}\sum^{n-1}_{l,k=1}\mu_{k,l}z_k\ol z_l+\sum^{n-1}_{l,k=1}\bigr(a_{l,k}z_lz_k+\ol a_{l,k}\ol z_l\ol z_k\bigr)+O(\abs{z}^3),
\end{split}
\end{equation}
where $\tau_{j,k}, \mu_{j,k}, a_{j,k}\in\Complex$, $\mu_{j,k}=\ol\mu_{k,j}$, $j, k=1,\ldots,n-1$.
Then there exists a neighbourhood of $(p,p)$
such that
\begin{equation}\label{e-guew13627}
\begin{split}
\varphi(x,y,t_0)&=t_0(-x_{2n-1}+y_{2n-1})
-\frac{i}{2}\sum^{n-1}_{j,l=1}(a_{l,j}+a_{j,l})(z_jz_l-w_jw_l)\\
&\quad+\frac{i}{2}\sum^{n-1}_{j,l=1}(\ol a_{l,j}+\ol a_{j,l})(\ol z_j\ol z_l-\ol w_j\ol w_l)+\frac{it_0}{2}\sum^{n-1}_{j,l=1}(\ol\tau_{l,j}-\tau_{j,l})(z_j\ol z_l-w_j\ol w_l)\\
&\quad-\frac{i}{2}\sum^{n-1}_{j=1}\lambda_j(t_0)(z_j\ol w_j-\ol z_jw_j)+\frac{i}{2}\sum^{n-1}_{j=1}\lambda_j(t_0)\abs{z_j-w_j}^2\\
&\quad+(-x_{2n-1}+y_{2n-1})f(x,y,t_0)+O(\abs{(x, y)}^3),
\end{split}
\end{equation}
where $f$ is smooth in a neighborhood of $(p,p,t_0)$ and $f(0,0,t_0)=0$
\end{thm}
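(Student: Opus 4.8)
The statement and its proof are those of \cite[Section~4.4]{Hsiao14}, now carried out in the presence of the rigid CR line bundle $(L,h)$. By Theorem~\ref{t-gue150811}, near $(p,p,t_0)$ the kernel $P_{k,\delta,s}(x,y)$ is a complex Fourier integral operator with phase $\varphi(x,y,t)$, and for each fixed $t$ the phase $\varphi(\cdot,\cdot,t)$ arises from solving the complex eikonal equation attached to $\ddbar_b$ (equivalently to the conjugated Kohn Laplacian $e^{-k\Phi}\Box^{(0)}_{b,k}e^{k\Phi}$) on the Fourier mode $m\approx tk$. Such a phase is determined, modulo terms vanishing to infinite order on $\set{x=y}$ and modulo equivalence of phase functions, by two pieces of data: the first-order values on the diagonal recorded in \eqref{e-gue150807b}, namely $d_x\varphi|_{x=y}=-2\,\mathrm{Im}\,\ddbar_b\Phi+t\omega_0$ and $d_y\varphi|_{x=y}=2\,\mathrm{Im}\,\ddbar_b\Phi-t\omega_0$; and the $2$-jet along the diagonal of the principal symbol of the model operator, which is governed by the curvature $R^L$ and the Levi form $\mathcal{L}$. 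Since the weight $\Phi$ of $h$ enters the model operator additively, the relevant $2$-jet is the one of \cite{Hsiao14} with the extra $\Phi$-terms carried along.

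First I would impose the normalizations \eqref{e-geusw13623}: canonical coordinates with $x(p)=0$, $\omega_0(p)=-dx_{2n-1}$, $T=\pr/\pr x_{2n-1}$, the metric Euclidean to order two at $p$, the frame $\ol Z_{j,t_0}=\pr_{z_j}+i\sum_k\tau_{j,k}\ol z_k\pr_{x_{2n-1}}+O(\abs{z}^2)$ diagonalizing $R^L_p-2t_0\mathcal{L}_p$ with eigenvalues $\lambda_j(t_0)>0$ (positivity holds for $\abs{t_0}<\delta$, $\delta$ small, by the hypothesis of Theorem~\ref{t-gue150811}), and the Taylor expansion $\Phi=\frac12\sum\mu_{k,l}z_k\ol z_l+\sum(a_{l,k}z_lz_k+\ol a_{l,k}\ol z_l\ol z_k)+O(\abs{z}^3)$. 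In these coordinates one computes $2\,\mathrm{Im}\,\ddbar_b\Phi$ to first order; substituting into the boundary identities above pins down the linear term $t_0(-x_{2n-1}+y_{2n-1})$ together with the purely holomorphic quadratic term $-\frac{i}{2}\sum(a_{l,j}+a_{j,l})(z_jz_l-w_jw_l)$ and its complex conjugate.

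The remaining, ``mixed'' quadratic coefficients are fixed by requiring the eikonal equation to hold to second order. Plugging the quadratic ansatz for $\varphi$ into that equation and matching the coefficients of $z_j\ol w_l$, $z_j\ol z_l$, $w_j\ol w_l$, etc., yields a finite linear system whose solution produces the Gaussian $\frac{i}{2}\sum\lambda_j(t_0)\bigl(\abs{z_j-w_j}^2-(z_j\ol w_j-\ol z_jw_j)\bigr)$ --- the part responsible for $\mathrm{Im}\,\varphi\geq c\abs{z-w}^2$ --- and, from the interaction of the $\tau_{j,k}$-twist of $\ol Z_{j,t_0}$ with the $t\omega_0$ piece, the term $\frac{it_0}{2}\sum(\ol\tau_{l,j}-\tau_{j,l})(z_j\ol z_l-w_j\ol w_l)$. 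All second-order contributions still involving $x_{2n-1}$ or $y_{2n-1}$ are then collected into $(-x_{2n-1}+y_{2n-1})f(x,y,t_0)$; that they can be so written, with $f(0,0,t_0)=0$, follows from the $T$-invariance (rigidity) built into the construction, which forces $\varphi$ to depend on the $\theta$-variables only through $x_{2n-1}-y_{2n-1}$ at leading order, exactly as in \cite[Section~4.4]{Hsiao14}. Putting these pieces together gives \eqref{e-guew13627}.

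The main obstacle is the bookkeeping in this last step: one must cleanly separate the three sources of quadratic terms --- the line-bundle weight $\Phi$ (the $a_{l,k}$, $\mu_{k,l}$), the CR structure (the $\tau_{j,k}$ and the Levi form), and the parameter $t_0$ --- and verify that they recombine precisely into \eqref{e-guew13627}, including smooth dependence on $t_0$ and the factorization of the error term. Because this is the computation of \cite[Section~4.4]{Hsiao14} with $\Phi$ carried along additively, I would present the adaptation rather than reproduce every coefficient match.
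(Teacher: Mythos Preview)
Your proposal is correct and matches the paper's approach: the paper does not give an independent proof of this theorem but simply refers to \cite[Section~4.4]{Hsiao14}, exactly as you do. Your sketch of how that computation goes (eikonal equation, diagonal boundary data from \eqref{e-gue150807b}, matching quadratic coefficients in the normalized coordinates \eqref{e-geusw13623}) is a faithful outline of that argument and in fact supplies more detail than the paper itself.
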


\section{Equivariant Kodaira embedding}\label{s-gue150819}

In this section, we will prove Theorem~\ref{t-gue150807I}.
Let
\[f_1\in\mathcal{H}^0_{b,\leq k\delta}(X,L^k),\ldots,
f_{d_k}\in\mathcal{H}^0_{b,\leq k\delta}(X,L^k)\]
be an orthonormal basis for $\mathcal{H}^0_{b,\leq k\delta}(X,L^k)$
with respect to $(\,\cdot\,|\,\cdot\,)$. On $D$, we write
\[F_{k,\delta}f_j=s^k\otimes\Td g_j,\ \ \Td g_j\in C^\infty(D),\ \ j=1,2,\ldots,d_{k}.\]
\begin{lem}\label{L:sk}
We have
\begin{equation}\label{e-gue150806VIIa}
\begin{split}
&P_{k,\delta,s}(x,y)=\sum^{d_{k}}_{j=1}e^{-k\Phi(x)}\Td g_j(x)\ol{\Td g_j(y)}e^{-k\Phi(y)},\\
&P_{k,\delta,s}(x,x)=\sum^{d_{k}}_{j=1}\abs{\Td g_j(x)}^2e^{-2k\Phi(x)}
=\sum^{d_{k}}_{j=1}\big|(F_{k,\delta}f_j)(x)\big|^2_{h^k}.
\end{split}
\end{equation}
In particular, \eqref{sk1} holds.
\end{lem}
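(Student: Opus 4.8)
The plan is to unwind the definition $P_{k,\delta}=F_{k,\delta}\circ\Pi_{k,\leq k\delta}\circ F_{k,\delta}$ and to use that $F_{k,\delta}$ is self-adjoint. First I would note that $F_{k,\delta}$ is self-adjoint on $L^2(X,L^k)$: the $Q^{(0)}_{m,k}$ are the orthogonal projections onto the mutually orthogonal closed subspaces $L^2_m(X,L^k)$, and $\tau_\delta$ is real-valued, so $(F_{k,\delta}u\,|\,v)=\sum_{m\in\mathbb Z}\tau_\delta(m/k)(Q^{(0)}_{m,k}u\,|\,v)=(u\,|\,F_{k,\delta}v)$. Since $\{f_1,\ldots,f_{d_k}\}$ is an orthonormal basis of $\mathcal{H}^0_{b,\leq k\delta}(X,L^k)$, the projection in \eqref{e-gue150806V} is $\Pi_{k,\leq k\delta}v=\sum_{j=1}^{d_k}(v\,|\,f_j)f_j$. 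Combining these, for $u\in L^2(X,L^k)$,
\[
P_{k,\delta}u=F_{k,\delta}\Big(\sum_{j=1}^{d_k}(F_{k,\delta}u\,|\,f_j)f_j\Big)=\sum_{j=1}^{d_k}(u\,|\,F_{k,\delta}f_j)\,F_{k,\delta}f_j,
\]
where in the last equality self-adjointness is used. (Each summand $F_{k,\delta}f_j$ indeed lies in $\mathcal{H}^0_{b,\leq k\delta}(X,L^k)$: only the Fourier components with $\abs{m}<k\delta$ survive, and these remain in $\ker\ddbar_b$ by \eqref{e-gue131207VI}.)

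Next I would read off the Schwartz kernel of this finite rank operator with respect to $dv_X$, as in \eqref{sk}: writing $(u\,|\,F_{k,\delta}f_j)=\int_X\big((F_{k,\delta}f_j)(y)\big)^*\big(u(y)\big)\,dv_X(y)$, where $\big((F_{k,\delta}f_j)(y)\big)^*\in(L^k_y)^*$ is the metric dual of $(F_{k,\delta}f_j)(y)$, one obtains
\[
P_{k,\delta}(x,y)=\sum_{j=1}^{d_k}(F_{k,\delta}f_j)(x)\otimes\big((F_{k,\delta}f_j)(y)\big)^*\in L^k_x\otimes(L^k_y)^*,
\]
which is \eqref{sk1}; restricting to $x=y$ and using the canonical identification $L^k_x\otimes(L^k_x)^*\cong\mathbb C$ under which $v\otimes v^*\mapsto\abs{v}^2_{h^k}$ gives $P_{k,\delta}(x,x)=\sum_{j=1}^{d_k}\abs{(F_{k,\delta}f_j)(x)}^2_{h^k}$. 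The independence of the chosen orthonormal basis follows at once: two such bases differ by a unitary matrix $(a_{j\ell})$, and $\sum_j\big(\sum_\ell a_{j\ell}v_\ell\big)\otimes\big(\sum_m a_{jm}v_m\big)^*=\sum_\ell v_\ell\otimes v_\ell^*$.

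Finally, for \eqref{e-gue150806VIIa} I would combine $P_{k,\delta,s}=U_{k,s}^{-1}P_{k,\delta}U_{k,s}$ from \eqref{e-gue150806VII} with the local expression $F_{k,\delta}f_j=s^k\otimes\Td g_j$ on $D$. On one hand $U_{k,s}^{-1}(F_{k,\delta}f_j)=e^{-k\Phi}\Td g_j$ on $D$; on the other, for $v\in L^2_{\mathrm{comp}}(D)$,
\[
\big(U_{k,s}v\,\big|\,F_{k,\delta}f_j\big)=\int_D v(y)\,\overline{\Td g_j(y)}\,e^{k\Phi(y)}\abs{s(y)}^{2k}_{h}\,dv_X(y)=\int_D v(y)\,\overline{\Td g_j(y)}\,e^{-k\Phi(y)}\,dv_X(y),
\]
where the only metric input is $\abs{s}^2_{h}=e^{-2\Phi}$, which turns the factor $e^{k\Phi}$ into $e^{-k\Phi}$. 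Substituting these into $P_{k,\delta,s}v=\sum_{j=1}^{d_k}\big(U_{k,s}v\,|\,F_{k,\delta}f_j\big)\,U_{k,s}^{-1}(F_{k,\delta}f_j)$ yields
\[
(P_{k,\delta,s}v)(x)=\int_D\Big(\sum_{j=1}^{d_k}e^{-k\Phi(x)}\Td g_j(x)\,\overline{\Td g_j(y)}\,e^{-k\Phi(y)}\Big)v(y)\,dv_X(y),
\]
so the kernel of $P_{k,\delta,s}$ is the first line of \eqref{e-gue150806VIIa}; setting $x=y$ and using $\abs{\Td g_j(x)}^2e^{-2k\Phi(x)}=\abs{\Td g_j(x)s^k(x)}^2_{h^k}=\abs{(F_{k,\delta}f_j)(x)}^2_{h^k}$ gives the second line. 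I do not expect a genuine obstacle: the argument is pure bookkeeping, and the only delicate point is tracking the twist $s^k$ and the weights $e^{\pm k\Phi}$ consistently through the isometry $U_{k,s}$ of \eqref{e-gue150806VI} and the localization convention \eqref{e-gue140824II}, so that the fiberwise Hermitian pairing on $L^k$ contributes exactly $\abs{s}^{2k}_{h}=e^{-2k\Phi}$.
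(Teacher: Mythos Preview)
Your proof is correct. The paper states Lemma~\ref{L:sk} without proof, treating it as a routine verification; your argument supplies exactly the missing details, and the key observations---self-adjointness of $F_{k,\delta}$ (from the reality of $\tau_\delta$ and orthogonality of the $Q^{(0)}_{m,k}$), the finite-rank expression $P_{k,\delta}u=\sum_j(u\,|\,F_{k,\delta}f_j)F_{k,\delta}f_j$, and the bookkeeping of the weight $e^{-2k\Phi}=\abs{s}^{2k}_h$ through the isometry $U_{k,s}$---are all handled cleanly.
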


From Theorem~\ref{t-gue150807}, we have

\begin{lem}\label{l-gue131007I}
Let $\delta>0$ be a small constant.
Then there exist constants $C_0>0$ and $k_0\in\N$ such that
for all $k\geq k_0$ and all $x\in X$ we have
\begin{equation}\label{e-gue131007aI}
\sum^{d_k}_{j=1}\big|F_{k,\delta}f_j(x)\big|^2_{h^k}\geq C_0k^n.
\end{equation}
Moreover, there is a constant $c_0>0$ and $k_0\in\N$ such that for $k\geq k_0$ and
every $x\in X$, there exists $j_0\in\set{1,2,\ldots,d_k}$ with
\begin{equation}\label{e-gue131019}
\big|F_{k,\delta}f_{j_0}(x)\big|^2_{h^k}\geq c_0.
\end{equation}
\end{lem}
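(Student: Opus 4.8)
The plan is to derive both inequalities from the diagonal asymptotics of Corollary~\ref{c-gue150811} combined with the sup-norm bound \eqref{e-gue150813}; no new hard analysis is needed, only the results already established.

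For \eqref{e-gue131007aI}, I would first use Lemma~\ref{L:sk} to rewrite the left-hand side as $P_{k,\delta}(x,x)=P_{k,\delta,s}(x,x)$, and then apply Corollary~\ref{c-gue150811}. Since $X$ is compact, the expansion $P_{k,\delta}(x,x)\sim\sum_jk^{n-j}b_j(x)$ in $S^n_{\mathrm{loc}}(1;X)$ yields $P_{k,\delta}(x,x)=k^nb_0(x)+O(k^{n-1})$ \emph{uniformly} in $x\in X$. It then remains to bound $b_0$ from below by a positive constant. Here I use that $\tau_\delta\equiv1$ on $[-\delta/2,\delta/2]$ and that, for the small $\delta$ fixed in Theorem~\ref{t-gue150807}, the Hermitian form $R^L_x-2t\mathcal{L}_x$ is positive definite for every $x\in X$ and every $\abs{t}\le\delta$, so that by \eqref{e-gue150811c}
\[
b_0(x)\ \ge\ (2\pi)^{-n}\int_{\abs{t}\le\delta/2}\det\bigl(R^L_x-2t\mathcal{L}_x\bigr)\,dt\ >\ 0 .
\]
The function $(x,t)\mapsto\det(R^L_x-2t\mathcal{L}_x)$ is continuous and strictly positive on the compact set $X\times[-\delta/2,\delta/2]$, hence bounded below there, so $b_0\ge c_b$ on $X$ for some constant $c_b>0$ independent of $x$. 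Combined with the uniform expansion this gives $P_{k,\delta}(x,x)\ge\frac{c_b}{2}k^n$ for all $x$ once $k\ge k_0$, which is \eqref{e-gue131007aI} with $C_0=c_b/2$.

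For \eqref{e-gue131019}, the extra ingredient is the dimension estimate $d_k=O(k^n)$. I would obtain it from \eqref{e-gue150813} (i.e.\ \cite[Theorem 2.4]{HL15}): since $\Pi_{k,\le k\delta}(x,x)=\sum_{j=1}^{d_k}\abs{f_j(x)}^2_{h^k}$ is the diagonal of the reproducing kernel of $\mathcal{H}^0_{b,\le k\delta}(X,L^k)$ and hence equals $\sup\{\abs{u(x)}^2_{h^k}:u\in\mathcal{H}^0_{b,\le k\delta}(X,L^k),\ \norm{u}=1\}$, the bound \eqref{e-gue150813} gives $\Pi_{k,\le k\delta}(x,x)\le Ck^n$ on $X$; integrating over $X$ and using $d_k=\mathrm{Tr}\,\Pi_{k,\le k\delta}=\int_X\Pi_{k,\le k\delta}(x,x)\,dv_X$ we get $d_k\le Ck^n\,\mathrm{vol}(X)$. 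Then, using \eqref{e-gue131007aI} and Lemma~\ref{L:sk} once more,
\[
\max_{1\le j\le d_k}\abs{F_{k,\delta}f_j(x)}^2_{h^k}\ \ge\ \frac{1}{d_k}\sum_{j=1}^{d_k}\abs{F_{k,\delta}f_j(x)}^2_{h^k}\ =\ \frac{P_{k,\delta}(x,x)}{d_k}\ \ge\ \frac{C_0\,k^n}{C\,k^n\,\mathrm{vol}(X)} ,
\]
so \eqref{e-gue131019} holds with $c_0=C_0/(C\,\mathrm{vol}(X))$, taking $j_0$ to be an index realizing the maximum.

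The only point requiring some care is the \emph{uniformity over $X$} in Corollary~\ref{c-gue150811}: one should observe that an asymptotic expansion in $S^n_{\mathrm{loc}}(1;X)$ on a compact manifold is automatically uniform, by covering $X$ with finitely many canonical coordinate patches and applying the symbol estimates to a subordinate partition of unity. Everything else is elementary: the positivity of $b_0$ follows directly from the positivity hypothesis on $(L,h)$ together with compactness, and the second estimate is just the Cauchy–Schwarz/pigeonhole step once the bound $d_k=O(k^n)$ is available.
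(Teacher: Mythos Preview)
Your proposal is correct and follows essentially the same route as the paper: identify the sum with $P_{k,\delta}(x,x)$, invoke the diagonal expansion (the paper cites Theorem~\ref{t-gue150807}, you cite its Corollary~\ref{c-gue150811}) together with compactness of $X$ to get the uniform lower bound, and then combine $d_k=O(k^n)$ with a pigeonhole step. The only cosmetic difference is that the paper quotes the dimension bound $d_k\le C_1k^n$ directly from \cite[Theorem~1.4]{HL15}, whereas you derive it from the pointwise sup-norm estimate \eqref{e-gue150813} via the trace identity; both are valid and neither shortcut is essential.
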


\begin{proof}
Theorem~\ref{t-gue150807} immediately implies the first assertion. 
We only need to prove the second. 
It is well known~\cite[Theorem 1.4]{HL15} that there is a constant $C_1>0$ such that
\begin{equation}\label{e-gue131019I}
{\rm dim\,}\mathcal{H}^0_{b,\leq k\delta}(X,L^k)=d_k\leq C_1k^n,
\end{equation}
where $C_1>0$ is a constant independent of $k$.
From \eqref{e-gue131019I} and \eqref{e-gue131007aI}, we have for every $x\in X$,
\[\begin{split}
&C_1k^n{\sup\,}\set{\abs{F_{k,\delta}f_j(x)}^2_{h^k};\, j=1,2,\ldots,d_k}\\
&\geq d_k{\sup\,}\set{\abs{F_{k,\delta}f_j(x)}^2_{h^k};\, j=1,2,\ldots,d_k}\\
&\geq\sum^{d_k}_{j=1}\abs{F_{k,\delta}f_j(x)}^2_{h^k}\geq C_0k^n.\end{split}\]
From this, \eqref{e-gue131019} follows.
\end{proof}

The (modified) Kodaira map $\Phi_{k,\delta}:X\To\Complex\mathbb P^{d_k-1}$ introduced in
\eqref{e-gue150807h} is explicitly defined as follows.
For $x_0\in X$, let $s$ be a local rigid CR frame of $L$ on an open neighbourhood $D\subset X$ of $x_0$, $\abs{s(x)}^2_{h}=e^{-2\Phi}$. On $D$, put $F_{k,\delta}f_j(x)=s^k\Td f_j(x)$, $\Td f_j(x)\in C^\infty(D)$, $j=1,\ldots,d_k$. Then,
\begin{equation}\label{e-gue131019II}
\Phi_{k,\delta}(x_0)=[\Td f_1(x_0),\ldots,\Td f_{d_k}(x_0)]\in\Complex\mathbb P^{d_k-1}.
\end{equation}
In view of \eqref{e-gue131019}, we see that $\Phi_{k,\delta}$ is well-defined as a
smooth CR map from $X$ to $\Complex\mathbb P^{d_k-1}$.
We wish to prove that $\Phi_{k,\delta}$ is an embedding for $k$ large enough.
Since $X$ is compact, a smooth map is an embedding if and only if
it is an injective immersion.
\begin{thm}\label{t-gue131019}
The map $\Phi_{k,\delta}$ is an immersion for $k$ large enough.
\end{thm}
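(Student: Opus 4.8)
The plan is to show that $d\Phi_{k,\delta}$ is injective at every point $x_0\in X$ for $k$ large. The argument is local: work in a canonical coordinate patch $D$ centered at $x_0=0$ with canonical coordinates $x=(z,\theta)=(x_1,\ldots,x_{2n-1})$ normalized as in Theorem~\ref{t-gue140121II}, and choose a local frame $s$ of $L$ with $\abs{s}^2_h=e^{-2\Phi}$ and $\Phi$ in the normal form \eqref{e-geusw13623}. Writing $F_{k,\delta}f_j=s^k\Td f_j$, the Kodaira map in this chart is $x\mapsto[\Td f_1(x),\ldots,\Td f_{d_k}(x)]$. By \eqref{e-gue131019} there is $j_0$ with $\abs{\Td f_{j_0}(x_0)}^2e^{-2k\Phi(x_0)}\geq c_0$, so on a neighbourhood of $x_0$ we may pass to the affine chart of $\Complex\mathbb P^{d_k-1}$ where the $j_0$-th coordinate is $1$ and express $\Phi_{k,\delta}$ by the ratios $\Td f_j/\Td f_{j_0}$. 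Injectivity of the differential of $\Phi_{k,\delta}$ at $x_0$ is then equivalent to the statement that the $1$-jets at $x_0$ of the functions $\Td f_j$, $j=1,\ldots,d_k$, separate tangent vectors, i.e. the linear map $V\mapsto\big(V\Td f_j(x_0)-\tfrac{V\Td f_{j_0}(x_0)}{\Td f_{j_0}(x_0)}\Td f_j(x_0)\big)_j$ on $T_{x_0}X$ is injective; equivalently, there is no nonzero $V\in T_{x_0}X$ and $\lambda\in\Complex$ with $V\Td f_j(x_0)=\lambda\,\Td f_j(x_0)$ for all $j$.

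The key mechanism is to test this against the reproducing/Fourier-Szeg\H{o} kernel. For $x,y$ near $x_0$ one has the pointwise identity, from \eqref{e-gue150806VIIa},
\begin{equation}\label{e-planrep}
P_{k,\delta,s}(x,y)=\sum_{j=1}^{d_k}e^{-k\Phi(x)}\Td f_j(x)\,\ol{\Td f_j(y)}\,e^{-k\Phi(y)}.
\end{equation}
Apply $\ol{\pr}_{y}$-type derivatives or, more precisely, apply a differential operator $\mathcal D_y$ in $y$ at $y=x_0$ that picks out the relevant component of the $1$-jet of the $\ol{\Td f_j}$'s, and likewise a derivative in $x$; then the required non-degeneracy is read off from the asymptotics of the derivatives of $P_{k,\delta,s}$. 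Concretely, suppose for contradiction that along a subsequence there are unit vectors $V_k\to V\neq0$ and scalars $\lambda_k$ with $V_k\Td f_j(x_0)=\lambda_k\Td f_j(x_0)$ for all $j$. Feed this into \eqref{e-planrep}: differentiating once in $x$ and once in $y$ and evaluating at $x=y=x_0$, the left side is governed by the oscillatory integral representation \eqref{sk2}, whose phase $\varphi$ and leading symbol $g_0$ are given in \eqref{e-gue150807b}, \eqref{e-gue150807a} and whose tangential Hessian is the explicit expression \eqref{e-guew13627} of Theorem~\ref{t-gue140121II}; the right side degenerates because of the relation $V_k\Td f_j(x_0)=\lambda_k\Td f_j(x_0)$. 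Carrying out the stationary-phase computation of the mixed second derivatives of $\int e^{ik\varphi}g\,dt$ at the diagonal point $x_0$ (rescaling $z\mapsto z/\sqrt k$, $\theta\mapsto\theta/k$, as is standard for such Szeg\H{o}-type kernels), the leading-order term involves the positive-definite matrix $R^L_{x_0}-2t\mathcal L_{x_0}$ integrated against $\abs{\tau_\delta(t)}^2$, which by positivity of $L$ is strictly positive; this forces $V=0$, a contradiction.

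In more detail, the quantities that need to be computed are: (i) $\sum_j e^{-2k\Phi(x_0)}\abs{\Td f_j(x_0)}^2=P_{k,\delta,s}(x_0,x_0)\sim b_0(x_0)k^n$ with $b_0(x_0)>0$ by Corollary~\ref{c-gue150811} and \eqref{e-gue150811c}; (ii) the ``first moments'' $\sum_j \ol{(\text{jet})}\, \Td f_j$, which come from $\pr_x P_{k,\delta,s}(x,x_0)\big|_{x=x_0}$ and can be evaluated from \eqref{e-gue150807b} (the term $-2{\rm Im}\,\ol\pr_b\Phi(x_0)+t\omega_0(x_0)$); and (iii) the ``second moment'' matrix $\sum_j \abs{(\text{jet})}^2\abs{\Td f_j}^2$ from $\pr_x\pr_y P_{k,\delta,s}(x,y)\big|_{x=y=x_0}$, whose leading term after rescaling is a Gaussian integral producing a positive-definite quadratic form on $T_{x_0}X$. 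The Cauchy--Schwarz obstruction: if $V_k\Td f_j(x_0)$ were proportional to $\Td f_j(x_0)$ for every $j$, then equality would hold in the Cauchy--Schwarz inequality relating (ii), (iii) and (i), but the strict positive-definiteness of the second-moment form (coming from positivity of $R^L-2t\mathcal L$ for $\abs t<\delta$) makes equality impossible for $V_k$ bounded away from $0$; hence $d\Phi_{k,\delta}$ is injective for $k$ large, uniformly in $x_0$ by compactness.

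The main obstacle I anticipate is the bookkeeping in step (iii): extracting the precise leading coefficient of the mixed Hessian $\pr_x\pr_y$ of the oscillatory integral $\int e^{ik\varphi(x,y,t)}g(x,y,t,k)\,dt$ at the diagonal, including correctly accounting for the non-isotropic scaling (the $\theta$-variable scales like $1/k$ while the $z$-variables scale like $1/\sqrt k$) and for the contributions of $d_x\varphi$, $d_y\varphi$ at $x=y$ in \eqref{e-gue150807b} which do not vanish. This is exactly the sort of stationary-phase computation carried out for the Bergman kernel in Ma--Marinescu and for Szeg\H{o} kernels in \cite{Hsiao14, HL15}; the normal form \eqref{e-guew13627} for $\varphi$ is tailored precisely to make this computation tractable, and one should also invoke the off-diagonal decay ${\rm Im}\,\varphi\geq c\abs{z-w}^2$ from \eqref{e-gue150807b} to localize the integral near the diagonal. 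Once the leading term of the Hessian is identified as (a positive multiple of) $\int\abs{\det(R^L_{x_0}-2t\mathcal L_{x_0})}\,(R^L_{x_0}-2t\mathcal L_{x_0})\,\abs{\tau_\delta(t)}^2\,dt$ together with the transversal direction contribution, positivity of $L$ closes the argument.
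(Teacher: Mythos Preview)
Your route is viable but differs from the paper's. The paper does not argue via the Gram determinant of the kernel; instead it constructs, for each $p$, explicit peak sections $g^1_k,\ldots,g^n_k\in\mathcal H^0_{b,\leq k\delta}(X,L^k)$ (Lemma~\ref{l-gue151009}) vanishing at $p$ and whose rescaled first derivatives at $p$ form, up to $o(1)$, a diagonal matrix: $\abs{k^{-1/2}\pr_{z_j}(e^{-k\Phi}\Td g^j_k)(0)}\geq C_0$, $\abs{k^{-1}\pr_{x_{2n-1}}(e^{-k\Phi}\Td g^n_k)(0)}\geq C_0$, and all off-diagonal entries $\leq\epsilon_k\to0$. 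These sections are produced by applying $\Pi_{k,\leq k\delta}F_{k,\delta}$ to test functions of the form $e^{k\Phi}\,w_j\,\chi_{\epsilon_0}(\sqrt k\,y')\chi(ky_{2n-1}/\log k)$ (and a $y_{2n-1}$-variant for $j=n$), and their values and derivatives at $p$ are read off from the oscillatory representation \eqref{sk2} via the rescaling \eqref{e-gue151009f}--\eqref{e-gue151016III} and the Hessian formula \eqref{e-guew13627}. The Jacobian of $(\Td g^1_k/\Td f_1,\ldots,\Td g^n_k/\Td f_1)$ at $p$ is then seen directly to be injective; the paper never sets up a Cauchy--Schwarz equality-case problem. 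This constructive approach separates the directions from the outset and thereby sidesteps the anisotropic case-splitting you anticipate.

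Your reproducing-kernel argument can be completed, but one point needs correction. The quantity you must show positive is $D_k(V)=P_{k,\delta,s}(0,0)\cdot V_xV_yP_{k,\delta,s}\big|_{0}-\abs{V_xP_{k,\delta,s}\big|_0}^2$; in the normal form \eqref{e-geusw13623} with $V=a'\cdot\pr'+b\,\pr_{x_{2n-1}}$ one finds
\[
D_k(V)\sim b^2k^{2n+2}\Bigl[\int g_0\,dt\int t^2g_0\,dt-\Bigl(\int tg_0\,dt\Bigr)^2\Bigr]
+k^{2n+1}\int g_0\,dt\cdot\sum_j\abs{a_j}^2\int\lambda_j(t)g_0\,dt+O\bigl(bk^{2n+1}\bigr)+O(k^{2n}).
\]
The first bracket is positive by strict Cauchy--Schwarz \emph{in the $t$-variable} (variance of the probability measure proportional to $g_0(0,0,t)\,dt$), not by positivity of $R^L-2t\mathcal L$; the latter controls only the $z$-directions through the $\lambda_j(t)>0$. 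Your closing sentence conflates these two distinct positivity mechanisms. Also, ``strict positive-definiteness of the second-moment form'' only yields $\sum_j\abs{Vg_j}^2>0$, not $D_k(V)>0$; you really need the explicit leading asymptotics of $D_k(V)$ and then a case split (e.g.\ $\abs{b}\gtrless k^{-1/4}$) to absorb the $O(bk^{2n+1})$ cross terms.
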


To prove Theorem~\ref{t-gue131019}, we need some preparations. Fix $p\in X$ and let $s$ be a local rigid CR frame of $L$ on a canonical coordinate patch $D$, $p\in D$, $\abs{s}^2_{h}=e^{-2\Phi}$, with canonical local coordinates $x=(x_1,\ldots,x_{2n-1})=(z,\theta)$. We take canonical coordinates $x$ and $s$ so that \eqref{e-geusw13623} hold.
We identify $D$ with an open set in $\Real^{2n-1}$. For $r>0$, put
\[
D_r:=\set{x\in\Real^{2n-1};\, \abs{x_j}<r, j=1,2,\ldots,2n-1}.
\]
For $x=(x_1,\ldots,x_{2n-1})$, put
\[F^*_kx:=\left(\frac{x_1}{\sqrt{k}},\ldots,\frac{x_{2n-2}}{\sqrt{k}},\frac{x_{2n-1}}{k}\right).\]
From \eqref{e-guew13627}, we can check that
\begin{equation}\label{e-gue151009f}
ki\varphi(0,F^*_ky,t)=iy_{2n-1}t+i\varphi_0(w,t)+r_k(y,t)\ \ \mbox{on $D_{\log k}$},
\end{equation}
where $r_k(y,t)\in C^\infty(D_{\log k}\times(-\delta,\delta))$ and
$\varphi_0(w,t)\in C^\infty(\Real^{2n-2}\times(-\delta,\delta))$ independent of $k$.
Moreover,
for every $\alpha\in\mathbb N^{2n-1}$, we have
\begin{equation}\label{e-gue151009fI}
\lim_{k\To\infty}\sup_{(y,t)\in D_{\log k}\times(-\delta,\delta)}\abs{\pr^\alpha_yr_k(y,t)}=0,
\end{equation}
and there exists a constant $C>0$ such that
\begin{equation}\label{e-gue151009fII}
\begin{split}
&\varphi_0(0,t)=0,\ \ \forall t\in(-\delta,\delta),\\
&\varphi_0(w,t)=\varphi_0(-w,t),\ \ \forall (w,t)\in\Real^{2n-2}\times(-\delta,\delta),\\
&\int e^{-{\rm Im\,}\varphi_0(w,t)}dy_1\ldots dy_{2n-2}\leq C<\infty,\ \ \forall t\in(-\delta,\delta).
\end{split}
\end{equation}
Take $\chi\in C^\infty_0((-1,1))$ with $0\leq\chi\leq1$, $\chi(x)=1$ on $[-\frac{1}{2},\frac{1}{2}]$
and $\chi(t)=\chi(-t)$, for every $t\in\Real$. For $\epsilon>0$,
put $\chi_\epsilon(x):=\epsilon^{-1}\chi(\epsilon^{-1}x)$.
Let $g_0(x,y,t)\in C^\infty(D\times D\times(-\delta,\delta))$ be as \eqref{e-gue150807bI}. We can check that
\begin{equation}\label{e-gue151016}
\begin{split}
&\lim_{\epsilon\To0}\int e^{i\varphi_0(w,t)+iy_{2n-1}t}g_0(0,0,t)\chi_\epsilon(y_1)\ldots\chi_\epsilon(y_{2n-2})dydt\\
&=\int e^{iy_{2n-1}t}g_0(0,0,t)dtdy_{2n-1}\int_{\Real^{2n-2}}\chi(y_1)\ldots\chi(y_{2n-2})dy_1\ldots dy_{2n-2}\\
&=(2\pi)g_0(0,0,0)\int_{\Real^{2n-2}}\chi(y_1)\ldots\chi(y_{2n-2})dy_1\ldots dy_{2n-2}\neq0,
\end{split}
\end{equation}
\begin{equation}\label{e-gue151016I}
\begin{split}
&\lim_{\epsilon\To0}\int e^{i\varphi_0(w,t)+iy_{2n-1}t}g_0(0,0,t)\epsilon^{-2}\abs{y_j}^2\chi_\epsilon(y_1)\ldots\chi_\epsilon(y_{2n-1})dydt\\
&=\int e^{iy_{2n-1}t}g_0(0,0,t)dtdy_{2n-1}\int_{\Real^{2n-2}}\abs{y_j}^2\chi(y_1)\ldots\chi(y_{2n-2})dy_1\ldots dy_{2n-2}\\
&=(2\pi)g_0(0,0,0)\int_{\Real^{2n-2}}\abs{y_j}^2\chi(y_1)\ldots\chi(y_{2n-2})dy_1\ldots dy_{2n-2}\neq0,\ \ j=1,2,\ldots,2n-2,
\end{split}
\end{equation}
and
\begin{equation}\label{e-gue151016II}
\begin{split}
&\lim_{\epsilon\To0}\int e^{i\varphi_0(w,t)+iy_{2n-1}t}(-it)g_0(0,0,t)\chi_\epsilon(y_1)\ldots\chi_\epsilon(y_{2n-1})y_{2n-1}dydt\\
&=\int e^{iy_{2n-1}t}(-ity_{2n-1})g_0(0,0,t)dtdy_{2n-1}\int_{\Real^{2n-2}}\chi(y_1)\ldots\chi(y_{2n-2})dy_1\ldots dy_{2n-2}\\
&=(2\pi)g_0(0,0,0)\int_{\Real^{2n-2}}\chi(y_1)\ldots\chi(y_{2n-2})dy_1\ldots dy_{2n-2}\neq0.
\end{split}
\end{equation}
From \eqref{e-gue151016}, \eqref{e-gue151016I} and \eqref{e-gue151016II}, we deduce that there is a $\epsilon_0>0$, $\epsilon_0$ small, such that
\begin{equation}\label{e-gue151016III}
\begin{split}
&\int e^{i\varphi_0(w,t)+iy_{2n-1}t}g_0(0,0,t)\chi_{\epsilon_0}(y_1)\ldots\chi_{\epsilon_0}(y_{2n-2})dydt=:V_0\neq0,\\
&\int e^{i\varphi_0(w,t)+iy_{2n-1}t}g_0(0,0,t)\abs{y_j}^2\chi_{\epsilon_0}(y_1)\ldots\chi_{\epsilon_0}(y_{2n-2})dydt=:V_j\neq0,\ \ j=1,2,\ldots,2n-2,\\
&\int e^{i\varphi_0(w,t)+iy_{2n-1}t}(-ity_{2n-1})g_0(0,0,t)\chi_{\epsilon_0}(y_1)\ldots\chi_{\epsilon_0}(y_{2n-2})dydt=:V\neq0.
\end{split}
\end{equation}


Assume that $p\in D_0\Subset D$. We need

\begin{lem}\label{l-gue151009}
With the notations above, there is a $K_0>0$ independent of the point $p$ such that for all $k\geq K_0$, we can find
\[g^j_k\in\mathcal{H}^0_{b,\leq k\delta}(X,L^k),\ \ j=1,\ldots,n,\]
such that if we put $F_{k,\delta}g^j_k=s^k\Td g^j_k$ on $D$, $j=1,\ldots,n$, then
\begin{equation}\label{e-gue151009}
\begin{split}
&\sum^n_{j=1}\abs{(e^{-k\Phi}\Td g^j_k)(p)}=0,\\
&\abs{\frac{1}{\sqrt{k}}\pr_{\ol z_t}\bigr(e^{-k\Phi}\Td g^j_k\bigr)(0)}\leq\epsilon_k,\ \ j=1,2,\ldots,n,\ \ t=1,2,\ldots,n-1,\\
&\abs{\frac{1}{\sqrt{k}}\pr_{z_t}\bigr(e^{-k\Phi}\Td g^j_k\bigr)(0)}\leq\epsilon_k,\ \ j=1,2,\ldots,n,\ \ t=1,2,\ldots,n-1,\ \  j\neq t,\\
&\abs{\frac{1}{\sqrt{k}}\pr_{z_j}\bigr(e^{-k\Phi}\Td g^j_k\bigr)(0)}\geq C_0,\ \ j=1,\ldots,n-1,\\
&\abs{\frac{1}{k}\pr_{x_{2n-1}}\bigr(e^{-k\Phi}\Td g^n_k\bigr)(0)}\geq C_0,
\end{split}
\end{equation}
where $C_0>0$ is a constant independent of $k$ and the point $p$, $\epsilon_k$ is a sequence independent of $p$ with $\lim_{k\To\infty}\epsilon_k=0$.
\end{lem}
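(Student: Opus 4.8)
The plan is to realize each $g^j_k$ as the image under the weighted Fourier--Szeg\H{o} projector of an explicit, parabolically rescaled test section concentrated near $p$, and to extract its $1$--jet at $p$ directly from the kernel asymptotics of Theorem~\ref{t-gue150807}. The first step is a reduction to the kernel: for any $\gamma\in C^\infty_0(D_0)$ put $g:=\Pi_{k,\leq k\delta}F_{k,\delta}(\gamma e^{k\Phi}s^k)$, the section $\gamma e^{k\Phi}s^k$ extended by $0$. Since $F_{k,\delta}$ (which acts on the $m$-th Fourier mode by the scalar $\tau_\delta(m/k)$ and commutes with $\ddbar_b$) and $\Pi_{k,\leq k\delta}$ preserve $\mathcal H^0_{b,\leq k\delta}(X,L^k)$, we get $g\in\mathcal H^0_{b,\leq k\delta}(X,L^k)$; and since $P_{k,\delta}=F_{k,\delta}\Pi_{k,\leq k\delta}F_{k,\delta}$ while $P_{k,\delta}(\,\cdot\,e^{k\Phi}s^k)=(P_{k,\delta,s}\,\cdot\,)\,e^{k\Phi}s^k$ by \eqref{e-gue150806VII}, writing $F_{k,\delta}g=s^k\widetilde g$ on $D$ gives the basic identity
\[ (e^{-k\Phi}\widetilde g)(x)=(P_{k,\delta,s}\gamma)(x)=\int_D P_{k,\delta,s}(x,y)\,\gamma(y)\,dv_X(y). \]
Thus the value and all first derivatives of $e^{-k\Phi}\widetilde g$ at $p=0$ are obtained by differentiating the kernel $P_{k,\delta,s}(x,y)=\int e^{ik\varphi(x,y,t)}g(x,y,t,k)\,dt+O(k^{-\infty})$ in $x$ and integrating against $\gamma$; everything reduces to a good choice of $\gamma=\gamma^j_k$.

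Next I would perform the rescaling $y=F^*_ky'$, so that $dv_X(y)=m(F^*_ky')k^{-n}\,dy'$ with $m$ the density of $dv_X$. Using the Taylor expansion \eqref{e-guew13627}, the normalized form \eqref{e-gue151009f}--\eqref{e-gue151009fII} of $\varphi(0,F^*_ky',t)$, and $g\sim\sum_jg_jk^{n-j}$ with $g_0(p,p,t)=(2\pi)^{-n}|\det(R^L_p-2t\mathcal L_p)||\tau_\delta(t)|^2$, the rescaled kernel is governed by the model phase $e^{iy'_{2n-1}t+i\varphi_0(w',t)}$ and the weight $g_0(p,p,t)$, while the $x$-derivatives at $p$ bring down, respectively, $ik\,\partial_{z_j}\varphi\sim\sqrt k\,\lambda_j(t)\,\overline{w'_j}$, $ik\,\partial_{\bar z_t}\varphi=O(1)$ (the $\overline{w'}$-linear term cancels --- this is the CR mechanism) and $ik\,\partial_{x_{2n-1}}\varphi\sim-ikt$, where $\lambda_j(t)>0$ are the eigenvalues of $R^L_p-2t\mathcal L_p$. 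The $k^{-n}$ of $dv_X$ cancels the $k^n$ of $g_0$, leaving exactly the powers $\sqrt k$ (for $\partial_z$) and $k$ (for $\partial_{x_{2n-1}}$) appearing in the statement, and the surviving $y'$-integrals are precisely the model integrals $V_0,V_j,V$ of \eqref{e-gue151016III}.

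For the test sections I would take a fixed even $\chi\in C^\infty_0$, a small $\epsilon_0>0$ and a cutoff $\psi$, and set for $j=1,\dots,n-1$
\[\gamma^j_k(y)=c^j_k\,(y_{2j-1}+iy_{2j})\prod_{l=1}^{2n-2}\chi_{\epsilon_0}(\sqrt k\,y_l)\,\psi(y_{2n-1})\]
with $c^j_k$ of size $\sqrt k$, and $\gamma^n_k(y)=c^n_k\,(y_{2n-1}+\beta_k)\prod_l\chi_{\epsilon_0}(\sqrt k\,y_l)\,\psi(y_{2n-1})$ with $c^n_k$ of size $k$ and $\beta_k$ of size $k^{-1}$. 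After rescaling, the monomial factor $w'_j$ (resp.\ the affine factor in $y'_{2n-1}$) pairs with the derivative factors above so that, by the parity $w'\mapsto-w'$, the value at $p$, all $\partial_{\bar z_t}$, and the off-diagonal $\partial_{z_t}$ $(t\ne j)$ have vanishing leading term (hence are $o(\sqrt k)$, up to an $O(\epsilon_0^2)$ residue), whereas $\tfrac1{\sqrt k}\partial_{z_j}(e^{-k\Phi}\widetilde g^j_k)(p)$ tends to a nonzero multiple of $\lambda_j(0)\,g_0(p,p,0)$ and $\tfrac1k\partial_{x_{2n-1}}(e^{-k\Phi}\widetilde g^n_k)(p)$ to a nonzero multiple of $g_0(p,p,0)$, $\beta_k$ being chosen so that the leading value of $\gamma^n_k$ cancels. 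By compactness of $X$ and positivity of $(L,h)$, $g_0(p,p,0)=(2\pi)^{-n}|\det R^L_p|$ and the eigenvalues $\lambda_j(0)$ of $R^L_p$ are uniformly bounded below, so these limits are $\ge 2C_0>0$ uniformly in $p$; taking $\epsilon_0$ small (if necessary depending slowly on $k$ to absorb the $O(\epsilon_0^2)$ residue) and $k$ large gives the inequalities with a suitable $\epsilon_k\to0$. To force $(e^{-k\Phi}\widetilde g^j_k)(p)$ to be \emph{exactly} $0$, let $g_{0,k}$ be the peak section (test function $c^0_k\prod_l\chi_{\epsilon_0}(\sqrt k\,y_l)\psi(y_{2n-1})$ with $c^0_k$ normalized so $(e^{-k\Phi}\widetilde g_{0,k})(p)=1$, which is possible for large $k$ by \eqref{e-gue151016} uniformly in $p$), whose first derivatives at $p$ are $O(1)$; replacing $g^j_k$ by $g^j_k-(e^{-k\Phi}\widetilde g^j_k)(p)\,g_{0,k}$ kills the value and, since the raw values are $o(1)$, perturbs every $1$-jet by $o(1)$, preserving all estimates. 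Uniformity of $K_0,\epsilon_k,C_0$ follows because every quantity used ($R^L_p$, $\mathcal L_p$, the $\lambda_j(t)$, the $2$-jet data in \eqref{e-geusw13623}, the remainder $r_k$ of \eqref{e-gue151009f}, the density $m$) depends continuously on $p$ over the compact $X$.

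The main obstacle is the case $j=n$, i.e.\ the Reeb ($x_{2n-1}$) direction. Any test section with a non-trivial $\tfrac1k\partial_{x_{2n-1}}$-jet necessarily has an $O(1)$ value at $p$ and forces $c^n_k$ of size $k$, so a crude subtraction of the peak section to zero the value would also change $\tfrac1k\partial_{x_{2n-1}}$ by $O(1)$ and could annihilate it. The resolution is to tune $\beta_k$ (equivalently, to use $\partial_{x_{2n-1}}(e^{-k\Phi}\widetilde{u_m})(p)=im(e^{-k\Phi}\widetilde{u_m})(p)$ on Fourier components and arrange that the two building blocks of $g^n_k$ have $T$-weight centroids differing by $\Theta(k)$ while their values at $p$ coincide) so that the residual value is already $o(1)$, after which the harmless final subtraction applies. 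Making this, together with the vanishing of the off-diagonal $z$- and all $\bar z$-derivatives, simultaneously uniform in $p$ is the technical heart of the argument.
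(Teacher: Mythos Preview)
Your approach is essentially the paper's: apply $P_{k,\delta}$ to parabolically rescaled test sections, read off the $1$-jet at $p$ from the kernel asymptotics and the model integrals $V_0,V_j,V$, and subtract a peak section to kill the value. Two points of execution are worth aligning with the paper.

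First, the $y_{2n-1}$ cutoff. You take a fixed $\psi(y_{2n-1})$; after the rescaling $F^*_k$ this becomes $\psi(y'_{2n-1}/k)$, whose support $|y'_{2n-1}|\lesssim k$ lies far outside the region $D_{\log k}$ on which the model expansion \eqref{e-gue151009f}--\eqref{e-gue151009fI} is asserted. The paper instead uses $\chi\big(\tfrac{k}{\log k}\,y_{2n-1}\big)$, so that after rescaling one has $\chi(y'_{2n-1}/\log k)$ with support exactly in $D_{\log k}$, and the passage to the model phase $e^{iy'_{2n-1}t+i\varphi_0(w',t)}$ with remainder $r_k\to0$ is immediate. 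Your choice can be salvaged by an extra integration-by-parts in $t$ (using $|\partial_t\varphi|\gtrsim|y_{2n-1}|$ away from the diagonal) to show the far region is negligible, but you have not supplied this, and the paper's scaling avoids the issue entirely.

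Second, and more importantly, your ``main obstacle'' for $j=n$ is not an obstacle, and the $\beta_k$ tuning is unnecessary. The key observation you are missing is that the peak section $u_k$ satisfies
\[
\lim_{k\to\infty}\frac{1}{k}\,\partial_{x_{2n-1}}\big(e^{-k\Phi}\widetilde u_k\big)(0)
=\int e^{i\varphi_0(w,t)+ity_{2n-1}}(-it)\,g_0(0,0,t)\,\chi_{\epsilon_0}(y_1)\cdots\chi_{\epsilon_0}(y_{2n-2})\,dy\,dt=0,
\]
since the $y_{2n-1}$-integral produces $2\pi\delta(t)$ and the factor $-it$ then vanishes at $t=0$. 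So the normalized Reeb derivative of the peak section is $o(1)$, not merely $O(1)$. Hence one simply takes $v^n_k=\Pi_{k,\le k\delta}F_{k,\delta}\big(e^{k\Phi}\,ky_{2n-1}\cdot\text{(cutoffs)}\big)$, checks directly that $|e^{-k\Phi}\widetilde v^n_k(0)|\le E_0$ is bounded while $\tfrac{1}{k}\partial_{x_{2n-1}}(e^{-k\Phi}\widetilde v^n_k)(0)\to V\neq0$, and sets
\[
g^n_k=v^n_k-\frac{(e^{-k\Phi}\widetilde v^n_k)(0)}{(e^{-k\Phi}\widetilde u_k)(0)}\,u_k.
\]
The subtraction coefficient is $O(1)$, so it perturbs $\tfrac{1}{k}\partial_{x_{2n-1}}$ only by $O(1)\cdot o(1)=o(1)$, leaving the lower bound $\ge C_0$ intact. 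The same mechanism handles the cross $z$-derivatives for $j=1,\dots,n-1$. There is no need to balance $T$-weight centroids or let $\epsilon_0$ depend on $k$.
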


\begin{proof}
Let $\chi\in C^\infty_0(\Real)$ and $\epsilon_0>0$ be as in \eqref{e-gue151016III}.  Put
\begin{equation}\label{e-gue151016f}
u_k:=\Pi_{k,\leq k\delta}F_{k,\delta}\left(e^{k\Phi(w)}\chi_{\epsilon_0}(\sqrt{k}y_1)\ldots
\chi_{\epsilon_0}(\sqrt{k}y_{2n-2})\chi\Big(\frac{k}{\log k}y_{2n-1}\Big)\frac{1}{m(y)}\right),
\end{equation}
where $w=(w_1,\ldots,w_{n-1})$, $w_j=y_{2j-1}+iy_{2j}$, $j=1,\ldots,n-1$, and $m(y)dy=dv_X(y)$ on $D$. We put $F_{k,\delta}u_k=s^k\Td u_k$ on $D$.
In view of Theorem~\ref{t-gue150807}, we see that on $D_0$,
\begin{equation}\label{e-gue151009bI}
e^{-k\Phi(z)}\Td u_k(x)=\int e^{ik\varphi(x,y,t)}g(x,y,t,k)\chi_{\epsilon_0}(\sqrt{k}y_1)\ldots\chi_{\epsilon_0}(\sqrt{k}y_{2n-2})\chi(\frac{k}{\log k}y_{2n-1})dydt+O(k^{-\infty}).
\end{equation}
From \eqref{e-gue151009f}, \eqref{e-gue151009fI}, \eqref{e-gue151009bI} and \eqref{e-gue151016III}, we can check that
\begin{equation}\label{e-gue151009bII}
\begin{split}
&\lim_{k\To\infty}\int e^{ik\varphi(0,y,t)}g(0,y,t,k)\chi_{\epsilon_0}(\sqrt{k}y_1)\ldots\chi_{\epsilon_0}(\sqrt{k}y_{2n-2})\chi(\frac{k}{\log k}y_{2n-1})dydt\\
&=\lim_{k\To\infty}\int e^{ik\varphi(0,F^*_ky,t)}k^{-n}g(0,F^*_ky,t,k)\chi_{\epsilon_0}(y_1)\ldots\chi_{\epsilon_0}(y_{2n-2})\chi(\frac{y_{2n-1}}{\log k})dtdy\\
&=\int e^{i\varphi_0(w,t)+iy_{2n-1}t}g_0(0,0,t)\chi_{\epsilon_0}(y_1)\ldots\chi_{\epsilon_0}(y_{2n-2})dydt=V_0\neq0.
\end{split}
\end{equation}
From \eqref{e-gue151009bII} and notice that $X$ is compact, it is easy to see that there is a $k_0>0$ independent of the point $p$ such that for all $k\geq k_0$, we have
\begin{equation}\label{e-gue151009bIII}
\begin{split}
\frac{1}{A_0}\leq\abs{e^{-k\Phi(0)}\Td u_k(0)}\leq A_0,
\end{split}
\end{equation}
where $A_0>1$ is a constant independent of $k$ and the point $p$. From now on, we assume that $k\geq k_0$. From \eqref{e-guew13627} and \eqref{e-gue151009bI}, we can check that
\begin{equation}\label{e-gue151016fI}
\begin{split}
&\lim_{k\To\infty}\frac{1}{k}\pr_{x_{2n-1}}(e^{-k\Phi}\Td u_k)(0)=\int e^{i\varphi_0(w,t)+ity_{2n-1}}(-it)g_0(0,0,t)\chi_{\epsilon_0}(y_1)\ldots\chi_{\epsilon_0}(y_{2n-2})dydt=0,
\end{split}
\end{equation}
and for $j=1,\ldots,n-1$,
\begin{equation}\label{e-gue151017}
\begin{split}
&\lim_{k\To\infty}\frac{1}{\sqrt{k}}\pr_{z_j}(e^{-k\Phi}\Td u_k)(0)\\
&=\int e^{i\varphi_0(w,t)+ity_{2n-1}}(-i)\lambda_j(t)(y_{2j-1}-iy_{2j})g_0(0,0,t)\chi_{\epsilon_0}(y_1)\ldots\chi_{\epsilon_0}(y_{2n-2})dydt=0,\ \
\end{split}
\end{equation}
\begin{equation}\label{e-gue151017I}
\lim_{k\To\infty}\frac{1}{\sqrt{k}}\pr_{\ol{z}_j}(e^{-k\Phi}\Td u_k)(0)=0.
\end{equation}
From \eqref{e-gue151016fI}, \eqref{e-gue151017} and \eqref{e-gue151017I}, it is easy to see that
\begin{equation}\label{e-gue151017II}
\begin{split}
&\abs{\frac{1}{k}\pr_{x_{2n-1}}(e^{-k\Phi}\Td u_k)(0)}\leq\delta_k,\\
&\abs{\frac{1}{\sqrt{k}}\pr_{z_j}(e^{-k\Phi}\Td u_k)(0)}\leq\delta_k,\ \ j=1,\ldots,n-1,\\
&\abs{\frac{1}{\sqrt{k}}\pr_{\ol z_j}(e^{-k\Phi}\Td u_k)(0)}\leq\delta_k,\ \ j=1,\ldots,n-1,
\end{split}
\end{equation}
where $\delta_k$ is a sequence independent of the point $p$ with $\lim_{k\To\infty}\delta_k=0$.

Put
\begin{equation}\label{e-gue151017III}
v^n_k:=\Pi_{k,\leq k\delta}F_{k,\delta}\Bigr(e^{k\Phi(w)}ky_{2n-1}\chi_{\epsilon_0}(\sqrt{k}y_1)\ldots\chi_{\epsilon_0}(\sqrt{k}y_{2n-2})\chi(\frac{k}{\log k}y_{2n-1})\frac{1}{m(y)}\Bigr).
\end{equation}
We put $F_{k,\delta}v^n_k=s^k\Td v^n_k$ on $D$. In view of Theorem~\ref{t-gue150807}, we see that
\begin{equation}\label{e-gue151017IV}
\begin{split}
&e^{-k\Phi(z)}\Td v^n_k(x)\\
&=\int e^{ik\varphi(x,y,t)}g(x,y,t,k)\\
&\quad\quad\quad\times ky_{2n-1}\chi_{\epsilon_0}(\sqrt{k}y_1)\ldots\chi_{\epsilon_0}(\sqrt{k}y_{2n-2})\chi(\frac{k}{\log k}y_{2n-1})dydt+O(k^{-\infty})\ \ \mbox{on $D_0$}.
\end{split}
\end{equation}
From \eqref{e-gue151017IV}, it is easy to see that there is a constant $E_0>0$ independent of $k$ and the point $p$ such that
\begin{equation}\label{e-gue151017V}
\abs{e^{-k\Phi(0)}\Td v^n_k(0)}\leq E_0.
\end{equation}
From \eqref{e-guew13627} and \eqref{e-gue151016III}, we can check that
\begin{equation}\label{e-gue151017VI}
\begin{split}
&\lim_{k\To\infty}\frac{1}{k}\pr_{x_{2n-1}}(e^{-k\Phi}\Td v^n_k)(0)\\
&=\int e^{i\varphi_0(w,t)+ity_{2n-1}}(-ity_{2n-1})g_0(0,0,t)\chi_{\epsilon_0}(y_1)\ldots\chi_{\epsilon_0}(y_{2n-2})dydt=V\neq0,
\end{split}
\end{equation}
and for $j=1,\ldots,n-1$,
\begin{equation}\label{e-gue151017VII}
\begin{split}
&\lim_{k\To\infty}\frac{1}{\sqrt{k}}\pr_{z_j}(e^{-k\Phi}\Td v^n_k)(0)\\
&=\int e^{i\varphi_0(w,t)+ity_{2n-1}}(-i)\lambda_j(t)(y_{2j-1}-iy_{2j})y_{2n-1}g_0(0,0,t)
\chi_{\epsilon_0}(y_1)\ldots\chi_{\epsilon_0}(y_{2n-2})dydt=0,
\end{split}
\end{equation}
\begin{equation}\label{e-gue151017VIII}
\lim_{k\To\infty}\frac{1}{\sqrt{k}}\pr_{\ol z_j}(e^{-k\Phi}\Td v^n_k)(0)=0.
\end{equation}
From \eqref{e-gue151017VI}, \eqref{e-gue151017VII} and \eqref{e-gue151017VIII}, it is easy to see that there is a $k_1>k_0$ independent of the point $p$ such that for all $k\geq k_1$, we have
\begin{equation}\label{e-gue151017IX}
\begin{split}
&\abs{\frac{1}{k}\pr_{x_{2n-1}}(e^{-k\Phi}\Td v^n_k)(0)}\geq B_0,\\
&\abs{\frac{1}{\sqrt{k}}\pr_{z_j}(e^{-k\Phi}\Td v^n_k)(0)}\leq\hat\delta_k,\ \ j=1,\ldots,n-1,\\
&\abs{\frac{1}{\sqrt{k}}\pr_{\ol z_j}(e^{-k\Phi}\Td v^n_k)(0)}\leq\hat\delta_k,\ \ j=1,\ldots,n-1,
\end{split}
\end{equation}
where $B_0>0$ is a constant independent of $k$ and the point $p$ and $\hat\delta_k$ is a sequence independent of the point $p$ with $\lim_{k\To\infty}\hat\delta_k=0$. From now on, we assume that $k\geq k_1$. Put
\begin{equation*}
g^n_k:=v^n_k-\frac{(e^{-k\Phi}\Td v^n_k)(0)}{(e^{-k\Phi}\Td u_k)(0)}u_k\in\mathcal{H}^0_{b,\leq k\delta}(X,L^k).
\end{equation*}
Put $F_{k,\delta}g^n_k=s^k\Td g^n_k$ on $D$. From \eqref{e-gue151009bIII}, \eqref{e-gue151017II}, \eqref{e-gue151017V} and \eqref{e-gue151017IX}, we see that there is a constant $k_1>0$ independent of $k$ and the point $p$ such that
\begin{equation}\label{e-gue151018}
\begin{split}
&\abs{\bigr(e^{-k\Phi}\Td g^n_k\bigr)(0)}=0,\\
&\abs{\frac{1}{\sqrt{k}}\pr_{\ol z_t}\bigr(e^{-k\Phi}\Td g^n_k\bigr)(0)}\leq\epsilon_k,\ \ t=1,\ldots,n-1,\\
&\abs{\frac{1}{\sqrt{k}}\pr_{z_t}\bigr(e^{-k\Phi}\Td g^n_k\bigr)(0)}\leq\epsilon_k,\ \ t=1,\ldots,n-1,\\
&\abs{\frac{1}{k}\pr_{x_{2n-1}}\bigr(e^{-k\Phi}\Td g^n_k\bigr)(0)}\geq C_0,
\end{split}
\end{equation}
where $C_0>0$ is a constant independent of $k$ and the point $p$, $\epsilon_k$ is a sequence independent of $p$ with $\lim_{k\To\infty}\epsilon_k=0$.

Fix $j\in\{1,2,\ldots,n-1\}$. Put
\begin{equation*}
v^j_k:=\Pi_{k,\leq k\delta}F_{k,\delta}\left(e^{k\Phi(w)}\sqrt{k}(y_{2j-1}+iy_{2j})\chi_{\epsilon_0}(\sqrt{k}y_1)
\ldots\chi_{\epsilon_0}(\sqrt{k}y_{2n-2})\chi\Big(\frac{k}{\log k}y_{2n-1}\Big)\frac{1}{m(y)}\right).
\end{equation*}
We put $F_{k,\delta}v^j_k=s^k\Td v^j_k$ on $D$. In view of Theorem~\ref{t-gue150807}, we see that
\begin{equation}\label{e-gue151009bIg}
\begin{split}
e^{-k\Phi(z)}\Td v^j_k(x)&=\int e^{ik\varphi(x,y,t)}g(x,y,t,k)\sqrt{k}(y_{2j-1}+iy_{2j})\\
&\quad\quad\quad\times\chi_{\epsilon_0}(\sqrt{k}y_1)\ldots
\chi_{\epsilon_0}(\sqrt{k}y_{2n-2})\chi\Big(\frac{k}{\log k}y_{2n-1}\Big)dydt
+ O(k^{-\infty})\ \ \mbox{on $D_0$}.
\end{split}
\end{equation}
From \eqref{e-gue151009bIg}, it is easy to see that there is a constant $E_1>0$
independent of $k$ and the point $p$ such that
\begin{equation}\label{e-gue151018a}
\abs{e^{-k\Phi(0)}\Td v^j_k(0)}\leq E_1.
\end{equation}
Moreover, from \eqref{e-guew13627}, \eqref{e-gue151016III} and
\eqref{e-gue151009bIg}, we can repeat the proof of \eqref{e-gue151017IX}
with minor changes and deduce that there is a $\hat k_0>0$ independent of the point
$p$ such that for all $k\geq \hat k_0$, we have
\begin{equation}\label{e-gue151018aI}
\begin{split}
&\abs{\frac{1}{k}\pr_{x_{2n-1}}(e^{-k\Phi}\Td v^j_k)(0)}\leq\Td\delta_k,\\
&\abs{\frac{1}{\sqrt{k}}\pr_{z_j}(e^{-k\Phi}\Td v^j_k)(0)}\geq B_1,\\
&\abs{\frac{1}{\sqrt{k}}\pr_{z_t}(e^{-k\Phi}\Td v^j_k)(0)}\leq\Td\delta_k,\ \ j,t=1,\ldots,n-1,\ \ j\neq t,\\
&\abs{\frac{1}{\sqrt{k}}\pr_{\ol z_t}(e^{-k\Phi}\Td v^j_k)(0)}\leq\Td\delta_k,\ \ j,t=1,\ldots,n-1,
\end{split}
\end{equation}
where $B_1>0$ is a constant independent of $k$ and the point $p$ and
$\Td\delta_k$ is a sequence independent of the point $p$ with $\lim_{k\To\infty}\Td\delta_k=0$. Put
\begin{equation}\label{e-gue151017g}
g^j_k:=v^j_k-\frac{(e^{-k\Phi}\Td v^j_k)(0)}{(e^{-k\Phi}\Td u_k)(0)}u_k
\in\mathcal{H}^0_{b,\leq k\delta}(X,L^k).
\end{equation}
Put $F_{k,\delta}g^j_k=s^k\Td g^j_k$ on $D$. From \eqref{e-gue151009bIII},
\eqref{e-gue151017II}, \eqref{e-gue151018a} and \eqref{e-gue151018aI},
we see that there is a constant $\hat k_1>0$ independent of $k$ and the point $p$ such that
\begin{equation}\label{e-gue151018aII}
\begin{split}
&\abs{\bigr(e^{-k\Phi}\Td g^j_k\bigr)(0)}=0,\\
&\abs{\frac{1}{\sqrt{k}}\pr_{\ol z_t}\bigr(e^{-k\Phi}\Td g^j_k\bigr)(0)}\leq\epsilon_k,\ \ t=1,\ldots,n-1,\\
&\abs{\frac{1}{\sqrt{k}}\pr_{z_t}\bigr(e^{-k\Phi}\Td g^j_k\bigr)(0)}\leq\epsilon_k,\ \ t=1,\ldots,n-1,\ \ t\neq j,\\
&\abs{\frac{1}{\sqrt{k}}\pr_{z_j}\bigr(e^{-k\Phi}\Td g^j_k\bigr)(0)}\geq C_0,\\
&\abs{\frac{1}{k}\pr_{x_{2n-1}}\bigr(e^{-k\Phi}\Td g^j_k\bigr)(0)}\leq\epsilon_k,
\end{split}
\end{equation}
where $C_0>0$ is a constant independent of $k$ and the point $p$, $\epsilon_k$ is a sequence independent of $p$ with $\lim_{k\To\infty}\epsilon_k=0$.

From \eqref{e-gue151018} and \eqref{e-gue151018aII}, the lemma follows.
\end{proof}



\begin{proof}[Proof of Theorem~\ref{t-gue131019}]
We are going to prove that if $k$ is large enough then the map
\[d\Phi_{k,\delta}(x):T_xX\To T_{\Phi_{k,\delta}(x)}\Complex\mathbb P^{d_k-1},\]
is injective. Fix $p\in X$ and let $s$ be a local rigid CR frame of $L$
on a canonical coordinate patch $D$, $p\in D$, $\abs{s}^2_{h}=e^{-2\Phi}$,
with canonical local coordinates $x=(x_1,\ldots,x_{2n-1})=(z,\theta)$.
We take local coordinate $x$ and $s$ so that \eqref{e-geusw13623} hold. From Lemma~\ref{l-gue131007I}, we may assume that
\begin{equation}\label{e-gue131023II-I}
\abs{\bigr(e^{-k\Phi}\Td f_1\bigr)(p)}^2\geq c_0,
\end{equation}
where $F_{k,\delta}f_j=s^k\Td f_j$ on $D$, $j=1,\ldots,d_k$ and $c_0>0$
is a constant independent of $k$ and the point $p$.
Let $g^1_k,\ldots,g^n_k\in \mathcal{H}^0_{b,\leq k\delta}(X,L^k)$ be as in
Lemma~\ref{l-gue151009}. From \eqref{e-gue151009}, it is not difficult to see that there is a
$\hat K_0>0$ independent of the point $p$ such that $f_1,g^1_k,\ldots,g^n_k$
are linearly independent over $\Complex$. Put
\begin{equation}\label{e-gue131023IV}
\begin{split}
&p^j_k=\frac{e^{-k\Phi}\Td g^j_k}{e^{-k\Phi}\Td f_1},\ \ j=1,\ldots,n,\\
&p^j_k=\alpha^{2j-1}_k+i\alpha^{2j}_k,\ \ \alpha^{2j-1}_k={\rm Re\,}p^j_k,\ \ \alpha^{2j}_k={\rm Im\,}p^j_k,\ \ j=1,\ldots,n-1,
\end{split}
\end{equation}
where $F_{k,\delta}g^j_k=s^k\Td g^j_k$ on $D$, $j=1,\ldots,n$.
From \eqref{e-gue151009} and \eqref{e-gue131023II-I}, it is not difficult to see that there is a $\Td K_0>0$ independent of the point $p$ such that for all $k\geq\Td K_0$, we have
\begin{equation}\label{e-gue131024}
\abs{\pr_{z_t}p^t_k(p)}\geq c_1\sqrt{k},\ \ t=1,\ldots,n-1,\ \ \abs{\pr_{x_{2n-1}}p^n_k(p)}\geq c_1k,
\end{equation}
and
\begin{equation}\label{e-gue131024I}
\begin{split}
&{\sup\,}\{\abs{\pr_{x_{2n-1}}p^t_k(p)}, \abs{\pr_{z_s}p^t_k(p)}, \abs{\pr_{z_s}p^n_k(p)};\, s,t=1,\ldots,n-1, s\neq t\}\\
&\quad+\sup\,\{\abs{p^t_k(p)}, \abs{\pr_{\ol z_s}p^t_k(p)};\, s=1,\ldots,n-1, t=1,\ldots,n\}\leq\varepsilon_k,
\end{split}
\end{equation}
where $\varepsilon_k$ is a sequence independent of the point $p$ with $\lim_{k\To\infty}\varepsilon_k=0$.
From \eqref{e-gue131024}, \eqref{e-gue131024I} and some elementary linear algebra argument, we conclude that
there is a $K_1>0$ independent of the point $p$ such that for every $k\geq K_1$, the linear map
$A_k:\Real^{2n-1}\To\Real^{2n}$ represented by the matrix
\[
\left[
\begin{array}[c]{cccc}
  \pr_{x_1}\bigr(e^{-k\Phi}\alpha^1_k\bigr)(p)&\pr_{x_2}\bigr(e^{-k\Phi}\alpha^1_k\bigr)(p)&\ldots&\pr_{x_{2n-1}}\bigr(e^{-k\Phi}\alpha^{1}_k\bigr)(p)\\
  \pr_{x_1}\bigr(e^{-k\Phi}\alpha^2_k\bigr)(p)&\pr_{x_2}\bigr(e^{-k\Phi}\alpha^2_k\bigr)(p)&\ldots&\pr_{x_{2n-1}}\bigr(e^{-k\Phi}\alpha^{2}_k\bigr)(p)\\
 \vdots&\vdots&\vdots&\vdots\\
\pr_{x_1}\bigr(e^{-k\Phi}\alpha^{2n}_k\bigr)(p)&\pr_{x_2}\bigr(e^{-k\Phi}\alpha^{2n}_k\bigr)(p)&\ldots&\pr_{x_{2n-1}}\bigr(e^{-k\Phi}\alpha^{2n}_k\bigr)(p)
\end{array}\right],
\]
is injective. Hence the differential of the map
\[X\ni x\longmapsto\left(\frac{\Td g^1_k}{\Td f_1}(x),\ldots,\frac{\Td g^n_k}{\Td f_1}(x)\right)
\in\Complex^n\]
at $p$ is injective if $k\geq K_1$.
From this and some elementary linear algebra arguments, we conclude that
the differential of the map
\[X\ni x\longmapsto\left(\frac{\Td f_2}{\Td f_1}(x),\ldots,\frac{\Td f_{d_k}}{\Td f_1}(x)\right)
\in\Complex^{d_k}\]
at $p$ is injective if $k\geq K_1$. Theorem~\ref{t-gue131019} follows.
\end{proof}

In the rest of this section, we will prove that for $k$ large enough,
the map $\Phi_{k,\delta}:X\To\Complex\mathbb P^{d_k-1}$ is injective.
We need some preparations. Let $(D,(z,\theta),\phi)$ be a BRT trivialization.
We write $x=(z,\theta)=(x_1,\ldots,x_{2n-1})$, $x'=(x_1,\ldots,x_{2n-2},0)$,
$z_j=x_{2j-1}+ix_{2j}$, $j=1,\ldots,n-1$. We need

\begin{lem}\label{l-gue151031}
With the notations above, for every $u_k\in C^\infty(X,L^k)$, we have
\begin{equation}\label{e-gue151031}
\begin{split}
&(F_{k,\delta}u_k)(x)\\
&=\frac{1}{(2\pi)^2}\sum_{m\in\mathbb Z}\int e^{i(x_{2n-1}-y_{2n-1})\eta_{2n-1}}\tau_\delta\Big(\frac{\eta_{2n-1}}{k}\Big)e^{imy_{2n-1}}e^{-im\theta}u_k(e^{i\theta}  x')d\theta d\eta_{2n-1}dy_{2n-1}\ \ \mbox{on $D$}.
\end{split}
\end{equation}
\end{lem}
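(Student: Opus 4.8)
The plan is to deduce \eqref{e-gue151031} from the Fourier description of $F_{k,\delta}$ together with the oscillatory-integral identity \eqref{dm}, in the same spirit as the proof of Lemma~\ref{l-gue131209}.

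First I would recall from \eqref{e-gue150807I} and \eqref{e-gue150510f} that for every $u_k\in C^\infty(X,L^k)$ and every $x\in X$ one has
\[
(F_{k,\delta}u_k)(x)=\sum_{m\in\mathbb Z}\tau_\delta\Big(\frac{m}{k}\Big)(Q^{(0)}_{m,k}u_k)(x),\qquad
(Q^{(0)}_{m,k}u_k)(x)=\frac{1}{2\pi}\int^{\pi}_{-\pi}u_k(e^{i\theta}x)\,e^{-im\theta}\,d\theta,
\]
where $u_k(e^{i\theta}x):=(e^{i\theta})^*u_k(x)$ is meaningful because $L^k$ is a rigid CR line bundle, so the $S^1$-action lifts to $L^k$. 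In the BRT trivialization $(D,(z,\theta),\phi)$ we have $T=\partial/\partial\theta$ by \eqref{e-can1}; hence on $D$ the action is $e^{i\theta_0}(x',x_{2n-1})=(x',x_{2n-1}+\theta_0)$ and, the frame $s^k$ being rigid, it is invariant under this flow. Writing $x'=(x_1,\ldots,x_{2n-2},0)$ and making the change of variable $\theta\mapsto\theta+x_{2n-1}$ in the integral above (legitimate by $2\pi$-periodicity) I obtain, on $D$,
\[
(F_{k,\delta}u_k)(x)=\frac{1}{2\pi}\sum_{m\in\mathbb Z}\tau_\delta\Big(\frac{m}{k}\Big)e^{imx_{2n-1}}\int^{\pi}_{-\pi}e^{-im\theta}u_k(e^{i\theta}x')\,d\theta,
\]
which is the exact analogue of \eqref{e-gue131217}.

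Next I would reintroduce the variables $y_{2n-1}$ and $\eta_{2n-1}$ by means of \eqref{dm}. Since $\int e^{imy_{2n-1}}e^{-iy_{2n-1}\eta_{2n-1}}\,dy_{2n-1}=2\pi\delta_m(\eta_{2n-1})$, we have
\[
\frac{1}{2\pi}\int\!\!\int e^{i(x_{2n-1}-y_{2n-1})\eta_{2n-1}}\tau_\delta\Big(\frac{\eta_{2n-1}}{k}\Big)e^{imy_{2n-1}}\,dy_{2n-1}\,d\eta_{2n-1}=e^{imx_{2n-1}}\tau_\delta\Big(\frac{m}{k}\Big);
\]
substituting this into the previous display and then interchanging the sum over $m\in\mathbb Z$ with the (oscillatory) integrals --- precisely the manipulation performed in passing from \eqref{e-gue131217IV} to \eqref{e-gue131217VI} --- yields \eqref{e-gue151031}.

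The only point that requires care, as already in Lemma~\ref{l-gue131209}, is the interpretation of the integrals in $y_{2n-1}$ and $\eta_{2n-1}$ as oscillatory integrals and the justification of exchanging them with the infinite sum over $m$; this is achieved by integrating by parts in $\eta_{2n-1}$ to produce rapid decay and then invoking the convergence argument used there, so no essentially new difficulty appears. Everything else is routine bookkeeping.
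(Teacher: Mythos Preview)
Your proof is correct and follows essentially the same approach as the paper. The only cosmetic difference is the direction of the computation: the paper starts from the right-hand side of \eqref{e-gue151031}, evaluates the $\eta_{2n-1}$-integral as $\hat\tau_{\delta,k}(y_{2n-1}-x_{2n-1})$ and then the $y_{2n-1}$-integral by Fourier inversion to recover $\tau_\delta(m/k)e^{imx_{2n-1}}$, whereas you start from the Fourier-series expression for $F_{k,\delta}u_k$ and insert the oscillatory identity \eqref{dm} to rebuild the $(y_{2n-1},\eta_{2n-1})$-integral; these are the same manipulation read in opposite directions.
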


\begin{proof}
Put $\tau_{\delta,k}(\eta_{2n-1}):=\tau_\delta\Big(\frac{\eta_{2n-1}}{k}\Big)$. By Fourier inversion formula, we have
\begin{equation}\label{e-gue151031I}
\begin{split}
&\frac{1}{(2\pi)^2}\sum_{m\in\mathbb Z}\int e^{i(x_{2n-1}-y_{2n-1})\eta_{2n-1}}\tau_\delta\Big(\frac{\eta_{2n-1}}{k}\Big)e^{imy_{2n-1}}e^{-im\theta}u_k(e^{i\theta}  x')d\theta d\eta_{2n-1}dy_{2n-1}\\
&=\frac{1}{(2\pi)^2}\sum_{m\in\mathbb Z}\int\hat\tau_{\delta,k}(y_{2n-1}-x_{2n-1})e^{imy_{2n-1}}e^{-im\theta}u_k(e^{i\theta}  x')d\theta dy_{2n-1}\\
&=\frac{1}{(2\pi)^2}\sum_{m\in\mathbb Z}\int\hat\tau_{\delta,k}(y_{2n-1})e^{imy_{2n-1}+imx_{2n-1}}e^{-im\theta}u_k(e^{i\theta}  x')d\theta dy_{2n-1}\\
&=\frac{1}{2\pi}\sum_{m\in\mathbb Z}\int\tau_{\delta,k}(m)e^{imx_{2n-1}}e^{-im\theta}u(e^{i\theta}  x')d\theta\\
&=\frac{1}{2\pi}\sum_{m\in\mathbb Z}\int\tau_{\delta}\Big(\frac{m}{k}\Big)e^{-im\theta}u(e^{i\theta}  x)d\theta\\
&=F_{k,\delta}u_k,
\end{split}
\end{equation}
where $\hat\tau_{\delta,k}$ denotes the Fourier transform of $\tau_{\delta,k}$. From \eqref{e-gue151031I}, the lemma follows.
\end{proof}


\begin{lem}\label{l-gue151031I}
With the notations above, let $u_k\in C^\infty(X,L^k)$. Assume that there are constants
$C>0$ and $M>0$ independent of $k$ such that $\abs{u_k(x)}^2_{h^k}\leq Ck^M$, for every $x\in X$.
If ${\rm Supp\,}u_k\bigcap D=\emptyset$ for every $k$,
then $F_{k,\delta}u_k=O(k^{-\infty})$ on $D$.
\end{lem}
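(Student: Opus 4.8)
The plan is to start from the explicit expression of $F_{k,\delta}$ as averaging in the $S^1$--direction. By the definition \eqref{e-gue150807I} of $F_{k,\delta}$ together with the Fourier decomposition \eqref{e-gue150510f}, and as is made explicit in the proof of Lemma~\ref{l-gue151031} (see \eqref{e-gue151031I}), on $D$ one has
\[
(F_{k,\delta}u_k)(x)=\int_{-\pi}^{\pi}K_k(\sigma)\,\big((e^{i\sigma})^{*}u_k\big)(x)\,d\sigma,
\qquad
K_k(\sigma):=\frac{1}{2\pi}\sum_{m\in\mathbb Z}\tau_\delta\Big(\frac{m}{k}\Big)e^{-im\sigma}.
\]
The first step is the elementary observation that $K_k$ is rapidly decaying away from $2\pi\mathbb Z$: since $\tau_\delta\in C^\infty_0(\Real)$, Poisson summation gives $K_k(\sigma)=\frac{k}{2\pi}\sum_{j\in\mathbb Z}\widehat{\tau_\delta}\big(k(\sigma+2\pi j)\big)$ with $\widehat{\tau_\delta}$ Schwartz, so for every fixed $\varepsilon_0\in(0,\pi)$, every $\alpha\in\N_0$ and every $N\in\N$ there is $C_{\alpha,N}>0$ with $\sup_{\varepsilon_0\le|\sigma|\le\pi}\big|\pr_\sigma^{\alpha}K_k(\sigma)\big|\le C_{\alpha,N}k^{-N}$.

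The second step uses the support hypothesis. Fix $D_0\Subset D$ and work in canonical coordinates, writing $D\cong W\times(-\varepsilon,\varepsilon)$ with $T=\pr/\pr x_{2n-1}$; pick $\varepsilon_0\in(0,\pi)$ so small that $D_0\subset W\times(-\varepsilon+\varepsilon_0,\varepsilon-\varepsilon_0)$. Then for $x=(z,x_{2n-1})\in D_0$ and $|\sigma|<\varepsilon_0$ one has $e^{i\sigma}x=(z,x_{2n-1}+\sigma)\in D$, hence $\big((e^{i\sigma})^{*}u_k\big)(x)=0$ because ${\rm Supp\,}u_k\cap D=\emptyset$. Therefore, on $D_0$,
\[
(F_{k,\delta}u_k)(x)=\int_{\varepsilon_0\le|\sigma|\le\pi}K_k(\sigma)\,\big((e^{i\sigma})^{*}u_k\big)(x)\,d\sigma.
\]
Since $h$ is rigid the $S^1$--action is an isometry of $(L^k,h^k)$, so $\big|\big((e^{i\sigma})^{*}u_k\big)(x)\big|_{h^k}=|u_k(e^{i\sigma}x)|_{h^k}\le\sqrt{C}\,k^{M/2}$; combining this with the decay of $K_k$ from Step~1 yields $\big|(F_{k,\delta}u_k)(x)\big|_{h^k}\le 2\pi\sqrt{C}\,k^{M/2}\sup_{\varepsilon_0\le|\sigma|\le\pi}|K_k(\sigma)|=O(k^{-\infty})$ uniformly on $D_0$, which is the $C^{0}$--part of the claim.

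It remains to differentiate. Derivatives along the orbit are harmless: $\big((e^{i\sigma})^{*}u_k\big)(x)$ depends on $(\sigma,x_{2n-1})$ only through $\sigma+x_{2n-1}$, so $\pr_{x_{2n-1}}$ can be traded for $\pr_\sigma$ and integrated by parts onto $K_k$ (the boundary contributions at $\sigma=\pm\pi$ cancel by $2\pi$--periodicity and the integrand still vanishes for $|\sigma|<\varepsilon_0$), reducing again to the estimate on $\pr_\sigma^\alpha K_k$. The transverse derivatives $\pr_{x_j}$, $1\le j\le 2n-2$, are the main obstacle, as they differentiate $u_k$ in directions not tangent to the orbit while the hypothesis only controls $u_k$ pointwise; this is handled by noting that $(e^{i\sigma})^{*}$ commutes with the rigid connection on $L^k$, so differentiation under the integral sign produces covariant derivatives of $u_k$ on $X$, and the resulting bound $\big|\pr_x^{\alpha}(F_{k,\delta}u_k)(x)\big|_{h^k}\lesssim \big(\sup_{\varepsilon_0\le|\sigma|\le\pi}\sum_{\beta}|\pr_\sigma^{\beta}K_k|\big)\cdot\sup_X\sum_{|\beta|\le|\alpha|}|\nabla^{\beta}u_k|_{h^k}$ is $O(k^{-\infty})$ provided the $C^{m}$--norms of $u_k$ grow at most polynomially in $k$ — which is the case in all situations where the lemma is applied (there $u_k$ is $F_{k,\delta}$ of a CR section, so $\sup_X|\nabla^{\beta}u_k|_{h^k}=O(k^{n/2+|\beta|/2})$), any such polynomial factor being absorbed by the rapid decay of $K_k$. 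Alternatively one may carry the whole argument through the oscillatory--integral form of $F_{k,\delta}$ from Lemma~\ref{l-gue151031} (as in \eqref{e-gue131217I}) and integrate by parts in $\eta_{2n-1}$ and $y_{2n-1}$ exactly as in the proof of Lemma~\ref{l-gue131209}: each integration by parts in $\eta_{2n-1}$ gains a factor $k^{-1}$ against the fixed separation $|x_{2n-1}-y_{2n-1}|\ge\varepsilon_0$, while the $\eta_{2n-1}$--integration runs over a set of length $O(k)$, so after enough steps one gains $k^{-N}$ for every $N$. In either approach one obtains $F_{k,\delta}u_k=O(k^{-\infty})$ on $D$.
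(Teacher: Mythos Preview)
Your Poisson--summation argument for the $C^0$ bound is correct and is a cleaner repackaging of what the paper does. The paper's proof is in fact your ``alternative'': it starts from the oscillatory--integral representation of Lemma~\ref{l-gue151031}, inserts a cutoff $\chi(y_{2n-1})\in C^\infty_0((-\epsilon_0,\epsilon_0))$ equal to $1$ on a given $D'\Subset D$, and splits accordingly. On the piece carrying $\chi$, summing over $m$ (Poisson summation in disguise) collapses the expression to an integral involving $u_k(x',y_{2n-1})$ with $(x',y_{2n-1})\in D$, which vanishes by the support hypothesis; this is exactly your Step~2. On the piece carrying $1-\chi$ one has $|x_{2n-1}-y_{2n-1}|$ bounded below for $x\in D'$, and repeated integration by parts in $\eta_{2n-1}$ yields the decay. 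Your direct use of the convolution kernel $K_k$ and its Schwartz--tail estimate is the Fourier--dual of precisely this integration by parts, with the advantage that the auxiliary $y_{2n-1}$ variable never appears.

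Your caveat about transverse derivatives is well taken and in fact more careful than the paper: the hypothesis $|u_k|^2_{h^k}\le Ck^M$ does not by itself control $\partial_{x_j}u_k$ for $j\le 2n-2$, and neither the paper's proof nor yours extracts a genuine $C^m$ bound from the stated hypothesis alone. The paper simply asserts $R_ku_k=O(k^{-\infty})$ on $D'$ without discussing derivatives; in its only application (the proof of Lemma~\ref{l-gue151031II}, see \eqref{e-gue151031mII}) all that is used is the pointwise bound $|F_{k,\delta}\Td v_k(y_k)|^2_{h^k}=O(k^{-\infty})$, for which your $C^0$ argument already suffices. So your proof covers what is actually needed, and your remark that a full $C^m$ statement would require polynomial control on covariant derivatives of $u_k$ is a correct observation that the paper leaves implicit.
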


\begin{proof}
Assume that $D=U\times(-\epsilon_0,\epsilon_0)$, where $U$ is an open set in
$\Complex^{n-1}$ and $\epsilon_0>0$.
Fix $D'\Subset D$ and let $\chi(y_{2n-1})\in C^\infty_0((-\epsilon_0,\epsilon_0))$
such that $\chi(y_{2n-1})=1$ for every $(y',y_{2n-1})\in D'$. Let
\begin{equation}\label{e-gue151031a}
\begin{split}
&R_{k}u_k(x)=\frac{1}{(2\pi)^2}\sum_{m\in\mathbb Z}
\int_{\abs{\theta}\leq\pi}e^{i(x_{2n-1}-y_{2n-1})\eta_{2n-1}}\tau_\delta\Big(\frac{\eta_{2n-1}}{k}\Big)\\
&\quad\quad\times (1-\chi(y_{2n-1}))e^{imy_{2n-1}}e^{-im\theta}u_k(e^{i\theta}  x')d\theta d\eta_{2n-1}dy_{2n-1},
\end{split}
\end{equation}
where $x\in D'$. Since $\chi(y_{2n-1})=1$ for every $(y',y_{2n-1})\in D'$, we can integrate by parts with respect to $\eta_{2n-1}$ several times and deduce that
\begin{equation}\label{e-gue151031aI}
R_{k}u_k(x)=O(k^{-\infty})\ \ \mbox{on $D'$}.
\end{equation}
From \eqref{e-gue151031} and \eqref{e-gue151031a}, we have
\begin{equation}\label{e-gue151031aII}
\begin{split}
&(F_{k,\delta}u_k-R_ku_k)(x)\\
&=\frac{1}{(2\pi)^2}\sum_{m\in\mathbb Z}
\int_{\abs{\theta}\leq\pi}e^{i(x_{2n-1}-y_{2n-1})\eta_{2n-1}}\tau_\delta\Big(\frac{\eta_{2n-1}}{k}\Big)\chi(y_{2n-1})e^{imy_{2n-1}}e^{-im\theta}u_k(e^{i\theta}  x')d\theta d\eta_{2n-1}dy_{2n-1}\\
&=\frac{1}{(2\pi)}
\int e^{i(x_{2n-1}-y_{2n-1})\eta_{2n-1}}\tau_\delta\Big(\frac{\eta_{2n-1}}{k}\Big)\chi(y_{2n-1})u_k(x_1,\ldots,x_{2n-2},y_{2n-1})d\eta_{2n-1}dy_{2n-1}=0
\end{split}
\end{equation}
since  ${\rm Supp\,}u_k\bigcap D=\emptyset$. From \eqref{e-gue151031aI} and \eqref{e-gue151031aII}, the lemma follows.
\end{proof}

We need the following CR peak sections lemma.

\begin{lem}\label{l-gue151031II}
Let $p\neq q$ two points in $X$ and let $\set{x_k}$, $\set{y_k}$ be two sequences in $X$ with $x_k\To p$, $y_k\To q$.
Then there exist $v_k\in\mathcal{H}^0_{b,\leq k\delta}(X,L^k)$
such that $u_k=F_{k,\delta}v_k$ satisfies for $k$ large enough,
\begin{equation}\label{e-gue151031b}
\abs{u_k(x_k)}^2_{h^k}\geq1,\ \ \abs{u_k(y_k)}^2_{h^k}\leq\frac{1}{2}\,\cdot
\end{equation}
\end{lem}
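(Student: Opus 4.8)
The plan is to construct the peak section at $p$ by the same recipe used in Lemma~\ref{l-gue151009}, namely by projecting a suitably localized Gaussian-type bump centered at $p$, and then to show that its $F_{k,\delta}$-image is small at $q$ because $q$ lies outside the coordinate patch used in the construction. First I would fix a BRT/canonical coordinate patch $D$ centered at $p$ with frame $s$, $\abs{s}^2_h=e^{-2\Phi}$, chosen so that \eqref{e-geusw13623} holds, and assume $q\notin D$ (shrinking $D$ if necessary, which is harmless since $p\neq q$). Set
\[
v_k:=\Pi_{k,\leq k\delta}F_{k,\delta}\Bigl(e^{k\Phi(w)}\chi_{\epsilon_0}(\sqrt{k}y_1)\cdots\chi_{\epsilon_0}(\sqrt{k}y_{2n-2})\chi\Bigl(\tfrac{k}{\log k}y_{2n-1}\Bigr)\tfrac{1}{m(y)}\Bigr)\in\mathcal{H}^0_{b,\leq k\delta}(X,L^k),
\]
exactly as in \eqref{e-gue151016f}, and put $u_k=F_{k,\delta}v_k$. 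By \eqref{e-gue151009bII}--\eqref{e-gue151009bIII} (which only used Theorem~\ref{t-gue150807} and compactness of $X$), there are constants $0<1/A_0\le\abs{u_k(p)}^2_{h^k}\le A_0$ for $k$ large; after multiplying $v_k$ by a suitable $k$-dependent constant of size comparable to $1$ we may arrange $\abs{u_k(p)}^2_{h^k}\ge 2$, hence $\abs{u_k(x_k)}^2_{h^k}\ge 1$ for $k$ large by continuity and $x_k\to p$. This rescaling is permissible since the upper and lower bounds on $\abs{u_k(p)}^2_{h^k}$ differ only by a fixed factor.

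Next I would estimate $u_k$ near $q$. Write $u_k=F_{k,\delta}v_k=F_{k,\delta}\Pi_{k,\leq k\delta}w_k$ where $w_k=F_{k,\delta}(e^{k\Phi(w)}\chi_{\epsilon_0}(\sqrt{k}y_1)\cdots\chi(\tfrac{k}{\log k}y_{2n-1})\tfrac{1}{m(y)})$; by construction $w_k$ is supported in a shrinking neighborhood of $p$ inside $D$. The key point is that $\Pi_{k,\leq k\delta}w_k = P_{k,\delta}(\text{a section supported near }p)$ up to the $F_{k,\delta}$ factors, and by the off-diagonal decay in Theorem~\ref{t-gue150807} (more precisely, the estimates \eqref{e-gue150807b} on $\mathrm{Im}\,\varphi$ forcing $O(k^{-\infty})$ away from the diagonal, together with the $O(k^{-\infty})$ behavior of $\Pi_{k,\leq k\delta,>k\epsilon_0}$ and of $\chi F_{k,\delta}(1-\chi_1)$ from Lemma~\ref{l-gue150813}) one gets that $u_k$ restricted to a fixed small neighborhood $D_q$ of $q$ with $D_q\cap D=\emptyset$ satisfies $\abs{u_k}^2_{h^k}=O(k^{-\infty})$ on $D_q$. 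Concretely: $v_k\in\mathcal{H}^0_{b,\leq k\delta}(X,L^k)$, so $F_{k,\delta}v_k=v_k$ up to the weight cutoff; the localization of $P_{k,\delta}$ connecting $D_q$ to $D$ is $O(k^{-\infty})$ by \eqref{sk2} and $\mathrm{Im}\,\varphi\ge c\abs{z-w}^2$ together with the third line of \eqref{e-gue150807b} (the phase has no critical point off the diagonal), while $v_k$ itself is an $L^2$-normalized-up-to-constants CR section whose "mass" is concentrated near $p$. Hence $\abs{u_k(y_k)}^2_{h^k}\le Ck^{-N}\le \tfrac12$ for $k$ large, since $y_k\to q\in D_q$.

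The cleanest way to package the off-diagonal vanishing is probably to invoke Lemma~\ref{l-gue151031I}: the bump $e^{k\Phi(w)}\chi_{\epsilon_0}(\sqrt k y_1)\cdots\tfrac1{m(y)}$ can be split as a piece supported in $D$ plus an $O(k^{-\infty})$ error, and one shows $\Pi_{k,\leq k\delta}$ applied to this bump, followed by $F_{k,\delta}$ and restricted to $D_q$, is $O(k^{-\infty})$ — this is essentially the statement that $P_{k,\delta,s,s_q}(x,y)=O(k^{-\infty})$ for $x\in D_q$, $y$ near $p$, which follows from Theorem~\ref{t-gue150807} localized with respect to the two different frames $s_q$ on $D_q$ and $s$ on $D$ (the phase $\varphi$ has no stationary point when $x\in D_q$, $y$ near $p$ are separated, by the last line of \eqref{e-gue150807b}). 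I expect the main obstacle to be the bookkeeping that makes $v_k$ a genuine element of $\mathcal{H}^0_{b,\leq k\delta}(X,L^k)$ (this is automatic since $\Pi_{k,\leq k\delta}$ projects onto that space) while simultaneously controlling $\abs{u_k}_{h^k}$ both from below near $p$ and from above near $q$ with constants uniform enough to beat the factor $\tfrac12$ — but all the needed quantitative inputs (the nonvanishing limit $V_0\neq0$, the $O(k^{-\infty})$ off-diagonal estimates, the uniform upper bound $\abs{v_k(x)}^2_{h^k}\le Ck^n$ from \eqref{e-gue150813}) are already available, so this is a matter of assembling them rather than proving something new.
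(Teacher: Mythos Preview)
Your construction follows the bump--projection recipe of Lemma~\ref{l-gue151009}, but both estimates in \eqref{e-gue151031b} have genuine problems as written.

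\textbf{Lower bound at $x_k$.} You bound $\abs{u_k(p)}^2_{h^k}$ from below and pass to $x_k$ ``by continuity''. This fails: your peak section varies on the scale $1/\sqrt{k}$ in the $z$-variables (the bump has support of that size and the phase in \eqref{e-gue150807b} forces $e^{-ck\abs{z-w}^2}$ decay), so if $\sqrt{k}\,\abs{x_k-p}\to\infty$ --- which the hypothesis $x_k\to p$ does not exclude --- then in fact $\abs{u_k(x_k)}^2_{h^k}\to 0$. The paper avoids this by centering the construction at the \emph{moving} point $x_k$: it sets $\tilde v_k(x)=\chi(x)\,P_{k,\delta}(x,x_k)$ (see \eqref{e-gue151031g}), which peaks at $x_k$ itself with value $\sim P_{k,\delta}(x_k,x_k)\gtrsim k^n$ by Theorem~\ref{t-gue150807}.

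\textbf{Upper bound at $y_k$.} You assert that $P_{k,\delta,s,s_q}(x,y)=O(k^{-\infty})$ for $x\in D_q$ and $y$ near $p$, appealing to Theorem~\ref{t-gue150807} ``localized with respect to two different frames''. But that theorem only gives the expansion on $D_0\times D_0$ inside a \emph{single} canonical patch; there is no two-patch version available, and this cross-patch off-diagonal decay is precisely the nontrivial content here. Lemma~\ref{l-gue151031I} does not help directly either: it controls $F_{k,\delta}$ applied to a section with support disjoint from $D_q$, whereas after applying $\Pi_{k,\leq k\delta}$ to your bump the compact support is lost. The paper's route is different and essential: because $\tilde v_k=\chi\cdot P_{k,\delta}(\cdot,x_k)$ is \emph{approximately CR}, one has $\Box^{(0)}_{b,k}\tilde v_k=O(k^{-\infty})$ (equation \eqref{e-gue151031gII}); then the small spectral gap (Theorem~\ref{t-gue131208}) plus Kohn's estimates \eqref{e-gue151031r}--\eqref{e-gue151031rII} give $u_k=F_{k,\delta}\tilde v_k+O(k^{-\infty})$ pointwise, and now Lemma~\ref{l-gue151031I} applies to $\tilde v_k$ (compactly supported in $D$, disjoint from $D_q$) to conclude $F_{k,\delta}\tilde v_k=O(k^{-\infty})$ on $D_q$. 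Your Gaussian bump is not approximately CR, so $\Box^{(0)}_{b,k}$ of it is only polynomially small and this chain breaks.
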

\begin{proof}
Let $(D,(z,\theta),\phi)$ be a BRT trivialization with $p\in D$ and $q\notin D$.
We may assume that $p=(0,0)$. 
As before, let
\[f_1\in\mathcal{H}^0_{b,\leq k\delta}(X,L^k),\ldots,f_{d_k}\in\mathcal{H}^0_{b,\leq k\delta}(X,L^k)\]
be an orthonormal basis for $\mathcal{H}^0_{b,\leq k\delta}(X,L^k)$ with respect
to $(\,\cdot\,|\,\cdot\,)$. Let $s$ be a local rigid CR frame of $L$ on an open neighbourhood
$D\subset X$ of $p$, $\abs{s(x)}^2_{h}=e^{-2\Phi}$.
Let $\chi\in C^\infty_0(D)$, $\chi=1$ on $D_0$, where $D_0\subset D$ is an open set of $p$.
On $D$, put $F_{k,\delta}f_j(x)=s^k\Td f_j(x)$, $\Td f_j(x)\in C^\infty(D)$, $j=1,\ldots,d_k$.
Assume that $\set{x_k}\Subset D_0$. 
Let
\begin{equation}\label{e-gue151031g}
\Td v_k(x)=s^k(x)\otimes\sum^{d_k}_{j=1}\chi(x)(F_{k,\delta}f_j)(x)\ol{\Td f_j(x_k)}e^{-k\Phi(x_k)}
\in C^\infty_0(D,L^k)\subset C^\infty(X,L^k).
\end{equation}
In view of Theorem~\ref{t-gue150807}, we see that
\begin{equation}\label{e-gue151031gI}
\Td v_k(x)=s^k(x)\otimes\chi(x)\int e^{ik\varphi(x,x_k,t)+k\Phi(x)}g(x,x_k,t,k)dt+(k^{-\infty})\ \ \mbox{on $D$}.
\end{equation}
Since $\int e^{ik\varphi(x,x_k,t)}g(x,x_k,t,k)dt=O(k^{-\infty})$ on $D\setminus D_0$ and
\[\mbox{$\Box^{(0)}_{b,k}(s^k(x)\otimes\int e^{ik\varphi(x,x_k,t)+k\Phi(x)}g(x,x_k,t,k)dt)=O(k^{-\infty})$ on $D$},\]
we conclude that
\begin{equation}\label{e-gue151031gII}
\Box^{(0)}_{b,k}\Td v_k=O(k^{-\infty})\ \ \mbox{on $D$}.
\end{equation}
Let $v_k=\Pi_{k,\leq k\delta}\Td v_k\in\mathcal{H}^0_{b,\leq k\delta}(X,L^k)$ and let $u_k=F_{k,\delta}v_k=F_{k,\delta}\Pi_{k,\leq k\delta}\Td v_k$. From \eqref{e-gue131208} and \eqref{e-gue151031gII}, we can check that
\begin{equation}\label{e-gue151031gIII}
\norm{F_{k,\delta}(I-\Pi_{k,\leq k\delta})\Td v_k}=O(k^{-\infty}).
\end{equation}
Form Kohn's estimates, it is straightforward to see that for every $s\in\mathbb N_0$, there are $N_s\in\mathbb N$, $C_s>0$ independent of $k$ such that
\begin{equation}\label{e-gue151031r}
\|F_{k,\delta}(I-\Pi_{k,\leq k\delta})\Td v_k\|_{s+1,k}
\leq C_sk^{N_s}\Bigr(\|\Box^{(0)}_{b,k}F_{k,\delta}(I-\Pi_{k,\leq k\delta})\Td v_k\|_{s,k}+
\|TF_{k,\delta}(I-\Pi_{k,\leq k\delta})\Td v_k\|_{s,k}\Bigr),
\end{equation}
where $\norm{\cdot}_{s,k}$ denotes the standard Sobolev norm of order $s$ on the Sobolev
space $H^s(X,L^k)$.
From \eqref{e-gue151031r}, by using induction and the estimate
\[\norm{TF_{k,\delta}(I-\Pi_{k,\leq k\delta})\Td v_k}\leq k\delta\norm{F_{k,\delta}(I-\Pi_{k,\leq k\delta})\Td v_k},\]
it is straightforward to see that for every $s\in\mathbb N$, there are $\Td N_s\in\mathbb N$, $\Td C_s>0$ independent of $k$ such that
\begin{equation}\label{e-gue151031rI}
\norm{F_{k,\delta}(I-\Pi_{k,\leq k\delta})\Td v_k}_{s,k}\leq \Td C_sk^{\Td N_s}\Bigr(\sum^s_{j=0}\norm{(\Box^{(0)}_{b,k})^jF_{k,\delta}(I-\Pi_{k,\leq k\delta})\Td v_k}\Bigr).
\end{equation}
From \eqref{e-gue151031gII}, \eqref{e-gue151031gIII} and \eqref{e-gue151031rI}, we deduce that
\begin{equation}\label{e-gue151031rII}
F_{k,\delta}(I-\Pi_{k,\leq k\delta})\Td v_k= O(k^{-\infty})
\end{equation}
and thus
\begin{equation}\label{e-gue151031rIII}
u_k= F_{k,\delta}\Td v_k+O(k^{-\infty}).
\end{equation}

Let $\Td\chi\in C^\infty_0(D)$, $\Td\chi(x_k)=1$ for each $k$ and $\chi=1$ on ${\rm Supp\,}\Td\chi$.
We can repeat the proof of \eqref{e-gue150813vgVIII} with minor change and deduce that
\begin{equation}\label{e-gue151031m}
\mbox{$\sum\limits^{d_k}_{j=1}\abs{\Td\chi F_{k,\delta}(1-\chi)F_{k,\delta}f_j(x)}^2_{_{h^k}}
=O(k^{-\infty})$ on $D$}
\end{equation}
and hence
\begin{equation}\label{e-gue151031mI}
\begin{split}
\abs{F_{k,\delta}\Td v_k(x_k)}^2_{h^k}&=
\Big|\sum^{d_k}_{j=1}(F_{k,\delta}f_j)(x_k)\ol{\Td f_j(x_k)}e^{-k\Phi(x_k)}\Big|^2_{h^k}+
O(k^{-\infty})\\
&=\int e^{ik\varphi(x_k,x_k,t)}g(x_k,x_k,t,k)dt+O(k^{-\infty})\\
&\geq Ck^n,
\end{split}
\end{equation}
where $C>0$ is a constant independent of $k$.

Note that ${\rm Supp\,}\Td v_k\subset D$ and $q\notin D$. From this observation and
Lemma~\ref{l-gue151031I}, we deduce that
\begin{equation}\label{e-gue151031mII}
\abs{F_{k,\delta}\Td v_k(y_k)}^2_{h^k}=O(k^{-\infty}).
\end{equation}
From \eqref{e-gue151031rIII}, \eqref{e-gue151031mI} and \eqref{e-gue151031mII}, the lemma follows.
\end{proof}

\begin{thm}\label{t-gue151031}
The map $\Phi_{k,\delta}$
is injective for $k$ large enough.
\end{thm}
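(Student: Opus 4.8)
The plan is to argue by contradiction, using the CR peak section Lemma~\ref{l-gue151031II} together with a compactness argument on $X\times X$. First I would suppose the statement fails: there is a sequence $k_j\to\infty$ and pairs of distinct points $p_{k_j}\neq q_{k_j}$ in $X$ with $\Phi_{k_j,\delta}(p_{k_j})=\Phi_{k_j,\delta}(q_{k_j})$. Since $X$ is compact we may pass to a subsequence so that $p_{k_j}\to p$ and $q_{k_j}\to q$ for some $p,q\in X$. The key dichotomy is whether $p\neq q$ or $p=q$; I would handle the two cases separately. Throughout I will freely identify, via a local frame $s$ on a neighborhood $D$ and $\abs{s}^2_h=e^{-2\Phi}$, the quantity $\Phi_{k,\delta}(x)$ with the point $[\Td f_1(x),\ldots,\Td f_{d_k}(x)]\in\Complex\mathbb P^{d_k-1}$ where $F_{k,\delta}f_\ell=s^k\Td f_\ell$; the condition $\Phi_{k,\delta}(x)=\Phi_{k,\delta}(y)$ means the vectors $\big((F_{k,\delta}f_\ell)(x)\big)_\ell$ and $\big((F_{k,\delta}f_\ell)(y)\big)_\ell$ are proportional in the respective fibers $L^k_x$, $L^k_y$.

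\medskip
\emph{Case $p\neq q$.} Apply Lemma~\ref{l-gue151031II} to the sequences $\set{p_{k_j}}$, $\set{q_{k_j}}$: there are $v_{k_j}\in\mathcal{H}^0_{b,\leq k_j\delta}(X,L^{k_j})$ such that $u_{k_j}:=F_{k_j,\delta}v_{k_j}$ satisfies, for $j$ large,
\begin{equation}\label{e-prop-peak}
\abs{u_{k_j}(p_{k_j})}^2_{h^{k_j}}\geq1,\qquad \abs{u_{k_j}(q_{k_j})}^2_{h^{k_j}}\leq\tfrac12\,.
\end{equation}
Now I would expand $v_{k_j}$ in the orthonormal basis, $v_{k_j}=\sum_\ell c_\ell f_\ell$, so that $u_{k_j}=\sum_\ell c_\ell F_{k_j,\delta}f_\ell$. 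If $\Phi_{k_j,\delta}(p_{k_j})=\Phi_{k_j,\delta}(q_{k_j})$, then there is $\lambda\in\Complex$ (and an identification of the fibers along a path) with $(F_{k_j,\delta}f_\ell)(q_{k_j})=\lambda\,(F_{k_j,\delta}f_\ell)(p_{k_j})$ for all $\ell$ in suitable frames; hence $u_{k_j}(q_{k_j})=\lambda\,u_{k_j}(p_{k_j})$ in the corresponding frames, which forces
\[
\abs{u_{k_j}(q_{k_j})}^2_{h^{k_j}}=\abs{\lambda}^2\abs{u_{k_j}(p_{k_j})}^2_{h^{k_j}}.
\]
Here $\abs{\lambda}$ is a well-defined ratio of norms independent of the frame. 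But by \eqref{e-prop-peak} this would give $\abs{\lambda}^2\geq \tfrac{1}{2}\big/\abs{u_{k_j}(p_{k_j})}^2$ on one hand, and I would also want an \emph{upper} bound on $\abs{u_{k_j}(p_{k_j})}^2$; this comes from the uniform pointwise bound $\abs{w(x)}^2_{h^k}\leq Ck^n$ valid for $w\in\mathcal{H}^0_{b,\leq k\delta}(X,L^k)$ with $\norm{w}=1$ (the estimate \eqref{e-gue150813} cited from \cite[Theorem~2.4]{HL15}), applied to $v_{k_j}/\norm{v_{k_j}}$ — so I first need a lower bound $\norm{v_{k_j}}\geq c>0$, which should follow from tracking the construction in Lemma~\ref{l-gue151031II}. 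Combining, $\abs{u_{k_j}(p_{k_j})}^2\leq Ck_j^n\norm{v_{k_j}}^2$, and then $\abs{\lambda}^2\abs{u_{k_j}(p_{k_j})}^2\leq\tfrac12$ together with $\abs{u_{k_j}(p_{k_j})}^2\geq 1$ forces $\abs{\lambda}^2\leq\tfrac12$, while a symmetric argument (using a peak section in the reverse direction, or the second inequality of \eqref{e-prop-peak} directly) rules out $\abs{\lambda}$ too small: concretely, since $\abs{u_{k_j}(p_{k_j})}^2\geq 1$ and $\abs{u_{k_j}(q_{k_j})}^2=\abs{\lambda}^2\abs{u_{k_j}(p_{k_j})}^2\leq\tfrac12$, we already get a contradiction as soon as $\abs{u_{k_j}(p_{k_j})}^2\geq 1$ is consistent — the point is that $\abs{\lambda}^2\le 1/2$ together with applying the \emph{same} reasoning to a peak section peaking at $q_{k_j}$ instead of $p_{k_j}$ (swap the roles of $p,q$ in Lemma~\ref{l-gue151031II}) yields $\abs{\lambda}^{-2}\leq 1/2$, i.e. $\abs{\lambda}^2\geq 2$, which is incompatible with $\abs{\lambda}^2\leq 1/2$. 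This contradiction disposes of the case $p\neq q$.

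\medskip
\emph{Case $p=q$.} Here $p_{k_j}$ and $q_{k_j}$ both converge to the same point $p$, so Lemma~\ref{l-gue151031II} does not apply directly; instead I would use the immersion statement Theorem~\ref{t-gue131019} together with a quantitative version of its proof. Work in a canonical coordinate patch $(D,x)$ around $p$ with a local frame $s$ normalized as in \eqref{e-geusw13623}, and consider the local expression $\hat\Phi_{k,\delta}(x)=\big(\Td f_2/\Td f_1,\ldots,\Td f_{d_k}/\Td f_1\big)(x)$ (after arranging $\abs{(e^{-k\Phi}\Td f_1)(p_{k_j})}^2\geq c_0$ via Lemma~\ref{l-gue131007I}, which holds uniformly near $p$ for $k$ large). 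From the proof of Theorem~\ref{t-gue131019}, using the sections $g^1_k,\ldots,g^n_k$ of Lemma~\ref{l-gue151009}, one extracts $n$ of the component functions whose differentials at $p_{k_j}$, after the anisotropic rescaling $F^*_k$, are uniformly bounded below — i.e. the rescaled map $y\mapsto \hat\Phi_{k,\delta}(F^*_{k}\,y + x(p_{k_j}))$ has, on a fixed ball $\abs{y}\leq r_0$ and for $k$ large, Jacobian bounded below and $C^2$-norm bounded above, \emph{uniformly in the base point $p_{k_j}$}. This uniform local diffeomorphism property (an effective inverse function theorem with constants independent of $k$ and of the center) shows $\hat\Phi_{k,\delta}$ is injective on the set $\set{x:\abs{x-x(p_{k_j})}\leq r_0/\sqrt{k}}$ in the rescaled sense. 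But in the original metric this is a ball of radius $\sim r_0\sqrt{k}\to\infty$ after rescaling, equivalently: if $p_{k_j}$ and $q_{k_j}$ both lie in a small fixed coordinate ball around $p$ and are distinct, then for $k$ large their images are distinct. More carefully: $p_{k_j},q_{k_j}\to p$ means that eventually both lie in a fixed canonical patch $D$, and I claim injectivity of $\Phi_{k,\delta}$ on $D_0\Subset D$ for $k$ large, which is exactly the content of combining Theorem~\ref{t-gue131019} with the uniform-in-base-point estimates from Lemma~\ref{l-gue151009}; hence $\Phi_{k_j,\delta}(p_{k_j})\neq\Phi_{k_j,\delta}(q_{k_j})$ for $j$ large, a contradiction.

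\medskip
\emph{Main obstacle.} The delicate point is the $p=q$ case: one must upgrade the pointwise immersion statement (Theorem~\ref{t-gue131019}) to \emph{local injectivity on a fixed neighborhood with constants uniform in the base point}. This requires checking that all the estimates in Lemma~\ref{l-gue151009} — the lower bounds $\abs{\partial_{z_j}(e^{-k\Phi}\Td g^j_k)(0)}\geq C_0$ and the smallness of the off-diagonal derivatives — hold uniformly as the center $p$ varies over $X$ (which the lemma asserts: "$K_0>0$ independent of the point $p$", "$C_0$, $\epsilon_k$ independent of $p$"), and then running an effective inverse/implicit function theorem with these uniform bounds to conclude injectivity on a ball whose rescaled radius stays bounded below. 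In the $p\neq q$ case the only subtlety is bookkeeping the frame identifications so that $\abs{\lambda}$ is genuinely well-defined, and securing the uniform lower bound $\norm{v_k}\geq c>0$ from the construction in Lemma~\ref{l-gue151031II}. Once these are in place the contradictions close and the theorem follows.
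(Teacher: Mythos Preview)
Your treatment of the case $p\neq q$ is essentially the paper's argument (apply the peak section lemma once in each direction to force $\abs{\lambda}^2\leq\tfrac12$ and $\abs{\lambda}^{-2}\leq\tfrac12$), though you add unnecessary worry about $\norm{v_{k_j}}$: the inequalities in Lemma~\ref{l-gue151031II} are already stated for pointwise $h^k$-norms, so no normalization is needed.

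The case $p=q$, however, has a genuine gap. The uniform-in-basepoint immersion estimates of Lemma~\ref{l-gue151009} say that after the anisotropic rescaling $F^*_k$ the map has Jacobian bounded below and $C^2$-norm bounded above; an effective inverse function theorem then yields injectivity on a rescaled ball $\abs{y}\leq r_0$, which in original coordinates is $\abs{z-z(p_{k_j})}\leq r_0/\sqrt{k}$, $\abs{x_{2n-1}-x_{2n-1}(p_{k_j})}\leq r_0/k$. This is a \emph{shrinking} neighbourhood, not a fixed $D_0\Subset D$. Knowing only $p_{k_j},q_{k_j}\to p$ gives you no control on $\sqrt{k_j}\,\abs{p_{k_j}-q_{k_j}}$: it could stay bounded away from zero or even tend to infinity, and in that regime your inverse-function argument says nothing. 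Your sentence ``I claim injectivity of $\Phi_{k,\delta}$ on $D_0\Subset D$ for $k$ large, which is exactly the content of combining Theorem~\ref{t-gue131019} with the uniform-in-base-point estimates'' is therefore incorrect.

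The paper closes this gap by working directly with the kernel $P_{k,\delta,s}(x_k,y_k)$ and splitting the $p=q$ case into three regimes according to the sizes of $k\abs{z^k-w^k}^2$ and $k(y^k_{2n-1}-x^k_{2n-1}+\widehat{\widehat{\alpha_k}})$. When either is bounded away from zero, the off-diagonal phase decay ${\rm Im\,}\varphi\geq c\abs{z-w}^2$ (resp.\ an integration by parts in $t$) forces $k^{-n}\abs{P_{k,\delta,s}(x_k,y_k)}$ strictly below $k^{-n}P_{k,\delta,s}(y_k,y_k)$, contradicting proportionality with $\abs{\lambda_k}\geq1$. Only when both quantities tend to zero --- precisely the regime your argument would cover --- does the paper use a second-derivative argument on $H_k(u)=\abs{P_{k,\delta,s}(ux_k+(1-u)y_k,y_k)}^2/\big(P_{k,\delta,s}(\cdot,\cdot)P_{k,\delta,s}(y_k,y_k)\big)$ to reach a contradiction. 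To fix your proof you would need, in addition to the local inverse-function step, a quantitative peak-section or off-diagonal kernel estimate handling the intermediate distance scales $k^{-1/2}\lesssim\abs{p_{k_j}-q_{k_j}}\to0$.
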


\begin{proof}
We assume that the claim of the theorem is not true.
We can find $x_{k_j}, y_{k_j}\in X$, $x_{k_j}\neq y_{k_j}$, $0<k_1<k_2<\ldots$,
$\lim_{j\To\infty}k_j=\infty$, such that $\Phi_{k_j,\delta}(x_{k_j})=\Phi_{k_j,\delta}(y_{k_j})$,
for each $j$. We may suppose that there are $x_{k}, y_k\in X$, $x_k\neq y_{k}$,
such that $\Phi_{k,\delta}(x_k)=\Phi_{k,\delta}(y_k)$, for each $k$.
We may assume that $x_k\To p\in X$, $y_k\To q\in X$, as $k\To\infty$.
If $p\neq q$. From Lemma~\ref{l-gue151031II},
we can find $u_k=F_{k,\delta}v_k$,
$v_k=F_{k,\delta}g_k$, $v_k, g_k\in\mathcal{H}^0_{b,\leq k\delta}(X,L^k)$
such that for $k$ large, we have
\begin{equation}\label{e-gue131103IV}
\abs{u_k(x_k)}^2_{h^k}\geq 1,\ \ \abs{u_k(y_k)}^2_{h^k}\leq\frac{1}{2},
\end{equation}
and
\begin{equation}\label{e-gue131103V}
\abs{v_k(y_k)}^2_{h^k}\geq 1,\ \ \abs{v_k(x_k)}^2_{h^k}\leq\frac{1}{2}.
\end{equation}
Now,  $\Phi_{k,\delta}(x_k)=\Phi_{k,\delta}(y_k)$ implies that
\[\abs{u_k(x_k)}^2_{h^k}=r_k\abs{u_k(y_k)}^2_{h^k},\ \ \abs{v_k(x_k)}^2_{h^k}=
r_k\abs{v_k(y_k)}^2_{h^k},\]
where $r_k\in\Real_+$, for each $k$. \eqref{e-gue131103IV} implies that $r_k\geq 2$, for $k$ large. But  \eqref{e-gue131103V} implies that $r_k\leq\frac{1}{2}$, for $k$ large. We get a contradiction. Thus, we must have $p=q$.

Let $\{f_j\}_{j=1}^{d_k}$ be an orthonormal basis of $\mathcal{H}^0_{b, \leq k\delta}(X, L^k)$. Let $s$ be a local rigid CR frame of $L$ on a BRT trivialization $(D,(z,\theta),\phi)$, $p\in D$, $\abs{s}^2_{h}=e^{-2\Phi}$. Write $F_{k,\delta}f_j=s^k\otimes\widetilde f_j, j=1, \ldots, d_k.$ We assume that
\begin{equation}
e^{-k\Phi(x_k)}\widetilde f_j(x_k)=\lambda_k e^{-k\Phi(y_k)}\widetilde f_j(y_k), \lambda_k\in\mathbb C, |\lambda_k|\geq 1.
\end{equation}
This implies that
\begin{equation}\label{c2}
P_{k, \delta, s}(x_k, y_k)=\lambda_k P_{k, \delta, s}(y_k, y_k), \lambda_k\in\mathbb C,   |\lambda_k|\geq 1.
\end{equation}
We will show that (\ref{c2}) is impossible.  Write $x_k=(z^k, x_{2n-1}^k)=(x^k_1,\ldots,x^k_{2n-1})$, $y_k=(w^k, y_{2n-1}^k)=(y^k_1,\ldots,y^k_{2n-1})$ and
\[z^k=(z_1^k, \ldots, z_{n-1}^k),\ \  w^k=(w_1^k, \ldots, w_{n-1}^k).\]
Without loss of generality, we assume that
\begin{equation*}
\lim\limits_{k\rightarrow\infty }k|z^k-w^k|^2=M, M\in [0, \infty].
\end{equation*}
Case I: $M\in (0, \infty].$ First we assume that $M=\infty$. From (\ref{c1}) we have
\begin{equation}\label{d6}
\begin{split}
\limsup\limits_{k\rightarrow\infty}k^{-n}|P_{k, \delta, s}(x_k, y_k)|&\leq\limsup\limits_{k\rightarrow\infty}\int e^{-k{\rm Im}\varphi(x_k, y_k, t)}\left|g_0(x_k, y_k, t)\right|dt.
\end{split}
\end{equation}
Combining with (\ref{d6}) and the fact ${\rm Im}\varphi(x_y, y_k, s)\geq c|z^k-w^k|^2$ 
in (\ref{e-gue150807b}) we have
$$\limsup\limits_{k\rightarrow\infty}k^{-n}|P_{k, \delta, s}(x_k, y_k)|=0.$$ 
This is a contradiction with 
$\lim\limits_{k\rightarrow\infty}k^{-n}P_{k, \delta, s}(y_k, y_k)=
\int g_0(p, p, t)dt\neq 0$ and the assumption (\ref{c2}). 
Thus we have $M<\infty$. From (\ref{c1}) we have
\begin{equation}
\lim_{k\rightarrow\infty}k^{-n}|P_{k, \delta, s}(x_k, y_k)|\leq e^{-c M}\int g_0(p, p, t)dt
\end{equation}
for some positive constant $c$. On the other hand 
$\lim\limits_{k\rightarrow\infty} k^{-n}|P_{k, \delta, s}(y_k, y_k)|=
\int g_0(p, p, t)dt.$ This is a contradiction with (\ref{c2}). Thus we have $M=0$, that is
\begin{equation}
\lim_{k\rightarrow\infty}k|z^k-w^k|^2=0.
\end{equation}
Set
\begin{equation}
\widehat{\widehat{\alpha_k}}=\sqrt{-1}\sum_{j=1}^{n-1}
\left[\frac{\partial\phi(z^k)}{\partial\overline z_j}(\overline z_j^k-\overline w_j^k)-
\frac{\partial\phi(z^k)}{\partial z_j}(z_j^k-w_j^k)\right]\in \mathbb R.
\end{equation}
Recall that
\[\omega_0(x)=-dx_{2n-1}+i\sum^{n-1}_{j=1}
\Big(\frac{\pr\phi}{\pr\ol z_j}(z)d\ol z_j-\frac{\pr\phi}{\pr z_j}(z)dz_j\Big),\ \ x=(z,\theta).\]
Without loss of generality, we assume that
\begin{equation}
\lim_{k\rightarrow\infty}k(y^k_{2n-1}-x_{2n-1}^k+\widehat{\widehat{\alpha_k}})=N\in [0, \infty].
\end{equation}

Case II:
\begin{equation}
\lim_{k\rightarrow\infty}k|z^k-w^k|^2=0; 
\lim_{k\rightarrow\infty}k(y^k_{2n-1}-x_{2n-1}^k+\widehat{\widehat{\alpha_k}})=N\in (0, \infty].
\end{equation}
First we assume $N=\infty$. From (\ref{c1}) we have
\begin{equation}\label{c4}
k^{-n}P_{k, \delta, s}(x_k, y_k)=k^{-n}\int e^{ik\varphi(x_k, y_k, t)}g(x_k, y_k, t, k)dt+r_k,
\end{equation}
where $|r_k|=O(k^{-\infty})$. By the second property in (\ref{e-gue150807b}) we have
\begin{equation}\label{c3}
\begin{split}
\varphi(x, y, t)=&t(y_{2n-1}-x_{2n-1})+
ti\sum_{j=1}^{n-1}\left[\frac{\partial\phi}{\partial\overline z_j}(z)(\overline z_j-\overline w_j)
-\frac{\partial\phi}{\partial z_j}(z)(z_j-w_j)\right]\\
&+i\sum_{j=1}^{n-1}\left[\frac{\partial\Phi}{\partial\overline z_j}(z)
(\overline z_j-\overline w_j)-\frac{\partial\Phi}{\partial z_j}(z)(z_j-w_j)\right]+O(|x-y|^2).
\end{split}
\end{equation}
Note that
\begin{equation}\label{d7}
k|x_k-y_k|^2\lesssim k|z^k-w^k|^2+k|y_{2n-1}^k-x_{2n-1}^k+
\widehat{\widehat{\alpha_k}}|^2\lesssim k|z^k-w^k|^2+k|y_{2n-1}^k-x_{2n-1}^k+
\widehat{\widehat{\alpha_k}}|\epsilon_k,
\end{equation}
where $\epsilon_k\rightarrow0$. From (\ref{c3}), \eqref{d7} and the assumption $N=\infty$ we have
\begin{equation}\label{d2}
\lim_{k\rightarrow\infty}k\frac{\partial\varphi(x_k, y_k, t)}{\partial t}=\infty.
\end{equation}
Substituting (\ref{c3}) to (\ref{c4}), by (\ref{d2}) and integrating by parts with respect to $t$ we have
\begin{equation}
\limsup\limits_{k\rightarrow\infty}k^{-n}|P_{k, \delta, s}(x_k, y_k)|=0.
\end{equation}
This is a contradiction with (\ref{c2}) since 
$\lim_{k\To\infty}k^{-n}P_{k,\delta,s}(y_k,y_k)=
\int g_0(p,p,t)dt\neq0$. Second, we assume $N<\infty$. 
Since $\lim\limits_{k\rightarrow\infty}k|z^k-w^k|^2=0$ and 
$\lim\limits_{k\rightarrow\infty}k(y_{2n-1}^k-x_{2n-1}^k+
\widehat{\widehat{\alpha_k}})=N<\infty$, by (\ref{d7}) 
we have that $\lim\limits_{k\rightarrow\infty}k|x_k-y_k|^2=0.$  
Substituting (\ref{c3}) to (\ref{c4}) we have that
\begin{equation}\label{c5}
\begin{split}
\limsup_{k\rightarrow\infty}k^{-n}|P_{k, \delta, s}(x_k, y_k)&\leq 
\limsup_{k\rightarrow\infty}k^{-n}\left|\int e^{ik\left[t(y_{2n-1}-x_{2n-1}+
\widehat{\widehat{\alpha_k}})+O(|x_k-y_k|^2)\right]}g(x_k, y_k, t, k)dt\right|\\
&\leq \left|\int e^{iNt}g_0(p, p, t)dt\right|.
\end{split}
\end{equation}
Since $\left|\int e^{iNt}g_0(p, p, t)dt\right|<\int g_0(p, p, t)dt=
\lim_{k\To\infty}k^{-n}P_{k,\delta,s}(y_k,y_k)$, combining this with 
(\ref{c2}) and (\ref{c5}) we get a contradiction. Thus we have $N=0$.

Case III:
\begin{equation}\label{d4}
\lim_{k\rightarrow\infty}k|z^k-w^k|^2=0; 
\lim_{k\rightarrow\infty}k(y^k_{2n-1}-x_{2n-1}^k+\widehat{\widehat{\alpha_k}})=0.
\end{equation}
Define $$A_k(u)=\left|P_{k, \delta, s}(ux_k+(1-u)y_k, y_k)\right|^2, B_k(u)=
P_{k, \delta, s}(ux_k+(1-u)y_k,ux_k+(1-u)y_k)\cdot P_{k, \delta, s}(y_k,y_k).$$
Set $H_k(u)=\frac{A_k(u)}{B_k(u)}$. By Schwartz inequality, we have 
$0\leq H_k(u)\leq 1.$ Since $H_k(0)=H_k(1)=1$, then there exists a 
$u_k\in(0, 1)$ such that $H_k^{''}(u_k)\geq0.$ By direct calculation,
\begin{equation}
H_k^{''}(u_k)=\frac{A_k^{''}(u_k)}{B_k(u_k)}-2\frac{A_k^{'}(u_k)B_k^{'}(u_k)}{B_k^2(u_k)}
-\frac{A_k(u_k)B_k^{''}(u_k)}{B_k^2(u_k)}+2\frac{A_k(u_k)B_k^{'2}(u_k)}{B_k^3(u_k)}.
\end{equation}
Write $\alpha_k(u)=P_{k, \delta, s}(ux_k+(1-u)y_k, y_k)$. 
Then $A_k(u)=|\alpha_k(u)|^2$, $A_k^\prime(u)=\alpha_k^\prime(u)\overline{\alpha_k(u)}+
\alpha_k(u)\overline{\alpha_k^\prime(u)}$ and 
\begin{equation}A_k^{\prime\prime}(u_k)=\alpha_k^{\prime\prime}(u_k)\overline{\alpha_k(u_k)}+
2|\alpha_k^\prime(u_k)|^2+\alpha_k(u_k)\overline{\alpha_k^{\prime\prime}(u_k)}.\end{equation}
By Theorem \ref{t-gue150811}, we have
\begin{equation}
\alpha_k(u)=\int e^{ik\varphi(ux_k+(1-u)y_k, y_k, t)}g(ux_k+(1-u)y_k, y_k, t, k)dt+\gamma_k(u),
\end{equation}
where $\gamma_k(u)=O(k^{-\infty}).$ Write $\beta_k(u)=
\int e^{ik\varphi(ux_k+(1-u)y_k, y_k, t)}g(ux_k+(1-u)y_k, y_k, t, k)dt$ and then
\begin{equation}\label{d111}
\begin{split}
A_k^{\prime\prime}(u_k)&=2|\beta_k^\prime(u_k)|^2+\beta_k^{\prime\prime}(u_k)
\overline{\beta_k(u_k)}+\overline{\beta^{\prime\prime}(u_k)}\beta_k(u_k)+2\beta^\prime_k(u_k)
\overline{\gamma_k^\prime(u_k)}+2\gamma_k^\prime(u_k)\overline{\beta_k^\prime(u_k)}\\
&\quad+\beta_k^{\prime\prime}(u_k)\overline{\gamma_k(u_k)}+\gamma_k(u_k)\overline{\beta_k^{\prime\prime}
(u_k)}+\overline{\beta_k(u_k)}\gamma_k^{\prime\prime}(u_k)+\beta_k(u_k)
\overline{\gamma_k^{\prime\prime}(u_k)}\\
&\quad+\gamma_k^{\prime\prime}(u_k)\overline{\gamma_k(u_k)}+
\gamma_k(u_k)\overline{\gamma^{\prime\prime}(u_k)}
+2|\gamma_k^\prime(u_k)|^2.
\end{split}
\end{equation}
Set
\begin{equation}
\widehat\alpha_k(u)=i\sum_{j=1}^{n-1}\left[\frac{\partial\phi}
{\partial\overline z_j}\Big|_{w^k+u(z^k-w^k)}\cdot(\overline z_j^k-
\overline w_j^k)-\frac{\partial\phi}{\partial z_j}\Big|_{w^k+u(z^k-w^k)}\cdot(z_j^k-w_j^k)\right].
\end{equation}
By the mean value theorem,
\begin{equation}\label{d5}
|k(y_{2n-1}^k-x_{2n-1}^k+\widehat{\alpha}_k(u_k))-k(y^k_{2n-1}-x_{2n-1}^k+\widehat{\widehat{\alpha_k}})|
=k|\widehat{\widehat{\alpha_k}}-\widehat{\alpha}_k(u_k)|\lesssim k|z^k-w^k|^2.
\end{equation}
Then (\ref{d4}) and (\ref{d5}) implies that
\begin{equation}
\lim\limits_{k\rightarrow\infty}k|y_{2n-1}^k-x_{2n-1}^k+\widehat{\alpha}_k(u_k)|=0.
\end{equation}
By direct calculation we have that
\begin{equation}\label{d8}
\begin{split}
&2|\beta_k^\prime(u_k)|^2+\beta_k^{\prime\prime}(u_k)\overline{\beta_k(u_k)}
+\overline{\beta_k^{\prime\prime}(u_k)}\beta_k(u_k)\\
&=2k^{2n+2}\left[\left|\int tg_0(p, p, t)dt\right|^2-
\int g_0(p, p, t)t^2dt\cdot\int g_0(p, p, t)dt\right](y_{2n-1}^k-x_{2n-1}^k+\widehat\alpha_k(u_k))^2\\
&-2k^{2n+1}\int\left[\sum_{j,l=1}^{2n-2}
\frac{\partial^2{\rm Im}\varphi(p, p, t)}{\partial x_j\partial x_l}(x^k_j-y^k_j)(x_l^k-y_l^k)\right]
g_0(p, p, t)dt\\
&+o(k^{2n})O(k|z^k-w^k|^2+k^2|y_{2n-1}^k-x_{2n-1}^k+\widehat{\alpha}_k(u_k)|^2).
\end{split}
\end{equation}
By \eqref{e-guew13627} there exists $c>0$ such that for $\delta$ sufficiently small the following holds,
\begin{equation}\label{d9}
\int\left[\sum_{j,l=1}^{2n-2}\frac{\partial^2{\rm Im}\varphi(p, p, t)}{\partial x_j\partial x_l}
(x^k_j-y^k_j)(x_l^k-y_l^k)\right]g_0(p, p, t)dt\geq c|z^k-w^k|^2.
\end{equation}
By H\"older's inequality, $\left|\int tg_0(p, p, t)dt\right|^2<\int t^2g_0(p, p, t)dt
\cdot\int g_0(p, p, t)dt$, so by combining (\ref{d8}) and (\ref{d9}) there exists $c_1>0$
such that
\begin{equation}\label{c7}
\begin{split}
\limsup_{k\rightarrow\infty}k^{-2n}&\left[k|z^k-w^k|^2+k^2(y_{2n-1}^k-x_{2n-1}^k
+\widehat\alpha_k(u_k))^2\right]^{-1}\times\\
&\left[2|\beta_k^\prime(u_k)|^2+\beta_k^{\prime\prime}(u_k)\overline{\beta_k(u_k)}
+\overline{\beta_k^{\prime\prime}(u_k)}\beta_k(u_k)\right]<-c_1<0.
\end{split}
\end{equation}
By direct calculation we have that
\begin{equation}\label{c8}
\begin{split}
&\limsup_{k\rightarrow\infty}k^{-2n}\left[k|z^k-w^k|^2+k^2(y_{2n-1}^k-x_{2n-1}^k
+\widehat\alpha_k(u_k))^2\right]^{-1}\times C_k=0,
\end{split}
\end{equation}
where
\begin{equation*}\label{c6}
\begin{split}
C_k=&2\beta^\prime_k(u_k)
\overline{\gamma_k^\prime(u_k)}+2\gamma_k^\prime(u_k)\overline{\beta_k^\prime(u_k)}
+\beta_k^{\prime\prime}(u_k)\overline{\gamma_k(u_k)}+\gamma_k(u_k)\overline{\beta_k^{\prime\prime}
(u_k)}+\\
&\overline{\beta_k(u_k)}\gamma_k^{\prime\prime}(u_k)+\beta_k(u_k)
\overline{\gamma_k^{\prime\prime}(u_k)}
+\gamma_k^{\prime\prime}(u_k)\overline{\gamma_k(u_k)}+\gamma_k(u_k)\overline{\gamma^{\prime\prime}(u_k)}
+2|\gamma_k^\prime(u_k)|^2.
\end{split}
\end{equation*}
Combining (\ref{c7}), (\ref{c8}) and (\ref{d111}) there exists $c_2>0$ such that
\begin{equation}\label{d1}
\limsup_{k\rightarrow\infty}\left[k|z^k-w^k|^2+k^2(y_{2n-1}^k-x_{2n-1}^k
+\widehat\alpha_k(u_k))^2\right]^{-1}\frac{A_k^{\prime\prime}(u_k)}{B_k(u_k)}<-c_2<0.
\end{equation}
It is straightforward to see that
\begin{equation}\label{d2b}
\begin{split}
&\limsup_{k\rightarrow\infty}\left[k|z^k-w^k|^2+k^2(y_{2n-1}^k-x_{2n-1}^k
+\widehat\alpha_k(u_k))^2\right]^{-1}\times\\
& \left\{2\frac{|A_k^{\prime}(u_k)|\cdot|B_k^{\prime}(u_k)|}{B_k^2(u_k)}
+\frac{|A_k(u_k)|\cdot|B_k^{\prime\prime}(u_k)|}{B_k^2(u_k)}+2\frac{|A_k(u_k)|\cdot|B_k^{\prime 2}(u_k)|}{B_k^3(u_k)}\right\}=0.
\end{split}
\end{equation}
From (\ref{d1}) and (\ref{d2b}) we have
\begin{equation}
\limsup_{k\rightarrow\infty}\left[k|z^k-w^k|^2+k^2(y_{2n-1}^k-x_{2n-1}^k
+\widehat\alpha_k(u_k))^2\right]^{-1}H_k^{\prime\prime}(u_k)<0.
\end{equation}
This is a contradiction with $H^{\prime\prime}_k(u_k)\geq 0.$
\end{proof}
Since $X$ is compact Theorems \ref{t-gue131019} and \ref{t-gue151031} 
implies the modified Kodaira map  $\Phi_{k, \delta}$ defined in (\ref{e-gue131019II}) 
is an embedding. For different $m_1, m_2\in\mathbb Z$, 
$\mathcal H^0_{b, m_1}(X, L^k)\perp \mathcal H^0_{b, m_2}(X, L^k)$, 
thus we can choose an orthonormal basis $\{f_j\}_{j=1}^{d_k}$ of 
$\mathcal H^0_{b, \leq k\delta}(X, L^k)$ such that 
$f_j\in \mathcal H^0_{b, m_j}(X, L^k)$ with $m_j\in\mathbb Z$ and 
$|m_j|\leq k\delta$ for each $1\leq j\leq d_k$. 
Then $F_{k, \delta}f_j\in \mathcal H^0_{b, m_j}(X)$ for each $j$. 
For any $p\in X$, from the argument in the proof of  \cite[Lemma 1.20]{HL15}, 
we can find a local trivialization $W$  which is an $S^1$ invariant neighborhood of 
$p$ and local trivializing rigid CR section $s$ of $L$ on $W$. 
Then on $W$, $F_{k, \delta}f_j=s^k\otimes\widetilde f_j $ with $\widetilde f_j\in C^\infty(W), 
1\leq j\leq d_k$. Since $F_{k, \delta}f_j\in \mathcal H^0_{b, m_j}(X, L^k)$, 
we have $T\widetilde f_j=im_j \widetilde f_j$. Then for any $\theta\in[0, 2\pi)$ 
we have $\widetilde f_j(e^{i\theta}p)=e^{im_j\theta}\widetilde f_j(p)$. 
Thus, $\Phi_{k, \delta}(e^{i\theta}p)=
[\widetilde f_1(e^{i\theta}p), \ldots, \widetilde f_{d_k}(e^{i\theta}p)]=
[e^{im_1\theta}\widetilde f_1(p), \ldots, e^{im_{d_k}\theta}\widetilde f_{d_k}(p)]=
e^{i\theta}\Phi_{k, \delta}(p)$. 
We get the conclusion of Theorem \ref{t-gue150807I}.

Corollaries \ref{C:Lf} and \ref{t-gue151230I} are immediate consequences of 
Theorem \ref{t-gue150807I}. We close with an application of Corollary \ref{t-gue151230I}.

\begin{ex}\label{ex_t-gue150807I}
Let $(X, T^{1,0}X)$ be a compact CR manifold 
of dimension $3$ with a transversal CR locally free $S^1$-action. 
Assume that $X$ admits a rigid positive CR line bundle $L$. 
For example, if $X$ is strongly pseudoconvex, there is a rigid positive CR 
line bundle over $X$. 
Take $Z\in C^\infty(X,T^{1,0}X)$ such that $Z_x$ is a basis for $T^{1,0}_xX$, 
for every $x\in X$. Let $h$ be a distribution on $X$ with $Th=0$ and $Zh$ smooth 
(note that it is possible that there is a non-smooth function $h$ such that $Zh$ is smooth). 
Hence, $Zh\in C^\infty(X)$. 
Consider $\hat T^{1,0}X:={\rm span\,}\langle Z+(Zh)T\rangle$. 
Then, $(X, \hat T^{1,0}X)$ is a compact CR manifold of dimension $3$ 
with a transversal CR locally free $S^1$-action.
Moreover, $L$ is still a rigid positive CR line bundle over $(X, \hat T^{1,0}X)$. 
To see this, let $s$ be a rigid CR frame with respect to $T^{1, 0}X$ 
and $|s|^2=e^{-2\phi}.$ Then $s$ is still a rigid CR frame with respect to 
$\hat T^{1, 0}X.$ Let $\hat{\overline\partial}_b$ be the tangential Cauchy-Riemann 
operator with respect to $\hat T^{1, 0}X$ and $\hat{\partial}_b$ its conjugate.
Then the curvature of $L$ is given by
$\hat R^L=2\hat\partial_b\hat{\overline\partial}_b\phi.$ 
Since $\hat{\overline\partial}_b\phi=(\overline Z+\overline Zh T)\phi=
\overline Z\phi d\overline z$ we have 
$\hat\partial_b\hat{\overline\partial}_b\phi=
Z\overline Z\phi\, dz\wedge d\overline z=
\partial_b\overline\partial_b\phi>0$.

From Theorem~\ref{t-gue150807I}, we deduce that there exists smooth CR 
embeddings $\Phi_{k,\delta}$ of $(X,\hat T^{1,0}X)$ in $\mathbb C\mathbb P^{d_k-1}$
which are $S^1$-equivariant with respect to weighted diagonal actions. 
\end{ex}

\end{document}